\begin{document}

\title{Information Newton's flow: second-order optimization method in probability space}

\author{\name Yifei Wang \email wangyf18@stanford.edu \\
       \addr Department of Electrical Engineering\\
       Stanford University\\
       Stanford, CA 94305-9505, USA
       \AND
       \name Wuchen Li \email wcli@math.ucla.edu \\
       \addr Department of Mathematics\\
       University of California\\
       Los Angeles, CA 90095-1555, USA}

\editor{}

\maketitle

\begin{abstract}
We introduce a framework for Newton's flows in probability space with information metrics, named information Newton's flows. Here two information metrics are considered, including both the Fisher-Rao metric and the Wasserstein-2 metric. A known fact is that overdamped Langevin dynamics correspond to Wasserstein gradient flows of Kullback-Leibler (KL) divergence. Extending this fact to Wasserstein Newton's flows, we derive Newton's Langevin dynamics. We provide examples of Newton's Langevin dynamics in both one-dimensional space and Gaussian families. 
For the numerical implementation, we design sampling efficient variational methods in affine models and reproducing kernel Hilbert space (RKHS) to approximate Wasserstein Newton's directions. We also establish convergence results of the proposed information Newton's method with approximated directions. Several numerical examples from Bayesian sampling problems are shown to demonstrate the effectiveness of the proposed method.
\end{abstract}

\begin{keywords}
  Optimal transport; Information geometry; Langvien dynamics; Information Newton's flow; Newton's Langvien dynamics.
\end{keywords}

\section{Introduction}
Optimization problems in probability space are of great interest in inverse problems, information science, physics, and scientific computing, with applications in machine learning \citep{igaia, ivabp, Liu2017_steina, ngwei, tiot}. One typical problem here comes from Bayesian inference, which provides an optimal probability formulation for learning models from observed data. Given a prior distribution, the problem is to generate samples from a (target) posterior distribution \citep{ivabp}. From an optimization perspective, such a problem often refers to minimizing an objective function, such as the Kullback-Leibler (KL) divergence, in the probability space. The update relates to finding a sampling representation for the evolution of the probability. 

In practice, one often needs to transfer probability optimization problems into sampling-based formulations, and then design efficient updates in the form of samples. Here first-order methods, such as gradient descent methods, play essential roles. 
We notice that gradient directions for samples rely on the metric over the probability space, which reflects the change of objective/loss functions.
In practice, there are several important metrics, often named information metrics from information geometry and optimal transport, including the Fisher-Rao metric \citep{ngwei} and the Wasserstein-$2$ metric (in short, Wasserstein metric) \citep{tdmac,tgode}. In literature, along with a given information metric, the probability space can be viewed as a Riemannian manifold, named density manifold \citep{tdmac}.

For the Fisher-Rao metric, its gradient flow, known as birth-death dynamics, are important in modeling population games and designing evolutionary dynamics \citep{igaia}. It is also important for optimization problems in discrete probability 
\citep{cowig} and machine learning \citep{igoaa}. Recently, the Fisher-Rao gradient has also been applied for accelerating Bayesian sampling problems in continuous sample space \citep{alswb}. The Fisher-Rao gradient direction also inspires the design of learning algorithms for probability models. Several optimization methods in machine learning approximate the Fisher-Rao gradient direction, including the Kronecker-factored Approximate Curvature (K-FAC) \citep{onnwk} method and adaptive estimates of lower-order moments (Adam) method \citep{adam}. 

For the Wasserstein metric, its gradient direction deeply connects with stochastic differential equations and the associated Markov chain Monte Carlo methods (MCMC). An important fact is that the Wasserstein gradient of KL divergence forms the Kolmogorov forward generator of overdamped Langevin dynamics \citep{tvfot}. Hence, many MCMC methods can be viewed as Wasserstein gradient descent methods. In recent years, there are also several generalized Wasserstein metrics, such as Stein metric \citep{SVGD, Liu2017_steina}, Hessian transport (mobility) metrics \citep{CARRILLO20101273, Dolbeault2009, Li2019} and Kalman-Wasserstein metric \citep{ildgs}. These metrics introduce various first-order methods with sampling efficient properties. For instance, the Stein variational gradient descent \citep[SVGD]{SVGD} introduces a kernelized interacting Langevin dynamics. The Kalman-Wasserstein metric introduces a particular mean-field interacting Langevin dynamics \citep{ildgs}, known as ensemble Kalman sampling. On the other hand, many approaches design fast algorithms on modified Langevin dynamics. These methods can also be viewed and analyzed by the modified Wasserstein gradient descent, see details in \citep{itaao,sqnmc,DBLP:journals/corr/abs-1907-12546}. By viewing sampling as optimization problems in the probability space, many efficient sampling algorithms are inspired by classical optimization methods. E.g., \citet{lmcaj,plarc} apply the operator splitting technique to improve the unadjusted Langevin algorithm. \citet{afomo,affpd,aigf} study Nesterov's accelerated gradient methods in probability space.

In optimization, the Newton's method is a fundamental second-order method to accelerate optimization computations. For optimization problems in probability space, several natural questions arise: \textit{Can we systematically design Newton's methods to accelerate sampling related optimization problems? What is the Newton's flow in probability space under information metrics? Focusing on the Wasserstein metric, can we extend the relation between Wasserstein gradient flow of KL divergence and Langevin dynamics? In other words, what is the Wasserstein Newton's flow of KL divergence and which Langevin dynamics does it corresponds to?} 

In this paper, following \citep{gopsv, aigf}, we complete these questions. We derive Newton's flows in probability space with general information metrics. By studying these Newton's flows, we provide the convergence analysis.
Focusing on Wasserstein Newton's flows of KL divergence, we derive several analytical examples in one-dimensional space and Gaussian families. 
Besides, we design two algorithms as particle implementations of Wasserstein Newton's flows in high dimensional sample space.
This is to restrict the dual variable (cotangent vector) associated with Newton's direction into either finite-dimensional affine function space or RKHS. 
A hybrid update of Newton's direction and gradient direction is also introduced. For the concreteness of presentation, we demonstrate the Wasserstein Newton's flow of KL divergence in Theorem \ref{thm:wnf_kl}. 
\begin{theorem}[Wasserstein Newton's flow of KL divergence]
\label{thm:wnf_kl}
For a density $\rho^*(x)\propto \exp(-f(x))$ , where $f$ is a given function, denote the KL divergence between $\rho$ and $\rho^*$ by
\begin{equation}
\mathrm{D}_{\textrm{KL}}(\rho\|\rho^*) = \int\rho \log\frac{\rho}{e^{-f}}dx-\log Z,
\end{equation}
where $Z=\int \exp(-f(x))dx$. Then the Wasserstein Newton's flow of KL divergence follows
\begin{equation}\label{WN}
\p_t\rho_t+\nabla\cdot(\rho_t\nabla\Phi_t^{\operatorname{Newton}}) = 0,
\end{equation}
where $\Phi_t^{\operatorname{Newton}}$ satisfies the following equation
\begin{equation}\label{equ:wnf_kl}
    \nabla^2:(\rho_t\nabla^2\Phi_t)-\nabla\cdot(\rho_t\nabla^2f\nabla \Phi_t) - \nabla\cdot(\rho_t\nabla f)-\Delta \rho_t=0.
\end{equation}
\end{theorem}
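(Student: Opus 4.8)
The plan is to specialize the general construction of a Newton's flow on the density manifold endowed with the Wasserstein metric. Write $E(\rho)=\mathrm{D}_{\textrm{KL}}(\rho\|\rho^*)$ and recall the Otto identification of a tangent vector at $\rho$ with a potential $\Phi$ through the continuity equation $\partial_t\rho=-\nabla\cdot(\rho\nabla\Phi)$, under which $g_W^{\rho}(\Phi_1,\Phi_2)=\int\rho\,\nabla\Phi_1\cdot\nabla\Phi_2\,dx$. By definition, the Wasserstein Newton's direction at $\rho_t$ is the potential $\Phi_t^{\operatorname{Newton}}$ (unique up to a constant) solving $\mathrm{Hess}_W E(\rho_t)[\Phi_t^{\operatorname{Newton}}]=-\mathrm{grad}_W E(\rho_t)$, and the flow is $\partial_t\rho_t=-\nabla\cdot(\rho_t\nabla\Phi_t^{\operatorname{Newton}})$, which is already \eqref{WN}. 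It thus remains to: (i) compute $\mathrm{grad}_W E$; (ii) compute the Hessian bilinear form $\mathrm{Hess}_W E(\rho)(\cdot,\cdot)$; (iii) write Newton's equation weakly against an arbitrary test potential; and (iv) integrate by parts to obtain \eqref{equ:wnf_kl}.

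For (i), the first variation is $\frac{\delta E}{\delta\rho}=\log\rho+f$ (the constants $-\log Z$ and the normalization of $\rho^*$ drop out), so $\langle\mathrm{grad}_W E(\rho),\Phi\rangle_{g_W}=\int\nabla(\log\rho+f)\cdot\rho\nabla\Phi\,dx=\int(\nabla\rho+\rho\nabla f)\cdot\nabla\Phi\,dx$; equivalently $\mathrm{grad}_W E(\rho)=-\Delta\rho-\nabla\cdot(\rho\nabla f)$, the (negative) Fokker--Planck operator, which recovers the known link between the Wasserstein gradient flow of KL and overdamped Langevin dynamics.

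For (ii), I differentiate $E(\rho_t)$ twice along a constant-speed Wasserstein geodesic, i.e.\ along $\partial_t\rho_t+\nabla\cdot(\rho_t\nabla\Phi_t)=0$ with $\partial_t\Phi_t+\tfrac12|\nabla\Phi_t|^2=0$. Splitting $E=\int\rho\log\rho+\int\rho f$ (mod constants), the entropy term yields $\int\rho\,\mathrm{tr}\!\big((\nabla^2\Phi)^2\big)dx$ once the third-order contributions $\int\rho\,\nabla\Phi\cdot\nabla\Delta\Phi$ cancel, and the linear term yields $\int\rho\,\nabla\Phi^{\top}\nabla^2 f\,\nabla\Phi\,dx$ once the term $\int\rho\,\nabla f\cdot\nabla^2\Phi\,\nabla\Phi$ cancels; polarizing gives
\begin{equation*}
\mathrm{Hess}_W E(\rho)(\Phi_1,\Phi_2)=\int\rho\Big(\mathrm{tr}\!\big(\nabla^2\Phi_1\,\nabla^2\Phi_2\big)+\nabla\Phi_1^{\top}\nabla^2 f\,\nabla\Phi_2\Big)\,dx,
\end{equation*}
which is the Wasserstein Hessian of KL (this can also simply be quoted from the preceding development). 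Writing $\Phi_t:=\Phi_t^{\operatorname{Newton}}$, steps (iii)--(iv) then reduce to: for every test potential $\Phi$,
\begin{equation*}
\int\rho_t\Big(\mathrm{tr}\!\big(\nabla^2\Phi_t\,\nabla^2\Phi\big)+\nabla\Phi_t^{\top}\nabla^2 f\,\nabla\Phi\Big)dx=-\int(\nabla\rho_t+\rho_t\nabla f)\cdot\nabla\Phi\,dx,
\end{equation*}
and integrating by parts twice in $\int\rho_t\,\mathrm{tr}(\nabla^2\Phi_t\nabla^2\Phi)$ (once per index) produces $\int\Phi\,\nabla^2:(\rho_t\nabla^2\Phi_t)$, once in $\int\rho_t\,\nabla\Phi_t^{\top}\nabla^2 f\,\nabla\Phi$ produces $-\int\Phi\,\nabla\cdot(\rho_t\nabla^2 f\,\nabla\Phi_t)$, and once on the right produces $\int\Phi\,(\Delta\rho_t+\nabla\cdot(\rho_t\nabla f))$. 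Since $\Phi$ is arbitrary the integrands agree, which is exactly \eqref{equ:wnf_kl}.

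The main obstacle is step (ii): one must carry the geodesic variation carefully so that every third-derivative term cancels and the Hessian comes out as the clean form above — equivalently, identify the Levi--Civita connection of $g_W$ and verify that its Christoffel correction to the (Euclidean-zero) Hessian of the linear functional $\int\rho f$ is precisely $\int\rho\,\nabla\Phi^{\top}\nabla^2 f\,\nabla\Phi$. The remaining points are analytic rather than conceptual: all integrations by parts need sufficient decay of $\rho_t,\Phi_t$ and their derivatives (or a boundaryless domain) for the boundary terms to vanish, and $\Phi_t^{\operatorname{Newton}}$ is only fixed up to an additive constant — the kernel of the elliptic operator on the left of \eqref{equ:wnf_kl} — which is harmless since only $\nabla\Phi_t^{\operatorname{Newton}}$ enters the flow.
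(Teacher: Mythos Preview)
Your proposal is correct and follows essentially the same approach as the paper: compute the Wasserstein gradient of KL as the Fokker--Planck generator, identify the Wasserstein Hessian bilinear form $\int\rho(\mathrm{tr}((\nabla^2\Phi)^2)+\nabla\Phi^{\top}\nabla^2 f\,\nabla\Phi)\,dx$, and then integrate by parts to pass from the weak Newton equation to the strong form \eqref{equ:wnf_kl}. The only presentational difference is that the paper quotes the Hessian formula as a known fact from Gamma calculus and \citep{goaib}, whereas you outline its derivation via the second variation along the Wasserstein geodesic; either route leads to the same operator $\mcH_E^W(\rho)\Phi=\nabla^2:(\rho\nabla^2\Phi)-\nabla\cdot(\rho\nabla^2 f\,\nabla\Phi)$.
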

Here we notice that $\Phi_t^{\operatorname{Newton}}$ is the solution to the Wasserstein Newton's direction equation \eqref{equ:wnf_kl}. In Figure \ref{figure1}, we provide a sampling (particle) formulation of Wasserstein Newton's flows. We compare formulations among Wasserstein Newton's flows, Wasserstein gradient flows and overdamped Langevin dynamics.
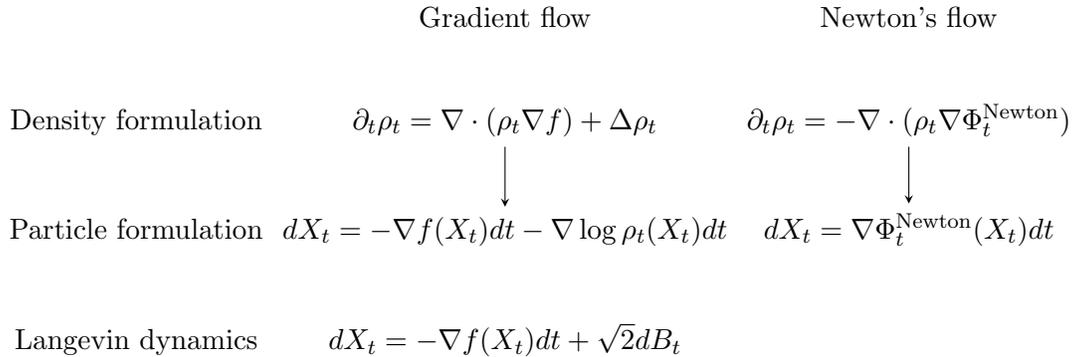
\begin{figure}[ht]
\centering
\begin{tikzpicture}
  \matrix (m) [matrix of math nodes,row sep=2em,column sep=0em,minimum width=1em]
  {
    &\text{Gradient flow} & \text{Newton's flow}\\
    \text{Density formulation}& \p_t\rho_t =\nabla\cdot(\rho_t\nabla f)+\Delta \rho_t & \p_t\rho_t = -\nabla\cdot(\rho_t\nabla\Phi_t^{\operatorname{Newton}})\\
    \text{Particle formulation}& dX_t = -\nabla f(X_t)dt-\nabla\log \rho_t(X_t)dt & dX_t = \nabla\Phi_t^{\operatorname{Newton}}(X_t)dt \\
    \text{Langevin dynamics}&dX_t = -\nabla f(X_t)dt+\sqrt{2}dB_t&\\};
  \path[-stealth]
    (m-2-2) edge node [below] {} (m-3-2)
    (m-2-3) edge node [below] {} (m-3-3);
\end{tikzpicture}
\caption{The relation among Wasserstein gradient flow, Newton's flow and Langevin dynamics. Our approach derive the particle formulation of Wasserstein Newton's flow of KL divergence.}
\label{figure1}
\end{figure}

In literature, second-order methods are developed for optimization problems on Riemannian manifold, see \citep{otorm, gcoao}. Here we are interested in density manifolds, i.e., probability space with information metrics. Compared to known results in Riemannian optimization, we not only develop methods in probability space but also find efficient sampling representations of the algorithms. In discrete probability simplex with the Fisher-Rao metric and exponential family models, the Newton's method has also been studied by \citet{cowig}, known as the second order method in information geometry. Also, \citet{asvnm,psvna} design second-order methods for the Stein variational gradient descent direction. Our approach generalizes these results to information metrics, especially for the Wasserstein metric. 
On the other hand, the Newton-type MCMC method has been studied in \citep{sqnmc}, known as Hessian Approximated MCMC (HAMCMC) method. 
The differences between HAMCMC and our proposed Newton's Langevin dynamics can be observed from evolutions in probability space. HAMCMC utilizes the Hessian matrix of logarithm of target density function and derives the associated drift-diffusion process. In density space, it is still a linear local partial differential equation (PDE). 
Newton's Langevin dynamics apply the Hessian operator of KL divergence based on the Wasserstein metric. In density space, the Wasserstein Newton's flow is a nonlocal PDE. 
A careful comparison of all related Langevin dynamics in analytical (Appendix \ref{NLD1D}) and numerical examples are provided. 

We organize this paper as follows. In section \ref{section2}, we briefly review information metrics and corresponding gradient operators in probability space. We introduce properties of Hessian operators and derive 
information Newton's flows in section \ref{section3}. Focusing on Wasserstein Newton's flows of KL divergence, we 
derive Newton's Langevin dynamics in section \ref{sec:nld}. Two sampling efficient numerical algorithms of Wasserstein Newton's method are presented in section \ref{sec:par}. In section \ref{sec:conv}, we prove the asymptotic convergence rate of information Newton's method with approximated Newton's direction. Several numerical examples for sampling problems are provided in section \ref{sec:num}.

\section{Review on Newton's flows and information metrics}\label{section2}
In this section, we briefly review Newton's methods and Newton's flows in Euclidean spaces and Riemannian manifolds. Then, we focus on a probability space, in which we introduce information metrics with the associated gradient and Hessian operators. Based on them, we will derive the Newton's flow under information metrics later on. Throughout this paper, we use $\la\cdot,\cdot \ra $ and $\|\cdot\|$ to denote the Euclidean inner product and norm in $\mbR^d$. 

\subsection{Finite dimensional Newton's flow}
We first briefly review Newton's methods and Newton's flows in Euclidean spaces. Given an objective function $f\colon \mathbb{R}^d\rightarrow \mathbb{R}$, consider an optimization problem:
$$
\min_{x\in \mbR^d} f(x).
$$
The update rule of the (damped) Newton's method follows
$$
x_{k+1} = x_k+\alpha_kp_k,\quad p_k=- \pp{\nabla^2f(x_k)}^{-1}\nabla f(x_k).
$$
Here $\alpha_k>0$ is a step size and $p_k$ is called the Newton's direction. With $\alpha_k=1$, we recover the classical Newton's methods. By taking a limit $\alpha_k\to 0$, the Newton's method in continuous-time, namely Newton's flow, writes
\begin{equation}\label{equ:new_eud}
    \dot x = -\pp{\nabla^2 f(x)}^{-1}\nabla f(x).\tag{Euclidean Newton's flow}
\end{equation}

We next consider an optimization problem on a Riemannian manifold $\mcM\subset \mbR^d$. Given an objective function $f\colon \mcM\rightarrow \mathbb{R}$, consider 
$$
\min_{x\in \mcM} f(x).
$$
The tangent space $T_x\mcM$ and the cotangent space $T_x^*\mcM$ at $x$ are identical to a linear subspace of $\mbR^d$. For $p,q\in T_x\mcM$, let $\lra{p,q}_x=p^T\mcG(x)q$ denote an inner product in tangent space $T_x\mcM$ at $x$. Here $\mcG(x)$ is called the metric tensor, which corresponds to a symmetric semi-positive definite matrix in $\mbR^{d\times d}$. For the Euclidean case, we can view $T_x\mcM=T_x^*\mcM=\mbR^d$ and $\mcG(x)=I$, where $I$ is an identity matrix. The Riemannian gradient of $f$ at $x$ is the unique tangent vector $v$ such that the following equality holds for all $p\in T_x\mcM$.
$$
\lra{\grad f(x),p}_x = \lim_{\epsilon\to0}\frac{f(x+\epsilon p)-f(x)}{\epsilon}.
$$
The Riemannian Hessian of $f$ at $x$ is a linear mapping from $T_x\mcM$ to $T_x\mcM$ defined by
$$
\Hess f(x) p = \nabla_p \grad f(x),\quad \forall p\in T_x\mcM.
$$
Here $\nabla_p \grad f(x)$ is the covariant derivative of $\grad f(x)$ w.r.t. the tangent vector $p$. Detailed definitions of gradient and Hessian operators on a Riemannian manifold can be found in \citep[Chapter 1]{oaorm}. The update rule of the Newton's method writes
$$
x_{k+1} = R_{x_k}(\alpha_kp_k), \quad p_k = -(\Hess f(x_k))^{-1}\grad f(x_k).
$$
Here $R_{x_k}$ can be the exponential mapping or the retraction (first-order approximation of the exponential mapping) at $x_k$. Based on the Riemannian metric of $\mcM$, the exponential mapping uniquely maps a tangent vector to a point in $\mcM$ along the geodesic curve. Different from the Euclidean case, the update of $x_{k+1}$ is based on the (approximated) geodesic curve of $\mcM$. In continuous time, the Newton's flow follows
\begin{equation}\label{equ:new_rie}
    \dot x = -(\Hess f(x))^{-1}\grad f(x). \tag{Riemannian Newton's flow}
\end{equation}
From now on, we consider optimization problems in probability space. Suppose that sample space $\Omega$ is a region in $\mbR^d$. Let $\mcF(\Omega)$ represent the set of smooth functions on $\Omega$. Denote the set of probability density
\begin{equation*}
\mcP(\Omega) =\Big\{\rho\in \mathcal{F}(\Omega)\colon \int_\Omega \rho dx=1,\quad \rho\geq0\Big\}.
\end{equation*}
The optimization problem in $\mcP(\Omega)$ takes the form:
$$
\min_{\rho\in\mcP(\Omega)} E(\rho).
$$ 
Here $E(\rho)$ is the objective or loss functional. It evaluates certain divergence or metric functional between $\rho$ and a target density $\rho^*\in\mcP(\Omega)$. In machine learning problems, typical examples of $E(\rho)$ include the KL divergence, Maximum mean discrepancy (MMD), cross entropy, etc. Similar to \eqref{equ:new_eud} and \eqref{equ:new_rie}, the Newton's flow in probability space (density manifold) takes the form
\begin{equation}\label{equ:new_prob}
    \p_t\rho_t = -(\Hess E(\rho_t))^{-1}\grad E(\rho_t).\tag{Information Newton's flow}
\end{equation}
Here $\grad$ and $\Hess$ represent the gradient and the Hessian operator with respect to certain information metric, respectively. To understand \eqref{equ:new_prob}, we briefly review the information metrics with the associated gradient operators.

\subsection{Information metrics}
We first define the tangent space and the cotangent space in probability space. The tangent space at $\rho\in\mcP(\Omega)$ is defined by 
$$
T_\rho\mathcal{P}(\Omega)=\left\{\sigma\in\mcF(\Omega):\int \sigma dx=0\right\}.
$$
The cotangent space $T^*_\rho\mcP(\Omega)$ is equivalent to $\mcF(\Omega)/\mbR$, which represents the set of functions in $\mcF(\Omega)$ defined up to addition of constants. 

\begin{definition}[Metric in probability space]
For a given $\rho\in \mcP(\Omega)$, a metric tensor $\mcG(\rho): T_\rho\mcP(\Omega)\to T^*_\rho\mcP(\Omega)$ is an invertible mapping from the tangent space $ T_\rho\mcP(\Omega)$ to the cotangent space $ T^*_\rho\mcP(\Omega)$. This metric tensor defines the metric (inner product) on the tangent space $T_\rho\mcP(\Omega) $. Namely, for $\sigma_1, \sigma_2\in T_\rho\mcP(\Omega)$, we define the inner product $g_\rho \colon T_\rho\mcP(\Omega)\times T_\rho\mcP(\Omega)\rightarrow\mathbb{R}$ by 
$$
g_\rho(\sigma_1,\sigma_2) =\int   \sigma_1\mcG(\rho)\sigma_2dx=\int   \Phi_1\mcG(\rho)^{-1}\Phi_2dx,
$$
where $\Phi_i$ is the solution to $\sigma_i= \mcG(\rho)^{-1}\Phi_i$, $i=1,2$. 
\end{definition}

We present two essential examples of metrics in probability space $\mcP(\Omega)$:  Fisher-Rao metric and Wasserstein metric. 

\begin{example}[Fisher-Rao metric]
The inverse of the Fisher-Rao metric tensor follows
$$
\mcG^F(\rho)^{-1}\Phi = \rho\lp\Phi -\int   \Phi\rho dx\rp, \quad \Phi\in T_\rho^*\mcP(\Omega).
$$
The Fisher-Rao metric is defined by
$$
g^F_\rho( \sigma_1,\sigma_2)=\int \Phi_1\Phi_2\rho dx-\lp\int \Phi_1\rho dx\rp\lp\int \Phi_2\rho dx\rp, \quad \sigma_1, \sigma_2\in T_\rho\mcP(\Omega),
$$
where $\Phi_i$ satisfies $\sigma_i=\rho\lp\Phi_i -\int   \Phi_i\rho dx\rp,\,i=1,2$. 
\end{example}
\begin{example}[Wasserstein metric]
The inverse of the Wasserstein metric tensor satisfies
$$
\mcG^W(\rho)^{-1}\Phi = -\nabla\cdot(\rho\nabla\Phi), \quad \Phi\in T_\rho^*\mcP(\Omega).
$$
The Wasserstein metric is given by
$$
g^W_\rho(\sigma_1,\sigma_2)=\int   \rho\la \nabla \Phi_1,\nabla \Phi_2\ra dx, \quad \sigma_1, \sigma_2\in T_\rho\mcP(\Omega),
$$
where $\Phi_i$ is the solution to $\sigma_i=-\nabla \cdot(\rho\nabla \Phi_i),\,i=1,2$. 
\end{example}
\subsection{Gradient operators}
The gradient operator for the objective functional $E(\rho)$ in $(\mcP(\Omega), \mcG(\rho))$ satisfies
\begin{equation*}
\grad E(\rho) = -\mcG(\rho)^{-1}\frac{\delta E}{\delta \rho}.
\end{equation*}
Here $\frac{\delta E}{\delta \rho}$ is the $L^2$ first variation w.r.t. $\rho$. The gradient flow follows
$$
\p_t\rho_t = -\grad E(\rho_t)=-\mcG(\rho)^{-1}\frac{\delta E}{\delta \rho_t}.
$$
We present gradient operators under either Fisher-Rao metric or Wasserstein metric.
\begin{example}[Fisher-Rao gradient operator]
The Fisher-Rao gradient operator satisfies
\begin{equation*}
\grad^F E(\rho)= \rho\lp \frac{\delta E}{\delta \rho}-\int    \frac{\delta E}{\delta \rho}\rho dx\rp.
\end{equation*}
\end{example}
\begin{example}[Wasserstein gradient operator]
The Wasserstein gradient operator writes
\begin{equation*}
\grad^W E(\rho) =-\nabla \cdot \lp\rho \nabla \frac{\delta E}{\delta \rho}\rp.
\end{equation*}
\end{example}

\section{Information Newton's flow}\label{section3}
In this section, we introduce and discuss properties of Hessian operators in probability space. Then, we 
formulate Newton's flows under information metrics. This is based on the previous definition of gradient operators and the inverse of Hessian operators.

\subsection{Information Hessian operators}
In this subsection, we review the definition of Hessian operators in probability space and provide the exact formulations of Hessian operators. 

For $\sigma\in T_\rho \mcP(\Omega)$, there exists a unique geodesic curve $\hat \rho_s$, which satisfies $\hat\rho_s|_{s=0} = \rho$ and $\hat\p_s\rho_s|_{s=0} = \sigma$. The Hessian operator of $E(\rho)$ w.r.t. metric tensor $\mcG(\rho)$ is a mapping $ \Hess E(\rho):T_\rho \mcP(\Omega) \to T_\rho \mcP(\Omega)$, which is defined by
\begin{equation*}
g_\rho(\Hess E(\rho) \sigma, \sigma) = g_\rho(\sigma, \Hess E(\rho) \sigma) = \left.\frac{d^2}{ds^2} E(\hat\rho_s)\right|_{s=0}.
\end{equation*}
Combining with the metric tensor, the Hessian operator uniquely defines a self-adjoint mapping $\mcH_E(\rho):T_\rho^*\mcP(\Omega)\to T_\rho \mcP(\Omega)$, which satisfies
\begin{equation*}
\int \Phi \mcH_E(\rho) \Phi dx = g_\rho(\sigma, \Hess E(\rho) \sigma),\quad \Phi = \mcG(\rho)\sigma.
\end{equation*}
In Proposition \ref{prop:php}, we give an exact formulation of $\int \Phi \mcH_E(\rho) \Phi dx$ and a relationship between $\mcH_E(\rho)$ and $\Hess E(\rho)$. 
\begin{proposition}\label{prop:php}
The quantity $g_\rho(\sigma, \Hess E(\rho) \sigma)$ is a bi-linear form of $\Phi$:
\begin{equation}\label{equ:php}
\begin{aligned}
\int \Phi \mcH_E(\rho) \Phi dx=&g_\rho(\sigma, \Hess E(\rho) \sigma)\\
=&-\frac{1}{2}\int \mcA(\rho)(\Phi,\Phi)\mcG(\rho)^{-1}\frac{\delta E}{\delta \rho} dx+\int   \mcA(\rho)\pp{\Phi,\frac{\delta E}{\delta \rho}} \mcG(\rho)^{-1}\Phi dx\\
 &+\int \int \pp{\mcG(\rho)^{-1}\Phi}(y)\frac{\delta^2 E}{\delta \rho^2}(x,y)dy \pp{\mcG(\rho)^{-1}\Phi}(x) dx.
\end{aligned}
\end{equation}
Here $\frac{\delta^2 E}{\delta \rho^2} (x,y)$ is defined by
$$
\frac{\delta^2 E}{\delta \rho^2} (x,y) = \frac{\delta}{\delta \rho}\pp{ \int \frac{\delta E}{\delta \rho}(y)\delta(x-y)dy},
$$
where $\delta(x)$ is the Dirac delta function. Here $\mcA(\rho):T_\rho^*\mcP(\Omega)\times T_\rho^*\mcP(\Omega)\to T^*_\rho\mcP(\Omega)$ is a bi-linear operator which satisfies
\begin{equation*}
   \mcA(\rho)(\Phi_1,\Phi_2) = \frac{\delta}{\delta \rho}\int \Phi_1\mcG(\rho)^{-1}\Phi_2 dx,\quad \forall \Phi_1,\Phi_2\in T_\rho^*\mcP(\Omega).
\end{equation*}
Moreover, the operator $\mcH_E(\rho)$ satisfies
\begin{equation}\label{equ:hehess}
    \mcH_E(\rho) = \Hess E(\rho)\mcG(\rho)^{-1}.
\end{equation}
\end{proposition}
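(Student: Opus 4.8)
The plan is to compute the second derivative $\frac{d^2}{ds^2}E(\hat\rho_s)\big|_{s=0}$ along the geodesic $\hat\rho_s$ directly, using the chain rule together with the geodesic equation, and then to reorganize the resulting terms into the three contributions displayed in \eqref{equ:php}. First I would write $\frac{d}{ds}E(\hat\rho_s) = \int \frac{\delta E}{\delta\rho}(\hat\rho_s)\,\p_s\hat\rho_s\,dx$. Differentiating once more in $s$ produces two groups of terms: one where the derivative hits $\frac{\delta E}{\delta\rho}$, giving a term with $\frac{\delta^2 E}{\delta\rho^2}(x,y)$ contracted against $\p_s\hat\rho_s$ on both arguments, and one where the derivative hits $\p_s\hat\rho_s$, giving a term with $\p_s^2\hat\rho_s$. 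At $s=0$ the first group immediately yields the double-integral term $\int\int (\mcG(\rho)^{-1}\Phi)(y)\,\frac{\delta^2 E}{\delta\rho^2}(x,y)\,dy\,(\mcG(\rho)^{-1}\Phi)(x)\,dx$, since $\p_s\hat\rho_s|_{s=0}=\sigma=\mcG(\rho)^{-1}\Phi$.

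The crux is to handle the term $\int \frac{\delta E}{\delta\rho}\,\p_s^2\hat\rho_s\,dx$ at $s=0$, and this is where the geodesic equation enters. A geodesic curve in $(\mcP(\Omega),\mcG)$ is characterized variationally by being a critical point of the kinetic energy $\frac12\int g_{\hat\rho_s}(\p_s\hat\rho_s,\p_s\hat\rho_s)\,ds$; equivalently it satisfies the Euler--Lagrange (geodesic) equation, which expresses $\p_s^2\hat\rho_s$ in terms of $\p_s\hat\rho_s$ and the Christoffel-type symbols of the metric. Concretely, writing $\Phi_s = \mcG(\hat\rho_s)\,\p_s\hat\rho_s$ for the momentum, the geodesic flow has Hamiltonian $\tfrac12\int \Phi_s\,\mcG(\hat\rho_s)^{-1}\Phi_s\,dx$, and Hamilton's equations give $\p_s\hat\rho_s = \mcG(\hat\rho_s)^{-1}\Phi_s$ together with $\p_s\Phi_s = -\tfrac12\,\mcA(\hat\rho_s)(\Phi_s,\Phi_s)$, where $\mcA$ is exactly the bilinear operator defined in the statement as the $\rho$-variation of $\int\Phi_1\mcG(\rho)^{-1}\Phi_2\,dx$. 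From this I would compute $\p_s^2\hat\rho_s = \p_s(\mcG(\hat\rho_s)^{-1}\Phi_s)$, applying the product/chain rule: one piece comes from differentiating $\mcG(\hat\rho_s)^{-1}$ (which again produces an $\mcA$-type term contracted with $\p_s\hat\rho_s$ and $\Phi_s$), and one piece comes from $\p_s\Phi_s = -\tfrac12\mcA(\Phi_s,\Phi_s)$. Pairing with $\frac{\delta E}{\delta\rho}$ and setting $s=0$ then yields precisely the first two integrals on the right-hand side of \eqref{equ:php}, namely $-\tfrac12\int \mcA(\rho)(\Phi,\Phi)\,\mcG(\rho)^{-1}\frac{\delta E}{\delta\rho}\,dx$ and $\int \mcA(\rho)(\Phi,\frac{\delta E}{\delta\rho})\,\mcG(\rho)^{-1}\Phi\,dx$, once one uses the definition of $\mcA$ and self-adjointness of $\mcG(\rho)^{-1}$ to move the variation onto the right factor. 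I expect the bookkeeping here — keeping track of which variable $\mcA$'s first variation is taken in, and justifying the symmetry $\int\mcA(\rho)(\Phi,\Psi)\,\chi\,dx$ patterns — to be the main obstacle; it is mostly a careful but routine manipulation of first variations.

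Finally, the identity $\mcH_E(\rho) = \Hess E(\rho)\,\mcG(\rho)^{-1}$ follows by unwinding definitions: by construction $\int\Phi\,\mcH_E(\rho)\Phi\,dx = g_\rho(\sigma,\Hess E(\rho)\sigma)$ with $\Phi = \mcG(\rho)\sigma$, and expanding $g_\rho$ via the metric tensor gives $g_\rho(\sigma,\Hess E(\rho)\sigma) = \int \sigma\,\mcG(\rho)\,\Hess E(\rho)\sigma\,dx = \int \Phi\,\mcG(\rho)^{-1}\mcG(\rho)\Hess E(\rho)\sigma\,dx$. Substituting $\sigma = \mcG(\rho)^{-1}\Phi$ and comparing with the defining bilinear form for $\mcH_E(\rho)$, the polarization identity (both sides are symmetric bilinear forms in $\Phi$) forces $\mcH_E(\rho) = \Hess E(\rho)\,\mcG(\rho)^{-1}$ as operators $T^*_\rho\mcP(\Omega)\to T_\rho\mcP(\Omega)$, which also makes the self-adjointness of $\mcH_E(\rho)$ manifest since $\Hess E(\rho)$ is self-adjoint with respect to $g_\rho$.
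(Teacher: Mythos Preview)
Your proposal is correct and follows essentially the same approach as the paper: both compute $\frac{d^2}{ds^2}E(\hat\rho_s)$ via the Hamiltonian form of the geodesic equation $\p_s\hat\rho_s=\mcG(\hat\rho_s)^{-1}\Phi_s$, $\p_s\Phi_s=-\tfrac12\mcA(\hat\rho_s)(\Phi_s,\Phi_s)$, and both deduce \eqref{equ:hehess} from equality of the quadratic forms together with self-adjointness of $\Hess E(\rho)\mcG(\rho)^{-1}$. The only cosmetic difference is that the paper substitutes $\p_s\hat\rho_s=\mcG^{-1}\Phi_s$ into the first derivative \emph{before} differentiating again (so the split is into a $\p_s\Phi_s$-part and a $\frac{\delta}{\delta\rho}$-of-everything part), whereas you keep $\p_s\hat\rho_s$ and compute $\p_s^2\hat\rho_s$ explicitly; the resulting terms match one-to-one, and the paper packages your polarization argument as a short lemma stating that a self-adjoint operator with vanishing quadratic form is zero.
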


Now, we are ready to present the information Newton's flow in probability space.
\begin{proposition}[Information Newton's flow]The Newton's flow of $E(\rho)$ in $(\mcP(\Omega), \mcG(\rho))$ satisfies
\begin{equation*}
\p_t\rho_t+(\Hess E(\rho_t))^{-1} \mcG(\rho_t)^{-1}\frac{\delta E}{\delta{\rho_t}} = 0.
\end{equation*}
This is equivalent to
\begin{equation}
\lbb{
&\p_t\rho_t-\mcG(\rho_t)^{-1}\Phi_t = 0,\\
&\mcH_E(\rho_t)\Phi_t + \mcG(\rho_t)^{-1}\frac{\delta}{\delta{\rho_t}}E(\rho_t)=0.
}
\end{equation}
\end{proposition}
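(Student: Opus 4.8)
The statement is obtained by unwinding the definitions already recorded above; no new geometric input is needed beyond the expression for the gradient operator from the previous subsection and the identity \eqref{equ:hehess}, $\mcH_E(\rho)=\Hess E(\rho)\mcG(\rho)^{-1}$, from Proposition~\ref{prop:php}. First I would substitute the gradient operator $\grad E(\rho_t)=\mcG(\rho_t)^{-1}\frac{\delta E}{\delta\rho_t}$ into the information Newton's flow \eqref{equ:new_prob}, namely $\p_t\rho_t=-(\Hess E(\rho_t))^{-1}\grad E(\rho_t)$. Since $(\Hess E(\rho_t))^{-1}$ maps $T_{\rho_t}\mcP(\Omega)$ into itself and $\grad E(\rho_t)\in T_{\rho_t}\mcP(\Omega)$, the composition is well defined, and rearranging yields
\begin{equation*}
\p_t\rho_t+(\Hess E(\rho_t))^{-1}\mcG(\rho_t)^{-1}\frac{\delta E}{\delta\rho_t}=0,
\end{equation*}
the first asserted identity.

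For the equivalent two-equation form, I would introduce the cotangent (dual) variable $\Phi_t:=\mcG(\rho_t)\,\p_t\rho_t\in T_{\rho_t}^*\mcP(\Omega)$, which is precisely the first line $\p_t\rho_t-\mcG(\rho_t)^{-1}\Phi_t=0$. Rewriting the single-equation form as $\Hess E(\rho_t)\,\p_t\rho_t=-\mcG(\rho_t)^{-1}\frac{\delta E}{\delta\rho_t}$ and inserting $\p_t\rho_t=\mcG(\rho_t)^{-1}\Phi_t$ gives $\Hess E(\rho_t)\mcG(\rho_t)^{-1}\Phi_t=-\mcG(\rho_t)^{-1}\frac{\delta E}{\delta\rho_t}$; by \eqref{equ:hehess} the left-hand side is $\mcH_E(\rho_t)\Phi_t$, which is the second line $\mcH_E(\rho_t)\Phi_t+\mcG(\rho_t)^{-1}\frac{\delta E}{\delta\rho_t}=0$. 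For the converse, starting from the two-equation system I would eliminate $\Phi_t$ through the first line ($\Phi_t=\mcG(\rho_t)\,\p_t\rho_t$, using invertibility of the metric tensor), apply \eqref{equ:hehess} again so that $\mcH_E(\rho_t)\Phi_t=\Hess E(\rho_t)\,\p_t\rho_t$, and then apply $(\Hess E(\rho_t))^{-1}$ to recover the single equation; since each step is an equivalence, the two formulations coincide.

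The calculation itself is routine bookkeeping, and the only places that need care are (i) keeping straight which quantities are tangent vectors and which are cotangent vectors---$\frac{\delta E}{\delta\rho_t}$ and $\Phi_t$ belong to $T_{\rho_t}^*\mcP(\Omega)$, hence are only defined up to an additive constant, which is harmless because $\mcG(\rho_t)^{-1}$ is well posed on that quotient, whereas $\p_t\rho_t$, $\grad E(\rho_t)$ and $\mcG(\rho_t)^{-1}\Phi_t$ belong to $T_{\rho_t}\mcP(\Omega)$---and (ii) the invertibility of $\Hess E(\rho_t)$, which is the substantive hypothesis implicit in writing $(\Hess E(\rho_t))^{-1}$ and is the density-manifold counterpart of the nondegeneracy assumption behind \eqref{equ:new_eud} and \eqref{equ:new_rie}. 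I expect (ii) to be the only genuine obstacle: wherever $\Hess E(\rho_t)$ degenerates, the Newton direction has to be interpreted through a pseudoinverse or a regularized variant, but under the standing invertibility assumption the stated equivalence follows immediately.
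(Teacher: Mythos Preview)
Your argument is correct and mirrors the paper's treatment: the paper states this proposition without a standalone proof, treating it as an immediate reformulation via the gradient formula $\grad E(\rho)=\mcG(\rho)^{-1}\frac{\delta E}{\delta\rho}$ and the identity $\mcH_E(\rho)=\Hess E(\rho)\mcG(\rho)^{-1}$ from Proposition~\ref{prop:php}; when the paper does spell out the analogous step in the Gaussian setting, it uses exactly your maneuver of writing $(\Hess E)^{-1}=\mcG^{-1}\mcH_E^{-1}$ and introducing the cotangent variable. Your additional remarks on tangent/cotangent bookkeeping and the implicit invertibility hypothesis are accurate and appropriate.
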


In particular, we focus on Wasserstein Newton's flow of KL divergence. Other examples of Newton's flows of different objective functions under either Fisher-Rao metric or Wasserstein metric are presented in Appendix \ref{ssec:fnf} and \ref{ssec:wnf}.

\begin{example}[Wasserstein Newton's flow of KL divergence]
\label{eg:wnf}
In this example we prove Theorem \ref{thm:wnf_kl}. As a known fact in \citep{goaib} and Gamma calculus \citep{diff_hyper, gopsv}, the Hessian operator of KL divergence under the Wasserstein metric follows
\begin{equation*}
g^W_{\rho}(\sigma,\Hess^W E(\rho)\sigma) = \int \Big(\|\nabla^2\Phi\|_F^2+(\nabla \Phi)^T\nabla^2 f\nabla \Phi)\Big)\rho dx,
\end{equation*}
where $\sigma = -\nabla\cdot(\rho\nabla \Phi)$ and $\|\cdot\|_F$ is the Frobenius norm of a matrix in $\mbR^{n\times n}$.
Via integration by parts, we validate that the operator $\mcH_E^W(\rho)$ follows
\begin{equation}\label{equ:he_kl}
\mcH^W_E(\rho)\Phi = \nabla^2:(\rho\nabla^2\Phi)-\nabla\cdot(\rho\nabla^2f\nabla \Phi).
\end{equation}
\end{example}

We also present the Wasserstein Newton's flow of KL divergence in Gaussian families. Proposition \ref{prop:exist} ensures the existence of information Newton's flows in Gaussian families.
\begin{proposition}\label{prop:exist}
Suppose that $\rho_0,\rho^*$ are Gaussian distributions with zero means and their covariance matrices are $\Sigma_0$ and $\Sigma^*$. $E(\Sigma)$ evaluates the KL divergence from $\rho$ to $\rho^*$:
\begin{equation}\label{esigma}
    E(\Sigma) = \frac{1}{2}\pp{\tr(\Sigma (\Sigma^*)^{-1})-d-\log\det\pp{\Sigma (\Sigma^*)^{-1}}}.
\end{equation}
Let $(\Sigma_t,S_t)$ satisfy
\begin{equation}
\left\{  \begin{aligned}
    &\dot \Sigma_t -2(S\Sigma_t+\Sigma S_t)=0,\\
    &2\Sigma_t S_t(\Sigma^*)^{-1}+2(\Sigma^*)^{-1}S_t\Sigma_t+4S_t = -(\Sigma_t (\Sigma^*)^{-1}+(\Sigma^*)^{-1}\Sigma_t-2I).
    \end{aligned}\right.
\end{equation}
with initial values $\Sigma_t|_{t=0}=\Sigma_0$ and $S_t|_{t=0}=0$. Thus, for any $t\geq 0$, $\Sigma_t$ is well-defined and stays positive definite. We denote
$$
\rho_t(x)=\frac{(2\pi)^{-n/2}}{\sqrt{\det(\Sigma_t)}}\exp\lp-\frac{1}{2}x^T\Sigma_t^{-1}x\rp, \quad \Phi_t(x)=x^TS_tx+C(t),
$$
where $C(t)=-t+\frac{1}{2}\int_0^t\log\det(\Sigma_s(\Sigma^*)^{-1})ds$. Then, $\rho_t$ and $\Phi_t$ follow the information Newton's flow \eqref{equ:wnf_kl} with initial values $\rho_t|_{t=0}=\rho_0$ and $\Phi_t|_{t=0}=0$.
\end{proposition}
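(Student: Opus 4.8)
The plan splits into two parts: (a) showing the matrix system for $(\Sigma_t,S_t)$ has a forward‑global solution along which $\Sigma_t$ stays positive definite, and (b) verifying that the Gaussian pair $(\rho_t,\Phi_t)$ built from such a solution satisfies \eqref{WN} and \eqref{equ:wnf_kl}. Since $\rho^*\propto e^{-f}$ is the centered Gaussian with covariance $\Sigma^*$, throughout we take $f(x)=\frac12 x^T(\Sigma^*)^{-1}x$ up to an additive constant, so $\nabla f=(\Sigma^*)^{-1}x$ and $\nabla^2 f=(\Sigma^*)^{-1}$. I would carry out (a) first.

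For (a), the key observation is that the second equation is purely algebraic in $S_t$. On the space of symmetric $d\times d$ matrices, introduce the linear operator $L_\Sigma(S):=2\Sigma S(\Sigma^*)^{-1}+2(\Sigma^*)^{-1}S\Sigma+4S$ and the symmetric matrix $R(\Sigma):=\Sigma(\Sigma^*)^{-1}+(\Sigma^*)^{-1}\Sigma-2I$; then the equation reads $L_{\Sigma_t}(S_t)=-R(\Sigma_t)$. Writing $\Sigma=AA^T$, $(\Sigma^*)^{-1}=BB^T$ and using cyclicity of the trace, a short computation gives $\tr\!\big(S\,L_\Sigma(S)\big)=4\|A^TSB\|_F^2+4\|S\|_F^2\ge 4\|S\|_F^2$, so $L_\Sigma$ is symmetric positive definite, hence invertible, for every $\Sigma\succeq 0$. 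Consequently $S_t=S(\Sigma_t):=-L_{\Sigma_t}^{-1}R(\Sigma_t)$ is a smooth symmetric‑matrix‑valued function of $\Sigma_t$ on the open cone $\{\Sigma\succ 0\}$, bounded there by $\frac14\|R(\Sigma)\|_F$, and the first equation reduces to the autonomous ODE $\dot\Sigma_t=F(\Sigma_t):=2\big(S(\Sigma_t)\Sigma_t+\Sigma_t S(\Sigma_t)\big)$ with $F$ smooth on that cone. Picard--Lindel\"of then gives a unique maximal solution from $\Sigma_0\succ 0$, with $S_t$ and $C(t)$ determined along the way.

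The remaining step of (a) — that the solution is global forward in time and never leaves the cone — is where the work lies, and the objective $E$ itself is the Lyapunov function. Writing $E(\Sigma)=\frac12\sum_i(\lambda_i-1-\log\lambda_i)$ with $\lambda_i$ the eigenvalues of $\Sigma(\Sigma^*)^{-1}$, and using $\lambda-1-\log\lambda\ge 0$ with limit $+\infty$ as $\lambda\to 0^+$ or $\lambda\to\infty$, the function $E$ is nonnegative, smooth on $\{\Sigma\succ 0\}$, and blows up at the boundary of the cone and at infinity; hence each sublevel set $\{E\le c\}$ is a compact subset of the open cone. Using $\nabla_\Sigma E(\Sigma)=\frac12\big((\Sigma^*)^{-1}-\Sigma^{-1}\big)$, the first equation, cyclicity of the trace, and then the second equation,
\begin{equation*}
\tfrac{d}{dt}E(\Sigma_t)=\big\langle\tfrac12\big((\Sigma^*)^{-1}-\Sigma_t^{-1}\big),\,2(S_t\Sigma_t+\Sigma_t S_t)\big\rangle_F=\tr\!\big(S_t R(\Sigma_t)\big)=-\tr\!\big(S_t L_{\Sigma_t}(S_t)\big)=-4\|A_t^T S_t B\|_F^2-4\|S_t\|_F^2\le 0 .
\end{equation*}
Thus $\Sigma_t$ stays in the compact set $\{E\le E(\Sigma_0)\}\subset\{\Sigma\succ 0\}$, and by the escape‑from‑compact‑subsets criterion the maximal solution extends to all $t\ge 0$ with $\Sigma_t\succ 0$ throughout; so $\rho_t$ is a genuine density and, $S_t$ being bounded, $\Phi_t$ is well‑defined. (Moreover $\frac{d}{dt}E\le -4\|S_t\|_F^2$, and $S_t=0$ forces $R(\Sigma_t)=0$, i.e.\ $\Sigma_t=\Sigma^*$ via the Lyapunov equation $\Sigma^*\Gamma+\Gamma\Sigma^*=2\Sigma^*$ for $\Gamma=(\Sigma^*)^{-1/2}\Sigma_t(\Sigma^*)^{-1/2}$, so LaSalle even yields $\Sigma_t\to\Sigma^*$ — more than is asserted here.)

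For (b) I would substitute the ansatz. With $\rho_t$ the centered Gaussian of covariance $\Sigma_t$ one has $\nabla\rho_t=-\rho_t\Sigma_t^{-1}x$ and $\nabla^2\rho_t=\rho_t(\Sigma_t^{-1}xx^T\Sigma_t^{-1}-\Sigma_t^{-1})$, while $\Phi_t(x)=x^T S_t x+C(t)$ gives $\nabla\Phi_t=2S_t x$ and $\nabla^2\Phi_t=2S_t$. Each side of \eqref{WN} is then $\rho_t$ times a polynomial of degree $\le 2$ in $x$; equating the symmetrized quadratic parts and conjugating by $\Sigma_t$ yields exactly $\dot\Sigma_t=2(S_t\Sigma_t+\Sigma_t S_t)$, and the scalar parts agree automatically (take a trace). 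Likewise $\nabla^2:(\rho_t\nabla^2\Phi_t)=2\rho_t\,\tr\!\big(S_t(\Sigma_t^{-1}xx^T\Sigma_t^{-1}-\Sigma_t^{-1})\big)$, and by the elementary identity $\nabla\cdot(\rho_t M x)=\rho_t(\tr M-x^T\Sigma_t^{-1}M x)$ for constant $M$ the other three terms of \eqref{equ:wnf_kl} are handled as well; collecting everything, the symmetrized, $\Sigma_t$‑conjugated quadratic part of \eqref{equ:wnf_kl} is precisely $2\Sigma_t S_t(\Sigma^*)^{-1}+2(\Sigma^*)^{-1}S_t\Sigma_t+4S_t=-R(\Sigma_t)$, and its scalar part follows by tracing this identity after right‑multiplication by $\Sigma_t^{-1}$. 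Note $C(t)$ never enters \eqref{WN} or \eqref{equ:wnf_kl} (which see $\Phi_t$ only through $\nabla\Phi_t$ and $\nabla^2\Phi_t$); it is the normalization with $C(0)=0$, consistent with the stated initial data. The main obstacle is step (a): one must spot the coercive Lyapunov function $E$ together with the single algebraic identity $\tr(S L_\Sigma(S))=4\|A^TSB\|_F^2+4\|S\|_F^2$, which simultaneously makes $S(\Sigma)$ well‑defined and forces $\frac{d}{dt}E\le 0$.
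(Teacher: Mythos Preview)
Your proposal is correct and follows essentially the same approach as the paper. Both parts match the paper's argument: for (a) you compute $\frac{d}{dt}E(\Sigma_t)=-4\tr(S_t\Sigma_t S_t(\Sigma^*)^{-1})-4\tr(S_t^2)\le 0$ exactly as the paper does (your $\|A^TSB\|_F^2$ is their $\tr(S\Sigma S(\Sigma^*)^{-1})$), and for (b) you carry out the same direct substitution using $\nabla\rho_t=-\rho_t\Sigma_t^{-1}x$, $\nabla^2\rho_t=\rho_t(\Sigma_t^{-1}xx^T\Sigma_t^{-1}-\Sigma_t^{-1})$ to reduce each term of \eqref{equ:wnf_kl} to a trace against $\nabla^2\rho_t$. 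Your write-up is in fact more self-contained: you make the invertibility of $L_\Sigma$ explicit and spell out the sublevel-set compactness argument for global existence (the paper instead appeals to \citep[Theorem 1]{aigf}), and you also verify the continuity equation \eqref{WN}, which the paper's proof omits.
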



\section{Newton's Langevin dynamics}\label{sec:nld}
In this section, we primarily focus on 
the Wasserstein Newton's flow of KL divergence. 
We formulate it into the Newton's Langevin dynamics for Bayesian sampling problems. The connection and difference with

Let the objective functional $E(\rho)=\mathrm{D}_{\textrm{KL}}(\rho\|\rho^*)$ evaluate the KL divergence from $\rho$ to a target density $\rho^*(x)\propto \exp(-f(x))$ with $\int \exp(-f(x)) dx<\infty$. This specific optimization problem is important since it corresponds to sampling from the target density $\rho^*$. Classical Langevin MCMC algorithms evolves samples following 
overdamped Langevin dynamics (OLD), which satisfies
\begin{equation*}
dX_t = -\nabla f(X_t)dt+\sqrt{2}dB_t,
\end{equation*}
where $B_t$ is the standard Brownian motion. Denote $\rho_t$ as the density function of the distribution of $X_t$. The evolution of $\rho_t$ satisfies the Fokker-Planck equation 
\begin{equation*}
\partial_t\rho_t=\nabla\cdot(\rho_t \nabla f)+\Delta\rho_t.     
\end{equation*}
A known fact is that the Fokker-Planck equation is the Wasserstein gradient flow (WGF) of KL divergence, i.e. 
\begin{equation}\label{equ:wgf}
\begin{split}
    \p_t\rho_t=&-\grad^W\mathrm{D}_{\textrm{KL}}(\rho_t\|\rho^*)\\
    =&\mcG^W(\rho_t)^{-1}\frac{\delta}{\delta\rho_t} \mathrm{D}_{\textrm{KL}}(\rho_t\|\rho^*)\\
    =& \nabla \cdot(\rho_t \nabla (f+\log\rho_t+1))\\
    =&\nabla\cdot(\rho_t\nabla f)+\Delta\rho_t.    
    \end{split}
\end{equation}
where we use the fact that $\frac{\delta}{\delta\rho}\mathrm{D}_{\textrm{KL}}(\rho_t\|\rho^*)=\log\rho+t+f+1$ and $\rho\nabla\log\rho=\nabla\rho$.

It is worth mentioning that OLD can be viewed as particle implementations of WGF \eqref{equ:wgf}. From the viewpoint of fluid dynamics, WGF also has a Lagrangian formulation
$$
dX_t = -\nabla f(X_t)dt-\nabla \log \rho_t(X_t)dt.
$$
We name above dynamics by the Lagrangian Langevin Dynamics (LLD). Here `Lagrangian' refers to the Lagrangian coordinates (flow map) in fluid dynamics \citep{otoan}.

Overall, many sampling algorithms follow OLD or LLD. 
The evolution of corresponding density follows the Wasserstein gradient flow \eqref{equ:wgf}. E.g. the classical Langevin MCMC (unadjusted Langevin algorithm) is the time discretization of OLD. The Particle-based Variational Inference methods (ParVI), \citep{uaapb} can be viewed as the discrete-time approximation of LLD. 

In short, we notice that the Langevein dynamics can be viewed as first-order methods for Bayesian sampling problems. Analogously, the Wasserstein Newton's flow of KL divergence derived in Example \ref{eg:wnf} corresponds to certain Langevin dynamics of particle systems, named {\em Newton's Langevin dynamics.}
\begin{theorem}
Consider the Newton's Langevin dynamics
\begin{equation}
    dX_t= \nabla \Phi_t^{\operatorname{Newton}}(X_t)dt,
\end{equation}
where $\Phi_t^{\operatorname{Newton}}(x)$ is the solution to Wasserstein Newton's direction equation \eqref{equ:wnf_kl}:
\begin{equation*}
    \nabla^2:(\rho_t\nabla^2\Phi_t)-\nabla\cdot(\rho_t\nabla^2f\nabla \Phi_t) - \nabla\cdot(\rho_t\nabla f)-\Delta \rho_t=0.
\end{equation*}
Here $X_0$ follows an initial distribution $\rho^0$ and $\rho_t$ is the distribution of $X_t$. Then, $\rho_t$ is the solution to Wasserstein Newton's flow with an initial value $\rho_0=\rho^0$. 
\end{theorem}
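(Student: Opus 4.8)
The plan is to read the Newton's Langevin dynamics as the Lagrangian (characteristic) flow of the continuity equation \eqref{WN}, so that the theorem becomes an instance of the standard correspondence between transport ODEs and continuity equations, combined with the fact that $\Phi_t^{\operatorname{Newton}}$ is \emph{defined} to solve the Wasserstein Newton's direction equation \eqref{equ:wnf_kl}. First I would fix the regularity setting implicitly assumed in the statement: for each $t$, equation \eqref{equ:wnf_kl} is solved for $\Phi_t^{\operatorname{Newton}}$ using the current law $\rho_t$ of $X_t$ and the potential $f$, and the velocity field $x\mapsto\nabla\Phi_t^{\operatorname{Newton}}(x)$ is regular enough (say locally Lipschitz in $x$, uniformly on compact time intervals) that the ODE $\dot X_t=\nabla\Phi_t^{\operatorname{Newton}}(X_t)$ generates a well-defined flow map pushing $\rho^0$ forward to $\rho_t$.

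Next I would derive the continuity equation satisfied by $\rho_t$. For an arbitrary compactly supported test function $\varphi\in\mcF(\Omega)$, differentiating the expectation along the flow gives
\begin{equation*}
\frac{d}{dt}\int\varphi\,\rho_t\,dx=\mathbb{E}\big[\la\nabla\varphi(X_t),\nabla\Phi_t^{\operatorname{Newton}}(X_t)\ra\big]=\int\la\nabla\varphi,\nabla\Phi_t^{\operatorname{Newton}}\ra\,\rho_t\,dx=-\int\varphi\,\nabla\cdot(\rho_t\nabla\Phi_t^{\operatorname{Newton}})\,dx,
\end{equation*}
where the last step is integration by parts in $x$. Since $\varphi$ is arbitrary, this yields $\p_t\rho_t+\nabla\cdot(\rho_t\nabla\Phi_t^{\operatorname{Newton}})=0$ in the distributional sense, which is exactly \eqref{WN}.

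Then I would close the argument by matching with Theorem \ref{thm:wnf_kl}. By construction $\Phi_t^{\operatorname{Newton}}$ satisfies \eqref{equ:wnf_kl} at every time, so the pair $(\rho_t,\Phi_t^{\operatorname{Newton}})$ solves both equations \eqref{WN} and \eqref{equ:wnf_kl} that characterize the Wasserstein Newton's flow; since $X_0\sim\rho^0$ the initial value is $\rho_0=\rho^0$. Invoking uniqueness of the coupled system with prescribed initial data then identifies $\rho_t$ as the Wasserstein Newton's flow issued from $\rho^0$.

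The hard part is not the algebra — the passage between a transport ODE and its continuity equation is textbook — but the analytic well-posedness of this nonlocal, McKean--Vlasov-type coupling: one must ensure that \eqref{equ:wnf_kl} is solvable for $\Phi_t^{\operatorname{Newton}}$ given $\rho_t$ (the operator $\mcH^W_E(\rho)$ in \eqref{equ:he_kl} is fourth order and self-adjoint, and its positivity needs control on $\nabla^2 f$ beyond the always nonnegative $\|\nabla^2\Phi\|_F^2$ term), that the resulting velocity field is regular enough for the particle flow to exist and be smooth, and that the whole system has a unique solution so that the phrase ``$\rho_t$ is the solution'' is meaningful. In the Gaussian family these points are exactly what Proposition \ref{prop:exist} supplies; in general they are the standing assumptions under which the theorem is to be read.
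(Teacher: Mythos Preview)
Your proposal is correct and follows essentially the same approach as the paper: derive the continuity equation $\p_t\rho_t+\nabla\cdot(\rho_t\nabla\Phi_t^{\operatorname{Newton}})=0$ from the particle ODE, then observe that since $\Phi_t^{\operatorname{Newton}}$ satisfies \eqref{equ:wnf_kl} by construction, the pair $(\rho_t,\Phi_t^{\operatorname{Newton}})$ solves the Wasserstein Newton's flow system. The paper's own proof is a terse two-line version of exactly this argument, omitting the test-function derivation and the regularity/well-posedness caveats you (rightly) flag.
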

\begin{proof}
Note that $\rho_t$ is the distribution of $X_t$. The dynamics of $X_t$ implies
$$
\p_t\rho_t+\nabla\cdot(\rho_t\nabla \Phi_t^{\operatorname{Newton}})=0.
$$
Because $\Phi_t$ satisfies the Wasserstein Newton's direction equation \eqref{equ:wnf_kl}, $\rho_t$ is the solution to Wasserstein Newton's flow.
\end{proof}
\begin{remark}
We notice that the Newton's Langevien dynamics is different from HAMCMC \citep{sqnmc}. Detailed comparisons can be found in Appendix \ref{ssec:cmp_hamcmc}. 
\end{remark}

The following proposition provide a closed-form formula for NLD in 1D Gaussian family.
\begin{proposition}\label{prop:gauss1d}
Assume that $f(x)=(2\Sigma^*)^{-1}(x-\mu^*)^2$, where $\Sigma^*>0$ and $\mu^*$ are given. Suppose that the particle system $X_0$ follows the Gaussian distribution. Then $X_t$ follows a Gaussian distribution with mean $\mu_t$ and variance $\Sigma_t$. The corresponding NLD satisfies 
$$
dX_t = \pp{\frac{\Sigma^*-\Sigma}{\Sigma^*+\Sigma_t}X_t-\frac{2\Sigma^*}{\Sigma^*+\Sigma_t}\mu_t+\mu^*}dt.
$$
And the evolution of $\mu_t$ and $\Sigma_t$ satisfies
$$
d\mu_t =(-\mu_t+\mu^*)dt,\quad d\Sigma_t =2\frac{\Sigma^*-\Sigma_t}{\Sigma^*+\Sigma_t}\Sigma_tdt.
$$
The explicit solutions of $\mu_t$ and $\Sigma_t$ satisfy 
$$
\mu_t=e^{-t}(\mu_0-\mu^*)+\mu^*,\quad \Sigma_t=\Sigma^*+(\Sigma_0-\Sigma^*)e^{-t}\sqrt{\frac{e^{-2t}(\Sigma_0-\Sigma^*)^2}{4\Sigma_0^2}+\frac{1}{\Sigma_0\Sigma^*}}.
$$
\end{proposition}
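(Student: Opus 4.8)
The plan is an ansatz-and-match argument: within the Gaussian--quadratic class the Wasserstein Newton direction equation \eqref{equ:wnf_kl} collapses to a finite algebraic system, which I then feed into the continuity equation to get ODEs for the moments. Concretely, I would posit $\rho_t=\mathcal{N}(\mu_t,\Sigma_t)$ together with a quadratic potential $\Phi_t(x)=\tfrac12 S_t x^2+v_t x+C_t$, where $S_t,v_t$ are scalar unknowns and the additive constant $C_t$ affects nothing through $\nabla\Phi_t$ (it only fixes the gauge). Since $f(x)=(2\Sigma^*)^{-1}(x-\mu^*)^2$ has $f''\equiv 1/\Sigma^*$, the one-dimensional form of \eqref{equ:wnf_kl},
\[
\partial_{xx}(\rho_t\,\partial_{xx}\Phi_t)-\partial_x(\rho_t f''\,\partial_x\Phi_t)-\partial_x(\rho_t f')-\partial_{xx}\rho_t=0,
\]
turns, after the substitution and using $\partial_x\log\rho_t=-(x-\mu_t)/\Sigma_t$, into $\rho_t$ times a polynomial of degree at most two in $x-\mu_t$. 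Dividing by $\rho_t$ and matching the coefficients of $(x-\mu_t)^2$, $(x-\mu_t)$ and $1$ produces the algebraic relations: the quadratic coefficient gives $(S_t-1)\Sigma^*+(S_t+1)\Sigma_t=0$, i.e.\ $S_t=(\Sigma^*-\Sigma_t)/(\Sigma^*+\Sigma_t)$; the linear coefficient forces $(S_t+1)\mu_t+v_t=\mu^*$, hence $v_t=\mu^*-(S_t+1)\mu_t=\mu^*-\tfrac{2\Sigma^*}{\Sigma^*+\Sigma_t}\mu_t$; and the constant coefficient reduces to the same relation as the quadratic one, so the quadratic ansatz genuinely closes the PDE rather than over-determining it. Because here $f''=1/\Sigma^*>0$, the Wasserstein Hessian of KL at a Gaussian is positive definite (cf.\ the Gamma-calculus identity in Example \ref{eg:wnf}), so $\Phi_t^{\operatorname{Newton}}$ is the unique solution of \eqref{equ:wnf_kl} and therefore equals this quadratic.

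With $\Phi_t^{\operatorname{Newton}}$ in hand I would read off the NLD: $dX_t=\partial_x\Phi_t^{\operatorname{Newton}}(X_t)\,dt=(S_tX_t+v_t)\,dt$, and inserting the expressions for $S_t$ and $v_t$ gives the displayed drift. Since this drift is affine in $X_t$, the flow map is affine, so $X_t$ stays Gaussian whenever $X_0$ is; computing the first two moments of $dX_t=(S_tX_t+v_t)\,dt$ (equivalently, substituting the Gaussian ansatz into $\partial_t\rho_t+\partial_x(\rho_t\partial_x\Phi_t)=0$) yields $\dot\mu_t=S_t\mu_t+v_t$ and $\dot\Sigma_t=2S_t\Sigma_t$, which, after plugging in $S_t$ and $v_t$, become exactly $\dot\mu_t=\mu^*-\mu_t$ and $\dot\Sigma_t=2\tfrac{\Sigma^*-\Sigma_t}{\Sigma^*+\Sigma_t}\Sigma_t$. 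This also confirms that $(\mu_t,\Sigma_t)$ obeys an autonomous, self-consistent system; global existence together with $\Sigma_t>0$ follows because the trajectory is trapped between $\Sigma=0$ and $\Sigma=\Sigma^*$ and moves monotonically toward $\Sigma^*$ (alternatively one may invoke Proposition \ref{prop:exist}).

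Finally I would integrate the two scalar ODEs. The mean equation is linear, giving $\mu_t=\mu^*+(\mu_0-\mu^*)e^{-t}$ immediately. The covariance equation is separable: the partial-fraction identity $\tfrac{\Sigma^*+\Sigma}{(\Sigma^*-\Sigma)\Sigma}=\tfrac1\Sigma+\tfrac{2}{\Sigma^*-\Sigma}$ integrates to $\log\Sigma_t-2\log|\Sigma^*-\Sigma_t|=2t+\mathrm{const}$, i.e.\ $\Sigma_t/(\Sigma^*-\Sigma_t)^2=e^{2t}\,\Sigma_0/(\Sigma^*-\Sigma_0)^2$; solving this quadratic in $\Sigma_t$ and selecting the branch compatible with $\Sigma_t|_{t=0}=\Sigma_0$ gives the claimed closed form. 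I expect the main difficulty to be bookkeeping rather than conceptual: one must verify that the three coefficient equations from \eqref{equ:wnf_kl} are mutually consistent (only two being independent), so that the quadratic ansatz is legitimate, and one must track the correct algebraic branch when inverting the covariance relation so that $\Sigma_t$ stays positive and matches the initial data for all $t\ge 0$.
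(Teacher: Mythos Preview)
Your proposal is correct and follows essentially the same route as the paper: posit a Gaussian $\rho_t$ and an affine $\nabla\Phi_t$, match coefficients in the Newton-direction equation to solve for $S_t,v_t$, read off the drift, derive the moment ODEs, and integrate by separation of variables. The only cosmetic difference is that the paper first integrates \eqref{equ:wnf_kl} once in $x$ to obtain the scalar ODE of Proposition~\ref{prop:new_1d} for $u=\Phi'$ (so only two coefficient equations appear), whereas you substitute directly into the second-order PDE and observe that the constant and quadratic coefficients coincide; both lead to the same $S_t,v_t$ and the same closed-form solutions.
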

We present discrete-time particle implementations of Newton's Langevin dynamics in section \ref{sec:par} and numerical examples in section \ref{sec:num}. 

\section{Particle implementation of Wasserstein Newton's method}\label{sec:par}
In this section, 
we design sampling efficient implementations of Wasserstein Newton's meth- od. Focusing on Wasserstein Newton's flow of KL divergence, we introduce a variational formulation for computing the Wasserstein Newton's direction. By restricting the domain of the variational problem in a linear subspace or reproducing kernel Hilbert space (RKHS), we derive sampling efficient algorithms. 
Besides, a hybrid method between Newton's Langevin dynamics and overdamped Langevin dynamics is provided.

We briefly review update rules of Newton's methods and hybrid methods in Euclidean space. In each iteration, the update rule of Newton's method follows
$$
x_{k+1}=x_k+\alpha_kp_k,\quad p_k=-\nabla^2f(x_k)^{-1}\nabla f(x).
$$
Suppose that $f(x)$ is strictly convex. Namely, $\nabla^2f(x)$ is positive definite for all $x\in\mbR^d$. To compute the Newton's direction $p_k$, it is equivalent to solve the following variational problem
$$
\min_{p\in\mathbb{R}^n}~p^T\nabla^2f(x_k)p+2\nabla f(x_k)^Tp.
$$
In practice, the Newton's direction may not lead to the decrease in the objective function, especially when $f(x)$ is non-convex. Nevertheless, the Newton's method often converges when the update is close to the minimizer. One way to overcome this problem is the hybrid method. Consider a hybrid update of the Newton's direction and the gradient's direction
$$
x_{k+1}=x_k+\alpha_kp_k-\alpha_k\gamma \nabla f(x_k),
$$
where $\gamma>0$ is a parameter. 

Following above ideas in Euclidean space, we present a particle implementation of information Newton's method. Here we connect density $\rho_k\in \mcP(\Omega)$ with a particle system $\{x_k^n\}_{i=1}^N$. Namely, we assume that the distribution $\{x_k^n\}_{n=1}^N$ follows $\rho_k(x)$. 
We update each particle by 
$$
x_{k+1}^n= x_k^n+\alpha_k\nabla\hat \Phi_k(x_k^n),\quad i=1,2\dots N.
$$
Here $\hat \Phi_k$ is an approximated solution to the Wasserstein Newton's direction equation \eqref{equ:wnf_kl}. The details on obtaining $\hat \Phi_k$ is left in subsection \ref{ssec:var}. 

In practice, the Wasserstein Newton's direction may not be a descent direction if the update is far away from the target distribution. To overcome this issue, we propose a hybrid update of the Wasserstein Newton's direction and the Wasserstein gradient direction. 

Let $\gamma\geq0$ be a parameter. Here we recall that there are two choices for using the gradient direction. Namely, if we use overdamped Langevin dynamics as the gradient direction, the hybrid update rule follows
\begin{equation}\label{equ:hybrid1}
    x_{k+1}^n = x_k^n+\alpha_k\nabla\hat\Phi_k(x_k^n)-\gamma\alpha_k\nabla f(x_k^n)+\sqrt{2\gamma\alpha_k}z_k,
\end{equation}
where $z_k\sim\mcN(0,I)$. If we use Lagrangian Langevin dynamics as the gradient direction, the hybrid update rule satisfies
\begin{equation}\label{equ:hybrid2}
    x_{k+1}^n = x_k^n+\alpha_k \nabla\hat\Phi_k(x_k^n)-\gamma\alpha_k(\nabla f(x_k^n)+\xi_k(x_k^n)).
\end{equation}
Here $\xi_k$ is an approximation of $\nabla \log \rho_k$. 
For general $\rho_k$ and $\rho^*$, we can approximate $\nabla\log \rho_k$ via kernel density estimation (KDE) \citep{aktst}. Namely, we approximate $\nabla\log \rho_k$ by
$$
\xi_k(x) =\frac{\sum_{n=1}^N\nabla_y k(x,x_k^n)}{\sum_{n=1}^N\nabla k(x,x_k^n)}.
$$
Here $k(x,y)$ is a given positive kernel. A typical choice of $k(x,y)$ is a Gaussian kernel with a bandwidth $h>0$, such that
$$
k(x,y) = (2\pi h)^{-n/2}\exp\pp{-\frac{\|x-y\|^2}{2h}}.
$$

The overall algorithm is summarized in Algorithm \ref{alg:wnewton}.
\begin{algorithm}[ht]
\caption{Wasserstein Newton's method with hybrid update}\label{alg:wnewton}
\begin{algorithmic}[1]
\REQUIRE initial positions $\{x_0^n\}_{n=1}^N$, $\epsilon\geq 0$, step sizes $\alpha_k$, parameters $\lambda_k\geq 0$, maximum iteration $K$.
\STATE Set $k=0$.
\WHILE{$k< K$ and the convergence criterion is not met}
\STATE Compute an approximate solution $\Phi_k$ to \eqref{equ:wnf_kl}.
\STATE Update particle positions by \eqref{equ:hybrid1} or \eqref{equ:hybrid2}.
\STATE Set $k=k+1$.
\ENDWHILE
\end{algorithmic}
\end{algorithm}

\begin{remark}
It worths mentioning that our algorithm corresponds to the following hybrid Langvien dynamics
\begin{equation*}
dX_t=(\nabla\Phi_t-\gamma\nabla f)dt+ \sqrt{2\gamma}dB_t,
\end{equation*}
where $B_t$ is the standrad Brownian motion, $\gamma\geq 0$ is a parameter and $\Phi_t$ satisfies \eqref{equ:wnf_kl}.
\end{remark}

\subsection{Variational formulation for Wasserstein Newton's direction}
\label{ssec:var}
Similar to the Euclidean case, we derive a variational formulation for estimating Wasserstein Newton's direction, and provide the associated particle formulations.
\begin{proposition}\label{prop:var}
Suppose that $\mcH: T^*_{\rho}\mcP(\Omega)\to T_\rho \mcP$ is a linear self-adjoint operator and $\mcH$ is positive definite. Let $u\in T_\rho \mcP$. Then the minimizer of variational problem
$$
\min_{\Phi\in T^*_{\rho}\mcP(\Omega)}J(\Phi)=\int \pp{\Phi\mcH\Phi-2u\Phi} dx,
$$
satisfies $\mcH\Phi=u$, where $\Phi\in T^*_{\rho}\mcP(\Omega)$. 
\end{proposition}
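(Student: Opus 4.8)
The plan is to treat $J$ as a quadratic functional on the affine space $T^*_\rho\mcP(\Omega)$ and compute its first variation directly, in exact analogy with the finite–dimensional problem $\min_p\, p^T\nabla^2 f(x_k)p+2\nabla f(x_k)^Tp$ recalled just above. First I would fix an arbitrary test direction $\psi\in T^*_\rho\mcP(\Omega)$ and expand $J(\Phi+\epsilon\psi)$ in powers of $\epsilon$. Using bilinearity of $(\Phi_1,\Phi_2)\mapsto\int \Phi_1\mcH\Phi_2\,dx$ together with the self-adjointness of $\mcH$ (so that $\int\psi\,\mcH\Phi\,dx=\int\Phi\,\mcH\psi\,dx$), this gives
\[
J(\Phi+\epsilon\psi)=J(\Phi)+2\epsilon\int \psi\,(\mcH\Phi-u)\,dx+\epsilon^2\int \psi\,\mcH\psi\,dx .
\]
Imposing $\frac{d}{d\epsilon}\big|_{\epsilon=0}J(\Phi+\epsilon\psi)=0$ for all $\psi$ yields the Euler--Lagrange condition $\int\psi\,(\mcH\Phi-u)\,dx=0$ for every $\psi\in T^*_\rho\mcP(\Omega)$.

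Next I would deduce $\mcH\Phi=u$ from this condition. The key observation is that $\mcH\Phi$ and $u$ both lie in $T_\rho\mcP(\Omega)$, so $\mcH\Phi-u$ is a mean-zero function; hence the pairing $\int\psi\,(\mcH\Phi-u)\,dx$ is well defined on the quotient $T^*_\rho\mcP(\Omega)=\mcF(\Omega)/\mbR$ (adding a constant to $\psi$ does not change it). Choosing $\psi$ to be the representative $\mcH\Phi-u$ itself forces $\int(\mcH\Phi-u)^2\,dx=0$, hence $\mcH\Phi=u$ pointwise; equivalently one may invoke the fundamental lemma of the calculus of variations.

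Finally, to confirm that a solution of $\mcH\Phi=u$ is genuinely the minimizer rather than an arbitrary stationary point, I would use positive definiteness. If $\Phi_\ast$ solves $\mcH\Phi_\ast=u$, then setting $\Phi=\Phi_\ast$ and $\psi=\Phi-\Phi_\ast$ in the expansion above (completing the square) gives
\[
J(\Phi)=J(\Phi_\ast)+\int(\Phi-\Phi_\ast)\,\mcH(\Phi-\Phi_\ast)\,dx\ \ge\ J(\Phi_\ast),
\]
with equality if and only if $\Phi=\Phi_\ast$ in $T^*_\rho\mcP(\Omega)$, since $\mcH$ is positive definite. Thus $J$ is strictly convex and the stationary point is its unique minimizer, as claimed.

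I do not expect a genuine obstacle here; the only point requiring care is the quotient-space bookkeeping, namely checking that $\mcH\Phi-u$ is mean zero so that testing against cotangent vectors (functions defined up to constants) already forces it to vanish. If one also wants existence of a minimizer, positive definiteness of $\mcH$ yields uniqueness but surjectivity of $\mcH$ onto $T_\rho\mcP(\Omega)$ is an implicit regularity hypothesis on $u$; the proposition only asserts that \emph{the} minimizer solves $\mcH\Phi=u$, so this can be noted in passing.
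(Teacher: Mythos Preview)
Your proposal is correct and follows essentially the same route as the paper: compute the first variation of $J$ (the paper writes this compactly as $\frac{\delta J}{\delta\Phi}=2\mcH\Phi-2u=0$, you do it by expanding $J(\Phi+\epsilon\psi)$), then verify that a solution of $\mcH\Phi=u$ is the minimizer by the identical completing-the-square computation $J(\Phi_\ast+\Psi)=J(\Phi_\ast)+\int\Psi\,\mcH\Psi\,dx\ge J(\Phi_\ast)$ using positive definiteness. Your additional remarks on the quotient-space bookkeeping and on existence versus uniqueness are more careful than the paper's own argument, but they do not change the line of proof.
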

\begin{proof}
Since $\mcH$ is linear and self-adjoint, the optimal solution of satisfies
$$
0 = \frac{\delta J}{\delta \Phi}=2\mcH\Phi -2u. 
$$
Hence, $\Phi$ satisfies $\mcH\Phi=u$. On the other hand, let $\Phi$ satisfy $\mcH\Phi=u$. Then, for any $\Psi\in  T^*_{\rho}\mcP(\Omega)$, it follows
$$
\begin{aligned}
&J(\Phi+\Psi) = \int \pp{(\Phi+\Psi)\mcH(\Phi+\Psi)-2u(\Phi+\Psi)} dx\\
=&\int \pp{\Phi\mcH\Phi-2u\Phi} dx+\int \pp{\Psi\mcH\Psi-2u\Psi-2\Psi\mcH\Phi} dx\\
=&J(\Phi)+\int \Psi\mcH\Psi dx\geq J(\Phi).
\end{aligned}
$$
The last inequality is based on the fact that $\mcH$ is positive definite. Hence, $\Phi$ is the optimal solution to the proposed variational problem. This completes the proof. 
\end{proof}

Suppose that $f$ is strongly convex, or equivalent, $\nabla^2 f(x)$ is positive definite for $x\in \Omega$. Then, the operator $\mcH_E(\rho)$ defined in \eqref{equ:he_kl} is positive definite. In this case, proposition \ref{prop:var} indicates that solving Wasserstein Newton's direction equation \eqref{equ:wnf_kl} is equivalent to optimizing the following variational problem. 
\begin{equation*}
    \min_{\Phi\in T^*_{\rho_k}\mcP(\Omega)}J(\Phi)=\int \pp{\|\nabla^2\Phi\|_F^2+\|\nabla \Phi\|^2_{\nabla^2 f}+2\lra{\nabla f+\nabla\log\rho_k, \nabla \Phi}}\rho_k dx.
\end{equation*}
Here we denote $\|v\|_A^2 = v^TAv$. For possibly non-convex $f$, we consider a regularized problem
\begin{equation}\label{equ:opt_var_reg}
    \min_{\Phi\in T^*_{\rho_k}\mcP(\Omega)}J^\epsilon(\Phi)=\int \pp{\|\nabla^2\Phi\|_F^2+\|\nabla \Phi\|^2_{\nabla^2 f+\epsilon I}+2\lra{\nabla f+\nabla\log\rho_k, \nabla \Phi}}\rho_k dx.
\end{equation}
Here $\epsilon\geq 0$ is a regularization parameter to ensure that $\nabla^2f(x)+\epsilon I$ is positive definite for $x\in \Omega$.

\begin{remark}
Namely, we penalize the objective function by adding the squared norm of $\Phi$ induced by the Wasserstein metric. In other words, 
$$
\min_{\Phi\in T^*_{\rho_k}\mcP(\Omega)}J(\Phi)+\epsilon \int \|\nabla\Phi\|^2\rho_k dx.
$$
\end{remark}
In terms of samples, we can rewrite \eqref{equ:opt_var_reg} into
\begin{equation}\label{equ:opt_var_reg_smp}
\begin{aligned}
\min_{\Phi\in T^*_{\rho_k}\mcP(\Omega)}J^\epsilon(\Phi)=\frac{1}{N}\sum_{n=1}^N\Big(&\|\nabla^2\Phi(x_k^n)\|_F^2+\|\nabla \Phi(x_k^n)\|^2_{\nabla^2 f(x_k^n)+\epsilon I}\\
&+2\lra{\nabla f(x_k^n)+\nabla\log\rho_k(x_k^n), \nabla \Phi(x_k^n)}\Big).
\end{aligned}
\end{equation}
In high dimensional sample space, directly solving \eqref{equ:opt_var_reg_smp} for $\Phi\in T_{\rho_k}^*\mcP(\Omega)$ can be difficult. To deal with this issue, we restrict the functional space of $\Phi$ into a linear subspace $\mcS\subseteq T^*_{\rho_k}\mcP(\Omega)$. An appropriately chosen  $\mcS$ can lead to a closed-form solution to \eqref{equ:opt_var_reg}. For the rest of this section, we discuss two choices of $\mcS$, including finite dimensional affine subspace and reproducing kernel Hilbert space (RKHS).

\subsection{Affine models}
Consider $\mcS=\operatorname{span}\{\psi_i\}_{i=1}^m$, where $\psi_i:\Omega\to R$ are given basis functions. Namely, we assume that $\Phi(x)$ is a linear combination of $\psi_1,\dots \psi_m$, such that
$$
\Phi(x) = \lra{\mfa,\psi(x)}=\sum_{i=1}^ma_i\psi_i(x),
$$
where $\mfa\in\mbR^m$ and $\psi(x)=[\psi_1(x), \psi_2(x),\dots \psi_m(x)]$. 

\begin{proposition}
Suppose that $\Phi(x) = \lra{\mfa,\psi(x)}$. Then, the optimization problem \eqref{equ:opt_var_reg_smp} with the constraint $\Phi\in \mcS$ is equivalent to 
$$
\min_{\mfa\in\mbR^m}J^\epsilon(\mfa)=\mfa^T(\mfB_k+\mfD_k)\mfa+2\mfc_k^T\mfa,
$$
where $\mfB_{k},\mfD_k\in \mbR^{m\times m}$ and $\mfc_{k}\in \mbR^m$. The detailed formulations of $\mfB_{k},
\mfD_k$ and $\mfc_k$ are provided as follows. 
$$
\begin{aligned}
\mfB_{k} = &\frac{1}{N}\sum_{n=1}^N\nabla\psi(x_k^n)(\nabla^2f(x_k^n)+\epsilon I)(\nabla\psi(x_k^n))^T,\\
\mfD(x)_{j_1,j_2} = &\frac{1}{N}\sum_{n=1}^N\tr (\nabla^2\psi_{j_1}(x_k^n)\nabla^2\psi_{j_2}(x_k^n)),\\
\mfc(x)= &\frac{1}{N}\sum_{n=1}^N\nabla \psi(x_k^n)(\nabla f(x_k^n)+\xi_k(x_k^n)).
\end{aligned}
$$
If $\mfB_k+\mfD_k$ is positive definite, the optimal solution follows $\mfa = -(\mfB_k+\mfD_k)^{-1}c_k$. The optimal solution $\hat \Phi$ follows $\hat \Phi(x) = \lra{ \mfa, \psi(x)}.$
\end{proposition}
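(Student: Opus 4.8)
The plan is to substitute the affine ansatz $\Phi(x)=\la \mfa,\psi(x)\ra$ into the particle-averaged functional \eqref{equ:opt_var_reg_smp} and expand each of its three integrand terms as a quadratic or linear form in $\mfa\in\mbR^m$. The starting point is that differentiation is linear, so with the convention that $\nabla\psi(x)\in\mbR^{m\times d}$ denotes the Jacobian whose $i$-th row is $\nabla\psi_i(x)^T$, one has the exact pointwise identities $\nabla\Phi(x)=\nabla\psi(x)^T\mfa$ and $\nabla^2\Phi(x)=\sum_{i=1}^m a_i\nabla^2\psi_i(x)$. Since $\psi\in\mcF(\Omega)^m$ and $J^\epsilon$ depends on $\Phi$ only through $\nabla\Phi$ and $\nabla^2\Phi$, the restriction $\Phi\in\mcS\subseteq T^*_{\rho_k}\mcP(\Omega)=\mcF(\Omega)/\mbR$ imposes no constraint on $\mfa$ (a constant basis function simply contributes a null direction), so the restricted problem is genuinely an unconstrained problem over $\mbR^m$.

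Next I would plug the two identities into each term evaluated at a particle $x_k^n$. For the Hessian term, cyclicity of the trace gives $\|\nabla^2\Phi(x_k^n)\|_F^2=\tr\pp{\textstyle\sum_i a_i\nabla^2\psi_i\,\sum_j a_j\nabla^2\psi_j}=\sum_{i,j}a_ia_j\,\tr(\nabla^2\psi_i(x_k^n)\nabla^2\psi_j(x_k^n))$, and averaging over $n$ yields $\mfa^T\mfD_k\mfa$ with $\mfD_k$ as stated, symmetric by the trace identity. For the weighted gradient term, $\|\nabla\Phi(x_k^n)\|^2_{\nabla^2f(x_k^n)+\epsilon I}=\mfa^T\nabla\psi(x_k^n)(\nabla^2f(x_k^n)+\epsilon I)\nabla\psi(x_k^n)^T\mfa$, whose average is $\mfa^T\mfB_k\mfa$; here $\mfB_k$ is symmetric, and positive semidefinite because $\nabla^2f+\epsilon I$ is. For the coupling term, after substituting the KDE surrogate $\xi_k$ for $\nabla\log\rho_k$ as in \eqref{equ:hybrid2}, $\la \nabla f(x_k^n)+\xi_k(x_k^n),\nabla\Phi(x_k^n)\ra=\mfa^T\nabla\psi(x_k^n)(\nabla f(x_k^n)+\xi_k(x_k^n))$, whose average is $\mfc_k^T\mfa$. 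Summing the three contributions gives exactly $J^\epsilon(\mfa)=\mfa^T(\mfB_k+\mfD_k)\mfa+2\mfc_k^T\mfa$.

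To conclude, $J^\epsilon$ is a finite-dimensional quadratic with Hessian $2(\mfB_k+\mfD_k)$. Under the stated hypothesis that $\mfB_k+\mfD_k$ is positive definite, $J^\epsilon$ is strictly convex and coercive on $\mbR^m$, hence has a unique minimizer, characterized by the first-order condition $\nabla_{\mfa}J^\epsilon(\mfa)=2(\mfB_k+\mfD_k)\mfa+2\mfc_k=0$, i.e. $\mfa=-(\mfB_k+\mfD_k)^{-1}\mfc_k$; the recovered direction is $\hat\Phi(x)=\la\mfa,\psi(x)\ra$. I do not expect a genuine obstacle: the only care needed is to pin down the matrix/index conventions (the shape of $\nabla\psi$ and the placement of transposes) so that the three expansions reproduce precisely the symmetric matrices $\mfB_k,\mfD_k$ and the vector $\mfc_k$ in the statement, and to be explicit that the sole substantive assumption invoked is positive definiteness of $\mfB_k+\mfD_k$ (which one could in turn derive from $\nabla^2f+\epsilon I$ being positive definite together with linear independence of $\{\nabla\psi_i\}$ on the particle cloud, though this is not needed for the statement).
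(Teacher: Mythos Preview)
Your proposal is correct and follows essentially the same route as the paper: substitute $\Phi=\langle\mfa,\psi\rangle$ into the sample-averaged functional, expand the three terms into the quadratic forms $\mfa^T\mfD_k\mfa$, $\mfa^T\mfB_k\mfa$ and the linear form $2\mfc_k^T\mfa$, and then read off the minimizer of the resulting finite-dimensional quadratic. Your treatment is in fact more explicit than the paper's (which only writes out the Frobenius-norm expansion and leaves the $\mfB_k$ and $\mfc_k$ terms implicit), and your remarks on the Jacobian convention and on how positive definiteness of $\mfB_k+\mfD_k$ could be ensured are accurate extras.
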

\begin{proof}
We denote the Jacobian $\nabla\psi(x)\in \mbR^{n\times m}$. As a result, $J(\mfa)$ turns to be
$$
J^\epsilon(\mfa) = \left\{\frac{1}{N}\sum_{n=1}^N\left\|\sum_{j=1}^ma_j\nabla^2\psi_j(x_k^n)\right\|_F^2+\mfa^T\mfB(x
_k^n)\mfa+2\mfa^T \mfc(x_k^n)\right\}.
$$
We can further compute that
\begin{equation*}
\left\|\sum_{j=1}^m\mfa_j\nabla^2\psi_j(x_k^n)\right\|_F^2=\sum_{j_1=1}^m\sum_{j_2=1}^m \mfa_{j_1}\nabla^2\psi_{j_1}(x_k^n)\nabla^2\psi_{j_2}(x_k^n)\mfa_{j_2}= \mfa^T\mfD(x_k^n)\mfa.
\end{equation*}
This completes the proof.
\end{proof}


This affine approximation technique has been used in approximating natural gradient direction in \citep{anpl}. Hence, we call our method affine information Newton's method.

In particular, we set $m=2d$ and consider the basis
$$
\psi_i(x)=x_i,\quad \psi_{i+d}(x)=x_i^2,\quad 1\leq i\leq d.
$$
In other words, we assume that $\Phi(x)$ takes the form $\Phi(x) = \frac{1}{2}x\diag(s)x+b^Tx$, where $s,b\in \mbR^d$. For simplicity, we denote $v_k^n=\nabla f(x_k^n)+\xi_k(x_k^n)$. 

$$
\begin{aligned}
&J^\epsilon(s,b) 
=&\bmbm{s\\b}^T\mfH_k\bmbm{s\\b}+2\bmbm{s\\b}^Tu_k.
\end{aligned}
$$
where we denote $\mfH_k\in \mbR^{2d\times 2d}$ via
$$
\mfH_k=\bmbm{I+\frac{1}{N}\sum_{n=1}^N\diag(x_k^n)(\nabla^2 f(x_k^n)+\epsilon I)\diag(x_k^n) &\frac{1}{N}\sum_{n=1}^N\diag(x_k^n)(\nabla^2 f(x_k^n)+\epsilon I)\\ \frac{1}{N}\sum_{n=1}^N(\nabla^2 f(x_k^n)+\epsilon I)\diag(x_k^n)&\frac{1}{N}\sum_{n=1}^N(\nabla^2 f(x_k^n)+\epsilon I)  },
$$
and $u_k\in \mbR^{2d}$ via
$$
u_k=\bmbm{\frac{1}{N}\sum_{n=1}^N\diag(x_k^n)v_k^n\\\frac{1}{N}\sum_{n=1}^Nv_k^n}.
$$
Hence, the optimal solution for minimizing $J(s,b)$ follows 
$$
\bmbm{s_k\\b_k} = -(\mfH_k)^{-1}u_k.
$$
Hence, the approximate solution $\hat \Phi_k$ computed via the affine method follows
\begin{equation}
\nabla \hat\Phi_k(x) = \diag(s_k) x+b_k.
\end{equation}

The overall algorithm are summarized in Algorithm \ref{alg:aff_newton}. For simplicity, we do not mention the hybrid update.
\begin{algorithm}[ht]
\caption{Wasserstein Newton's method with affine models. }\label{alg:aff_newton}
\begin{algorithmic}[1]
\REQUIRE initial positions $\{x_0^i\}_{i=1}^N$, $\epsilon\geq 0$, step sizes $\alpha_k$, maximum iteration $K$.
\STATE Set $k=0$.
\WHILE{$k< K$ and the convergence criterion is not met}
\STATE Compute $v_k^n=\nabla f(x_k^n)+\xi_k(x_k^n)$. Here $\xi_k$ is an approximation of $\nabla \log \rho_k$. 
\STATE Calculate $\mfH_k$ by
{\footnotesize
$$
\mfH_k=\bmbm{I+\frac{1}{N}\sum_{n=1}^N\diag(x_k^n)(\nabla^2 f(x_k^n)+\epsilon I)\diag(x_k^n) &\frac{1}{N}\sum_{n=1}^N\diag(x_k^n)(\nabla^2 f(x_k^n)+\epsilon I)\\ \frac{1}{N}\sum_{n=1}^N(\nabla^2 f(x_k^n)+\epsilon I)\diag(x_k^n)&\frac{1}{N}\sum_{n=1}^N(\nabla^2 f(x_k^n)+\epsilon I)  },
$$
}
and formulate $u_k$ by
$$
u_k=\bmbm{\frac{1}{N}\sum_{n=1}^N\diag(x_k^n)v_k^n\\\frac{1}{N}\sum_{n=1}^Nv_k^n}.
$$
\STATE Compute $s_k$ and $b_k$ by
$$
\bmbm{s_k\\b_k} = -(\mfH_k)^{-1}u_k.
$$
\STATE Update particle positions by $$x_{k+1}^n = x_k^n+\alpha_k (\diag(s_k) x_k^n+b_k).$$
\STATE Set $k=k+1$.
\ENDWHILE
\end{algorithmic}
\end{algorithm}

When the optimal solution $\Phi_k$ to \eqref{equ:opt_var_reg_smp} is highly non-linear, $\mcS$ in affine methods may not be large enough to approximate $\Phi_k$ well.

\subsection{Kernel models}
In this subsection, we approximate the Wasserstein Newton's direction in kernel models. Specifically, we consider $\mcS$ as the RKHS with an associated kernel function $k(x,y):\mbR^d\times \mbR^d\to \mbR$. Compared to finite-dimensional linear subspace, RKHS can be viewed as with infinitely many feature functions. Detailed description about RKHS and the related norm can be found in \citep{rkhsi}.

To ensure the well-posedness of the optimal solution, we penalize the objective function using the RKHS norm $\|\cdot\|_\mcS$. Hence, we consider a regularized variational problem based on \eqref{equ:opt_var_reg}
\begin{equation}\label{equ:wnewton_k}
\begin{aligned}
\min_{\Phi\in \mcH}&\int \pp{\|\nabla^2\Phi\|_F^2+\|\nabla \Phi\|^2_{\nabla^2 f+\epsilon I}+2\lra{\nabla f, \nabla \Phi+\nabla \log \rho_k}}\rho_k dx+\lambda \|\Phi\|_{\mcS}^2\\
=&\int \pp{\|\nabla^2\Phi\|_F^2+\|\nabla \Phi\|^2_{\nabla^2 f+\epsilon I}+2\lra{\nabla f, \nabla \Phi}-2\Delta \Phi}\rho_k dx+\lambda \|\Phi\|_{\mcS}^2.
\end{aligned}
\end{equation}
In terms of samples, this varitional problem becomes
\begin{equation}\label{equ:wnewton_smp}
\begin{aligned}
\min_{\Phi\in \mcH}\frac{1}{N}\sum_{n=1}^N\Big(&\|\nabla^2\Phi(x_k^n)\|_F^2+\|\nabla \Phi(x_k^n)\|^2_{\nabla^2 f(x_k^n)+\epsilon I}\\
&+2\lra{\nabla f(x_k^n), \nabla \Phi(x_k^n)}-2\Delta \Phi(x_k^n)\Big)+\lambda \|\Phi\|_{\mcS}^2.
\end{aligned}
\end{equation}
From the general representation theorem \citep{agrt}, the minimizer of \eqref{equ:wnewton_smp} can take the form
\begin{equation}\label{kernel_phi}
\Phi(x)=\sum_{n=1}^N\pp{\sum_{i=1}^d\alpha_{i,n} \p_i k(x_k^n,x) +\sum_{j_1=1}^d\sum_{j_2=1}^d\beta_{j_1,j_2,n}\p_{j_1,j_2} k(x_k^n,x) }.
\end{equation}
\begin{proposition}\label{prop:ker}
Let $\Phi$ take the form \eqref{kernel_phi}. Then, \eqref{equ:wnewton_smp} is equivalent to
\begin{equation}\label{equ:kwnewton}
\begin{aligned}
\inf_{\alpha\in \mbR^{Nd},\beta\in \mbR^{Nd^2}} &\bmbm{\alpha\\\beta}^T\bmbm{K^{1,2}\\K^{2,2}}\bmbm{K^{1,2}\\K^{2,2}}^T\bmbm{\alpha\\\beta}+\bmbm{\alpha\\\beta}^T\bmbm{K^{1,1}\\K^{2,1}}H\bmbm{K^{1,1}\\K^{2,1}}^T\bmbm{\alpha\\\beta}\\
&+N\lambda \bmbm{\alpha\\\beta}^T\bmbm{K^{1,1}&K^{1,2}\\K^{2,1}&K^{2,2}}\bmbm{\alpha\\\beta}-2\bmbm{\alpha\\\beta}^T\bmbm{K^{1,1}&K^{1,2}\\K^{2,1}&K^{2,2}}\bmbm{v\\e}.
\end{aligned}
\end{equation}
Here we denote 
$$
v = \bmbm{-\nabla f(x_k^1)\\\vdots\\-\nabla f(x_k^N)}\in \mbR^{Nd},\quad e = \bmbm{\text{vec}(I_d)\\\vdots\\\text{vec}(I_d)}\in \mbR^{Nd^2}, 
$$
$$
H = \bmbm{\nabla^2 f(x_k^1)+\epsilon I&0&\dots&0\\
0&\nabla^2 f(x_k^2)+\epsilon I&\ddots&\vdots\\
\vdots&\ddots&\ddots&0\\
0&\dots&0&\nabla^2 f(x_k^N)+\epsilon I}\in \mbR^{Nd\times Nd},
$$
and
$$
K^{p,q} = \bmbm{K^{p,q}_{1,1}&\dots&K^{p,q}_{1,N}\\
\vdots&\ddots&\vdots\\
K^{p,q}_{N,1}&\dots&K^{p,q}_{N,N}},\quad p,q\in\{1,2\}.
$$
Each $K_{n,n'}^{p,q}$ are defined by
$$
\begin{aligned}
&\pp{K^{1,1}_{n,n'}}_{i,j} = \p_{i,j+d}k(x_k^n,x_k^{n'}),\quad K^{1,1}_{n,n'}\in \mbR^{d\times d},\\
&\pp{K^{1,2}_{n,n'}}_{i,(j_1-1)d+j_2} = \p_{i,j_1+d,j_2+d} k(x_k^n,x_k^{n'}),\quad K^{1,2}_{n,n'}\in \mbR^{d\times d^2},\\
&\pp{K^{2,1}_{n,n'}}_{(j_1-1)d+j_2,i} = \p_{j_1,j_2,i+d} k(x_k^n,x_k^{n'}),\quad K^{2,1}_{n,n'}\in \mbR^{d^2\times d},\\
&\pp{K^{2,2}_{n,n'}}_{(i_1-1)d+i_2,(j_1-1)d+j_2} = \p_{i_1,i_2,j_1+d,j_2+d} k(x_k^n,x_k^{n'}),\quad K^{2,2}_{n,n'}\in \mbR^{d^2\times d^2}.
\end{aligned}
$$
Here we use the notation $\p_{i}k(x,y)=\p_{x_i} k(x,y)$ and $\p_{j+d}=\p_{y_j}k(x,y)$.
The optimal solution follows
$$
\bmbm{\alpha\\\beta} = \pp{\bmbm{K^{1,2}\\K^{2,2}}\bmbm{K^{1,2}\\K^{2,2}}^T+\bmbm{K^{1,1}\\K^{2,1}}H\bmbm{K^{1,1}\\K^{2,1}}^T+N\lambda \bmbm{\alpha\\\beta}^T\bmbm{K^{1,1}&K^{1,2}\\K^{2,1}&K^{2,2}}}^\dagger \bmbm{K^{1,1}&K^{1,2}\\K^{2,1}&K^{2,2}}\bmbm{v\\e}.
$$
Here $\dagger$ denotes the Moore pseudo-inverse. Hence the approximated solution $\hat \Phi_k$ satisfies 
$$
\bmbm{\nabla \hat \Phi_k(x_k^1)\\\vdots\\\nabla \hat \Phi_k(x_k^N)}=K^{1,1}\alpha+K^{1,2}\beta.
$$
\end{proposition}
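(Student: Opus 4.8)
The plan is to substitute the representer ansatz \eqref{kernel_phi} into the objective of \eqref{equ:wnewton_smp} and rewrite each of its four ingredients --- $\|\nabla^2\Phi(x_k^n)\|_F^2$, $\|\nabla\Phi(x_k^n)\|^2_{\nabla^2 f(x_k^n)+\epsilon I}$, the linear part $2\lra{\nabla f(x_k^n),\nabla\Phi(x_k^n)}-2\Delta\Phi(x_k^n)$, and the RKHS penalty $\|\Phi\|_{\mcS}^2$ --- as explicit quadratic or linear forms in the coefficient vector $(\alpha,\beta)\in\mbR^{Nd}\times\mbR^{Nd^2}$, so that \eqref{equ:wnewton_smp} collapses to the finite-dimensional quadratic program \eqref{equ:kwnewton}.

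First I would record the derivative reproducing property in $\mcS$: assuming $k$ is smooth enough (say $k\in C^4$) that the evaluation functionals $\Phi\mapsto\p_i\Phi(p)$ and $\Phi\mapsto\p_{j_1j_2}\Phi(p)$ are bounded on $\mcS$, differentiating the identity $\Phi(x)=\lra{\Phi,k(x,\cdot)}_{\mcS}$ shows that their representers are $\p_i k(p,\cdot)$ and $\p_{j_1j_2}k(p,\cdot)$. This is precisely what makes \eqref{kernel_phi} the minimizer over all of $\mcS$ (generalized representer theorem, \citep{agrt}), and it identifies $\|\Phi\|_{\mcS}^2$ with the quadratic form whose matrix is the Gram matrix of these representers; expanding $\lra{\p_i k(x_k^n,\cdot),\p_j k(x_k^{n'},\cdot)}_{\mcS}=\p_{i,j+d}k(x_k^n,x_k^{n'})$ and the analogous mixed and second-order brackets, and using the symmetry $k(x,y)=k(y,x)$ to align the conventions $\p_i=\p_{x_i}$, $\p_{j+d}=\p_{y_j}$, shows this Gram matrix is exactly the block matrix built from $K^{1,1},K^{1,2},K^{2,1},K^{2,2}$.

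Next, differentiating the explicit formula \eqref{kernel_phi} (no RKHS structure needed here), for each particle $x_k^{n'}$ I obtain $\nabla\Phi(x_k^{n'})$ and $\nabla^2\Phi(x_k^{n'})$ as linear images of $(\alpha,\beta)$; stacking over $n'$ these become $K^{1,1}\alpha+K^{1,2}\beta$ and, in vectorized form, $K^{2,1}\alpha+K^{2,2}\beta$ (again matching blocks via kernel symmetry) --- the first of these is simultaneously the last displayed identity of the proposition --- and $\Delta\Phi(x_k^{n'})=\lra{\text{vec}(I_d),\text{vec}\,\nabla^2\Phi(x_k^{n'})}$ contributes the $e$-block. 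Substituting into \eqref{equ:wnewton_smp}, multiplying through by $N$, and grouping terms then gives exactly \eqref{equ:kwnewton}: the $\|\nabla^2\Phi\|_F^2$-sum yields its first quadratic term, the weighted $\|\nabla\Phi\|^2$-sum the second (with the block-diagonal $H$ of regularized Hessians), the penalty the $N\lambda$-term, and $2\lra{\nabla f,\nabla\Phi}-2\Delta\Phi$ assembles into the linear term. Finally, since $\nabla^2 f(x_k^n)+\epsilon I$ and the Gram matrix are positive semidefinite, the objective in \eqref{equ:kwnewton} is a convex quadratic $z\mapsto z^TAz-2z^Tb$; a minimizer solves $Az=b$, hence $(\alpha;\beta)=A^\dagger b$ as stated, and $\nabla\hat\Phi_k$ at the particles is then read off from the stacking identity above.

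The main obstacle is the rigor and bookkeeping of the first step: one has to check that derivative-evaluation functionals up to second order (since $\nabla^2\Phi$ and $\Delta\Phi$ enter the objective) are bounded on $\mcS$ with the claimed partial-derivative-of-kernel representers, and then carry out the admittedly fiddly index accounting --- including the symmetry relations $K^{1,1}=(K^{1,1})^T$, $K^{2,1}=(K^{1,2})^T$, $K^{2,2}=(K^{2,2})^T$ inherited from $k(x,y)=k(y,x)$ --- that pins each of the four blocks to the right piece of the objective and of the Gram matrix. After that, the expansion of the four terms and the least-squares solve are routine.
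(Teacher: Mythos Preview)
The paper does not supply a proof of this proposition; it is stated and then immediately used. Your plan is correct and is exactly the computation that underlies the statement: substitute the representer ansatz \eqref{kernel_phi}, use the derivative reproducing identities $\lra{\p_i k(p,\cdot),\Phi}_{\mcS}=\p_i\Phi(p)$ and $\lra{\p_i k(p,\cdot),\p_j k(q,\cdot)}_{\mcS}=\p_{i,j+d}k(p,q)$ (and their second-order analogues) to identify $\|\Phi\|_{\mcS}^2$ with the block Gram matrix, stack $\nabla\Phi(x_k^{n'})$ and $\text{vec}\,\nabla^2\Phi(x_k^{n'})$ as $(K^{1,1})^T\alpha+(K^{2,1})^T\beta$ and $(K^{1,2})^T\alpha+(K^{2,2})^T\beta$, invoke the kernel-symmetry relations you already flagged to drop the transposes, and read off the four pieces of \eqref{equ:kwnewton} after multiplying by $N$. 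The convex-quadratic minimization and the pseudoinverse formula are then standard. Nothing is missing beyond the index bookkeeping you have already isolated as the main chore.
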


To solve \eqref{equ:kwnewton} is equivalent to solve a $N(d+d^2)\times N(d+d^2)$ linear system. Moreover, this linear system is potentially to be ill-posed, especially for large $N$ and $d$. Hence, we further restrict $\beta=0$ in \eqref{equ:kwnewton} (this is equivalent to choose a smaller basis in representing $\Phi(x)$). Then,  \eqref{equ:kwnewton} reduces to
\begin{equation}\label{equ:kwnewton_reduce}
\begin{aligned}
\inf_{\alpha\in \mbR^{Nd}} \quad \alpha^T K^{1,2}K^{2,1}\alpha+\alpha^TK^{1,1}HK^{1,1}\alpha+N\lambda \alpha^TK^{1,1}\alpha-2\alpha^T\bmbm{K^{1,1}&K^{1,2}}\bmbm{v\\e}.
\end{aligned}
\end{equation}
The optimal solution follows
$$
\alpha = (K^{1,2}K^{2,1}+K^{1,1}HK^{1,1}+N\lambda K^{1,1})^{-1} \bmbm{K^{1,1}&K^{1,2}}\bmbm{v\\e}.
$$
Denote $\mfC=K^{1,2}K^{2,1}+K^{1,1}HK^{1,1}+N\lambda K^{1,1}$. Hence, the approximate solution $\hat \Phi_k(x_k^n)$ satisfies
\begin{equation}\label{wnf:kernel}
\begin{aligned}
\bmbm{\nabla \hat \Phi_k(x_k^1)\\\vdots\\\nabla \hat \Phi_k(x_k^N)}=K^{1,1}\alpha=K^{1,1} \mfC^{-1}(K^{1,1}v+K^{1,2}e).
\end{aligned}
\end{equation}

In practice, when $N,d$ are large, the computation cost of $K^{1,2}K^{2,1}$ is quite heavy, which is of order $O(N^3d^4)$. Hence, we consider a block-diagonal approximation $\mfC_\text{bd}$ of $\mfC$, which is defined by
$$
\mfC_\text{bd} = \bmbm{C_{1,1}&0&\dots&0\\
0&C_{2,2}&\ddots&\vdots\\
\vdots&\ddots&\ddots&0\\
0&\dots&0&C_{N,N}}.
$$
Here each block $C_{i,i}\in \mbR^{d\times d}$ can be computed by
$$
C_{i,i} =N\lambda K^{1,1}_{i,i}+ \sum_{j=1}^N \pp{K^{1,2}_{i,j} K^{2,1}_{j,i}+ K^{1,1}_{i,j}\nabla^2 f(x_k^j)K^{1,1}_{j,i} }.
$$
The computational cost of $\mfC_\text{bd}$ is $O(N^2d^4)$. We also note that for Gaussian kernel, with $\lambda>0$, $C_{i,i}$ is invertible. Hence, we can compute the approximate solution $\hat \Phi_k(x_k^n)$ by
\begin{equation}\label{wnf:kernel_blk}
\begin{aligned}
\bmbm{\nabla \hat \Phi_k(x_k^1)\\\vdots\\\nabla \hat \Phi_k(x_k^N)}=K^{1,1} \mfC_\text{bd}^{-1}(K^{1,1}v+K^{1,2}e).
\end{aligned}
\end{equation}

The overall algorithm is summarized in Algorithm \ref{alg:ker_newton}.

\begin{algorithm}[ht]
\caption{Wasserstein Newton's method with RKHS. }\label{alg:ker_newton}
\begin{algorithmic}[1]
\REQUIRE initial positions $\{x_0^n\}_{n=1}^N$, $\epsilon\geq 0$, step sizes $\alpha_k$, maximum iteration $K$.
\STATE Set $k=0$.
\WHILE{$k< K$ and the convergence criterion is not met}
\STATE Calculate $H,v,e, K^{1,1}$, $K^{1,2}$ and $K^{2,1}$ in Proposition \ref{prop:ker} based on $\{x_k^n\}_{n=1}^N$. 
\STATE Formulate $\hat \Phi_k(x_k^n)$ via \eqref{wnf:kernel} or \eqref{wnf:kernel_blk}.
\STATE Update particle positions by $$x_{k+1}^n = x_k^n+\alpha_k \nabla \hat \Phi_k(x_k^n).$$
\STATE Set $k=k+1$.
\ENDWHILE
\end{algorithmic}
\end{algorithm}

Besides, we can use a sparse kernel approximation \citep{kwng,ipsof} to further reduce the computational cost. Namely, we assume that $\Phi(x)$ takes the form
\begin{equation}
\Phi(x)=\sum_{m=1}^M\sum_{i=1}^d\alpha_{i,m} \p_i k(z^m,x).
\end{equation}
Here $M\ll N$ and $\{z^m\}_{m=1}^M$ are randomly sampled from $\{x_k^n\}_{n=1}^N$. This can reduce the computational cost to $O(MN^2d^4)$ (or $O(MNd^4)$ if we apply the block-diagonal approximation). 

\begin{remark}
In future works, we expect to find efficient methods to approximate the solution to \eqref{equ:kwnewton} with low computational cost in terms of $N$ and $d$.
\end{remark}
\begin{remark}
We notice that our Wasserstein Newton's method with RKHS is related to Stein variational Newton's method (SVN) \citep{asvnm}. Here SVN restricts the Newton's direction of general transformation map in RKHS, while our method restricts the potential function of gradient transportation map in RKHS. See details in the appendix. We also provide detailed numerical comparison of these methods in section \ref{sec:num}.
\end{remark}

\section{Convergence analysis of Information Newton's method}\label{sec:conv}
In this section, we introduce general update rules of information Newton's method in terms of probability densities and analyze their convergence rates in both distance and objective function value. 

We briefly review the Riemannian structure of probability space as follows. Given a metric tensor $\mcG(\rho)$ and two probability densities $\rho_0,\rho_1\in\mcP(\Omega)$, we denote the distance $\mcD(\rho_0,\rho_1)$ as follows
$$
\mcD(\rho_0,\rho_1)^2 = \inf_{\hat \rho_s,s\in[0,1]}\left\{\int_0^1\int\p_s\hat  \rho_s\mcG(\hat \rho_s)^{-1}\p_s\hat \rho_sdxds:\hat\rho_s|_{s=0}=\rho_0,\hat \rho_s|_{s=1}=\rho_1\right\}.
$$
For the Wasserstein metric, $\mcD(\rho_0,\rho_1)$ is the Wasserstein-2 distance between $\rho_0$ and $\rho_1$. 
Denote the inner product on cotangent space $T_\rho^*\mcP(\Omega)$ by
$$
\lra{\Phi_1,\Phi_2}_\rho = \int \Phi_1\mcG(\rho)^{-1}\Phi_2 dx,\quad \Phi_1,\Phi_2\in T_\rho^*\mcP(\Omega),
$$
and $\|\Phi\|_\rho^2= \lra{\Phi,\Phi}_{\rho}$. And we introduce the definition of the parallelism.
\begin{definition}[Parallelism]
We say that $\tau:T_{\rho_0}\mcP(\Omega)\to T_{\rho_1}\mcP(\Omega)$ is a parallelism from $\rho_0$ to $\rho_1$, if for all $\Phi_1,\Phi_2\in T_{\rho_0}\mcP(\Omega)$, it follows
$$
\lra{\Phi_1,\Phi_2}_{\rho_0} = \lra{\tau \Phi_1,\tau \Phi_2}_{\rho_1}.
$$
\end{definition}

To analyze the convergence rate, we introduce $\nabla^nE(\rho)$. This is a $n$-form on the cotangent space $T_\rho^* \mcP(\Omega)$, which is recursively defined by
$$
\nabla^n E(\rho)(\Phi_1,\dots,\Phi_n) = \left.\frac{\p}{\p s} \nabla^{n-1} E(\Exp_\rho( s\Phi_n))(\tau_s\Phi_1,\dots,\tau_s\Phi_{n-1})\right|_{s=0},
$$
where $\tau_s$ is the parallelism from $\rho$ to $\Exp_\rho( s\Phi_n)$.


\subsection{Convergence analysis in distance}
The general update rule of the information Newton's method follows
\begin{equation}
    \rho_{k+1} = \Exp_{\rho_k}(\alpha_k\Phi_k),\quad \mcH_E(\rho_k)\Phi_k + \mcG(\rho_k)^{-1}\frac{\delta E}{\delta {\rho_k}}=0.
\end{equation}
Here $\alpha_k>0$ is a step size and $\Exp_{\rho_k}(\cdot)$ is the exponential map at $\rho_k$. 

Recall that in the convergence proof of Euclidean Newton methods, it is assumed that $\nabla^2f(x)$ is positive definite around a small neighbour of the optimal solution $x^*$. In the probability space, we assume that the following assumption holds analogously.
\begin{assumption}\label{asmp:1}
Assume that there exists $\zeta, \delta_1,\delta_2,\delta_3>0$, such that for all $\rho$ satisfying $\mcD(\rho,\rho^*)<\zeta$ and $\Phi_1,\Phi_2\in T^*_\rho \mcP(\Omega)$, the following statements hold.
\begin{equation}\label{ass:A1}
\nabla^2E(\rho)(\Phi_1,\Phi_1)\geq \delta_1\|\Phi_1\|_{\rho}^2,\tag{A1}
\end{equation}
\begin{equation}\label{ass:A2}
\nabla^2E(\rho)(\Phi_1,\Phi_1)\leq \delta_2\|\Phi_1\|_{\rho}^2,\tag{A2}
\end{equation}
\begin{equation}\label{ass:A3}
|\nabla^3E(\rho)(\Phi_1,\Phi_1,\Phi_2)|\leq \delta_3\|\Phi_1\|_{\rho}^2\|\Phi_2\|_{\rho}.\tag{A3}
\end{equation}
\end{assumption}

Relying on Assumption \ref{asmp:1}, Theorem \ref{thm:newton} shows the quadratic convergence rate of the Newton's method in the probability space. 

\begin{theorem}\label{thm:newton}
Suppose that Assumption \ref{asmp:1} holds, $\rho_k$ satisfies $\mcD(\rho_k,\rho^*)<\zeta$ and the step size $\alpha_k=1$. Then, we have
$$
\mcD(\rho_{k+1},\rho^*) = O(\mcD(\rho_{k},\rho^*)^2).
$$
\end{theorem}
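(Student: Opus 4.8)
The plan is to mimic the classical proof of quadratic convergence of Newton's method, transplanted to the Riemannian structure of $(\mcP(\Omega),\mcG(\rho))$ via the machinery of exponential maps, parallelisms, and the forms $\nabla^n E(\rho)$ introduced above. First I would fix the iterate $\rho_k$ with $\mcD(\rho_k,\rho^*)<\zeta$ and write the Newton update $\rho_{k+1}=\Exp_{\rho_k}(\Phi_k)$ with $\mcH_E(\rho_k)\Phi_k+\mcG(\rho_k)^{-1}\frac{\delta E}{\delta\rho_k}=0$, i.e. $\Hess E(\rho_k)\mcG(\rho_k)^{-1}\Phi_k = -\mcG(\rho_k)^{-1}\frac{\delta E}{\delta\rho_k}$ using \eqref{equ:hehess}. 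The key is to estimate $\mcD(\rho_{k+1},\rho^*)$. I would introduce the geodesic $\gamma(s)=\Exp_{\rho_k}(s\Psi_k)$ joining $\rho_k$ to $\rho^*$, where $\Psi_k\in T^*_{\rho_k}\mcP(\Omega)$ is the ``logarithm'' with $\|\Psi_k\|_{\rho_k}=\mcD(\rho_k,\rho^*)$, and compare the Newton step $\Phi_k$ with $\Psi_k$.

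The heart of the argument is a Taylor-type expansion of the gradient along this geodesic. Since $\rho^*$ is the minimizer, $\grad E(\rho^*)=0$, equivalently $\mcG(\rho^*)^{-1}\frac{\delta E}{\delta\rho^*}=0$. Expanding the one-form $s\mapsto \nabla E(\gamma(s))$ (the gradient, transported back to $T^*_{\rho_k}$ by the parallelism $\tau_s$) around $s=0$ and using the recursive definition of $\nabla^n E$, I would write, schematically,
\begin{equation*}
0 = \nabla E(\rho^*)(\,\cdot\,) = \nabla E(\rho_k)(\,\cdot\,) + \nabla^2 E(\rho_k)(\Psi_k,\,\cdot\,) + \tfrac12\int_0^1 (1-s)\,\nabla^3 E(\gamma(s))(\tau_s\Psi_k,\tau_s\Psi_k,\tau_s\,\cdot\,)\,ds,
\end{equation*}
where parallelism preserves the norms appearing in (A1)--(A3). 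Pairing with the Newton equation $\nabla^2E(\rho_k)(\Phi_k,\cdot) = -\nabla E(\rho_k)(\cdot)$ and subtracting, I get
\begin{equation*}
\nabla^2 E(\rho_k)(\Phi_k-\Psi_k,\,\cdot\,) = \tfrac12\int_0^1 (1-s)\,\nabla^3 E(\gamma(s))(\tau_s\Psi_k,\tau_s\Psi_k,\tau_s\,\cdot\,)\,ds.
\end{equation*}
Taking the test covector to realize the norm of $\Phi_k-\Psi_k$ and applying (A1) on the left and (A3) on the right yields $\|\Phi_k-\Psi_k\|_{\rho_k}\leq \frac{\delta_3}{2\delta_1}\|\Psi_k\|_{\rho_k}^2 = \frac{\delta_3}{2\delta_1}\mcD(\rho_k,\rho^*)^2$. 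Finally, since $\rho_{k+1}=\Exp_{\rho_k}(\Phi_k)$ and $\rho^*=\Exp_{\rho_k}(\Psi_k)$, a comparison estimate for the exponential map (distances of endpoints controlled by $\|\Phi_k-\Psi_k\|_{\rho_k}$, up to constants depending on curvature bounds which (A1)--(A3) encode near $\rho^*$) gives $\mcD(\rho_{k+1},\rho^*)\leq C\|\Phi_k-\Psi_k\|_{\rho_k}=O(\mcD(\rho_k,\rho^*)^2)$.

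The main obstacle I anticipate is making the ``Riemannian Taylor expansion'' and the exponential-map comparison rigorous in this infinite-dimensional density-manifold setting: one must know that $\Exp_{\rho_k}$ is well-defined and smooth on a ball of radius $\zeta$, that a geodesic joining $\rho_k$ to $\rho^*$ exists and is unique there, that the parallelism $\tau_s$ along it is well-behaved, and that $\mcD(\Exp_\rho(\Phi),\Exp_\rho(\Psi))\le C\|\Phi-\Psi\|_\rho$ for $\Phi,\Psi$ in that ball. For the Wasserstein metric these are delicate regularity issues; I would either invoke them as standing hypotheses implicit in Assumption~\ref{asmp:1} (which already presupposes enough smoothness for $\nabla^2E,\nabla^3E$ to exist and be bounded) or restrict attention to the smooth regime where all the Gamma-calculus identities used earlier hold. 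The algebraic core — the three-line estimate from (A1) and (A3) — is routine once that analytic scaffolding is granted.
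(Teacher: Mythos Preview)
Your Taylor expansion of the gradient one--form along the geodesic from $\rho_k$ to $\rho^*$, combined with the Newton equation to isolate $\nabla^2E(\rho_k)(\Phi_k-\Psi_k,\cdot)$, is exactly the paper's Lemma~\ref{lem:tay2} and the core manipulation inside Proposition~\ref{prop:est0}. The resulting bound $\|\Phi_k-\Psi_k\|_{\rho_k}=O(\|\Psi_k\|_{\rho_k}^2)$ via \eqref{ass:A1} and \eqref{ass:A3} is correct and matches the paper.

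Where you diverge from the paper is the final step, and there is a small misconception there. You propose to close the argument with a Lipschitz estimate $\mcD(\Exp_\rho(\Phi),\Exp_\rho(\Psi))\le C\|\Phi-\Psi\|_\rho$ and assert that \eqref{ass:A1}--\eqref{ass:A3} ``encode'' the needed curvature bounds. They do not: those are hypotheses on the functional $E$, not on the Riemannian geometry of $(\mcP(\Omega),\mcG)$, and no amount of control on $\nabla^2E$, $\nabla^3E$ implies a sectional--curvature or Lipschitz bound for $\Exp$. The paper sidesteps this by a different device: it sets $T_k=\Exp_{\rho_k}^{-1}(\rho^*)$ (your $\Psi_k$), lets $\tau$ be the parallelism from $\rho_k$ to $\rho_{k+1}$, and writes the exact decomposition $T_k=\tau^{-1}T_{k+1}+\Phi_k+R_k$. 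Since parallelism is isometric, $\mcD(\rho_{k+1},\rho^*)=\|T_{k+1}\|_{\rho_{k+1}}=\|\tau^{-1}T_{k+1}\|_{\rho_k}$, and Proposition~\ref{prop:est0} bounds this through the Hessian exactly as you did for $\|\Phi_k-\Psi_k\|$, picking up an extra $\frac{\delta_2}{\delta_1}\|R_k\|_{\rho_k}$. The residual $R_k$ is then shown (Lemma~\ref{lem:est}) to be $O(\|T_k\|_{\rho_k}^2)$ by combining three separate first--order expansions of $\Exp$ and $\tau$. This is still an implicit smoothness hypothesis on the geometry, but it is a pointwise second--order remainder estimate rather than a global Lipschitz bound on $\Exp$, and it does not need Assumption~\ref{asmp:1} at all. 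Your route is conceptually cleaner but packages the geometric difficulty into a single black box; the paper unpacks it into the residual $R_k$ and controls that term directly.
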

We present a sketch of the proof. For simplicity, we denote $T_k=\Exp_{\rho_k}^{-1}(\rho^*)$.
\begin{proposition}\label{prop:est0}
Suppose that Assumption \ref{asmp:1} holds. Let $\tau$ be the parallelism from $\rho_k$ to $\rho_{k+1}$. There exists a unique $R_k\in T_{\rho_k}^*\mcP(\Omega)$ such that
$$
T_k=\tau^{-1}T_{k+1} +\Phi_k+R_k.
$$
Then, we have
$$
\|T_{k+1}\|_{\rho_{k+1}}\leq \frac{\delta_3}{\delta_1}\|T_k\|_{\rho_k}^2+\frac{\delta_2}{\delta_1}\|R_k\|_{\rho_k}.
$$
\end{proposition}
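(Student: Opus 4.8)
The plan is to analyze the Newton step by comparing the exact solution $\Phi_k$ of the Newton equation $\mcH_E(\rho_k)\Phi_k + \mcG(\rho_k)^{-1}\frac{\delta E}{\delta\rho_k}=0$ with the ``exact'' log-map displacement $T_k=\Exp^{-1}_{\rho_k}(\rho^*)$. The starting point is that along the geodesic from $\rho_k$ to $\rho^*$ (parametrized by $s\in[0,1]$ with velocity $T_k$), the gradient of $E$ vanishes at $s=1$ since $\rho^*$ is the minimizer. Using the definition of $\nabla^n E$ via parallel transport, I would Taylor-expand the one-form $\Phi\mapsto \nabla E(\rho^*)(\tau_1\Phi)$ in reverse: writing the fundamental theorem of calculus along the geodesic, $0 = \nabla E(\rho^*)(\tau_1\Phi) = \nabla E(\rho_k)(\Phi) + \nabla^2 E(\rho_k)(T_k,\Phi) + \tfrac12\nabla^3 E(\rho_\xi)(T_k,T_k,\Phi)$ for some intermediate point, for all test cotangent vectors $\Phi$ (transported appropriately). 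This is the density-space analogue of the identity $0=\nabla f(x^*)=\nabla f(x_k)+\nabla^2 f(x_k)(x^*-x_k)+\tfrac12\nabla^3 f(\xi)(x^*-x_k,x^*-x_k,\cdot)$ in Euclidean Newton analysis.

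Next I would convert the Newton equation into the same ``test against $\Phi$'' form: by the relation $\mcH_E(\rho)=\Hess E(\rho)\mcG(\rho)^{-1}$ from \eqref{equ:hehess} and the definition of $\nabla^2 E$, the Newton equation is equivalent to $\nabla^2 E(\rho_k)(\Phi_k,\Phi) = -\nabla E(\rho_k)(\Phi)$ for all $\Phi\in T^*_{\rho_k}\mcP(\Omega)$. Subtracting this from the Taylor identity above gives $\nabla^2 E(\rho_k)(T_k-\Phi_k,\Phi) = \tfrac12\nabla^3 E(\rho_\xi)(T_k,T_k,\Phi)$ modulo the parallel-transport discrepancy between evaluating $\nabla^3E$ at $\rho_k$ versus at the intermediate geodesic point; this discrepancy, together with any higher-order remainder, is exactly what gets absorbed into $R_k$ — so the defining relation $T_k=\tau^{-1}T_{k+1}+\Phi_k+R_k$ should be read as: $\tau^{-1}T_{k+1}$ is the leading-order error $T_k-\Phi_k$ coming from curvature of the third derivative, and $R_k$ is the genuinely higher-order correction. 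I would make the existence and uniqueness of $R_k$ follow simply from the fact that $\Phi_k$ and $T_k$ and $\tau^{-1}T_{k+1}$ are all well-defined elements of the Hilbert space $T^*_{\rho_k}\mcP(\Omega)$ once $\rho^*$, $\rho_k$, $\rho_{k+1}$ are fixed, so $R_k$ is just their difference.

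To get the stated bound, test the identity $\nabla^2 E(\rho_k)(\tau^{-1}T_{k+1}+R_k,\Phi)=\tfrac12\nabla^3E(\cdot)(T_k,T_k,\Phi)$ against $\Phi=\tau^{-1}T_{k+1}$. The left side is at least $\delta_1\|\tau^{-1}T_{k+1}\|_{\rho_k}^2 - \delta_2\|R_k\|_{\rho_k}\|\tau^{-1}T_{k+1}\|_{\rho_k}$ using \eqref{ass:A1} for the $\tau^{-1}T_{k+1}$ part and \eqref{ass:A2} (Cauchy--Schwarz in the $\nabla^2 E$ inner product) for the $R_k$ cross term; the right side is at most $\tfrac12\delta_3\|T_k\|_{\rho_k}^2\|\tau^{-1}T_{k+1}\|_{\rho_k}$ by \eqref{ass:A3}. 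Dividing through by $\|\tau^{-1}T_{k+1}\|_{\rho_k}$ and using that $\tau$ is a parallelism (so $\|\tau^{-1}T_{k+1}\|_{\rho_k}=\|T_{k+1}\|_{\rho_{k+1}}$) yields $\|T_{k+1}\|_{\rho_{k+1}}\le \tfrac{\delta_3}{\delta_1}\|T_k\|_{\rho_k}^2 + \tfrac{\delta_2}{\delta_1}\|R_k\|_{\rho_k}$ — up to the factor $\tfrac12$, which I expect is cleaned up in the paper's precise bookkeeping of where the $R_k$ absorption line is drawn. The main obstacle is making the geodesic Taylor expansion of $\nabla E$ rigorous in infinite dimensions: one must justify differentiating the one-form $s\mapsto \nabla E(\Exp_{\rho_k}(sT_k))(\tau_s\Phi)$ twice, control the remainder in the operator norm induced by $\|\cdot\|_\rho$, and verify that the parallel-transport comparison of $\nabla^3E$ at different base points is itself of the order already controlled by \eqref{ass:A3} within the ball $\mcD(\rho,\rho^*)<\zeta$ — i.e., that the geodesic stays in this ball, which should follow since $\|\tau^{-1}T_{k+1}\|$ will turn out to be of order $\|T_k\|^2$ and hence $\rho_{k+1}$, and the whole interpolating geodesic, remain within radius $\zeta$ for $\rho_k$ close enough to $\rho^*$.
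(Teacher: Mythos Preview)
Your proposal is correct and mirrors the paper's proof: the Taylor expansion of $\nabla E$ along the geodesic to $\rho^*$ is exactly the paper's Lemma~\ref{lem:tay2}, and the step of testing against $\tau^{-1}T_{k+1}$ with \eqref{ass:A1}--\eqref{ass:A3} and Cauchy--Schwarz (Lemma~\ref{lem:cs}) is identical. On the factor $\tfrac12$: the paper also obtains it but simply drops it for the stated (non-sharp) bound; and note that $R_k$ is purely the algebraic difference $T_k-\tau^{-1}T_{k+1}-\Phi_k$, not a Taylor remainder---your narrative about it ``absorbing discrepancies'' is slightly off, though your eventual definition and use of it are correct.
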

In order to provide an estimation on $\|R_k\|_{\rho_k}$, we introduce Lemma \ref{lem:est}.

\begin{lemma}\label{lem:est}
For all $\Psi\in\mcT_{\rho_k}^*\mcP(\Omega)$, it follows
$$
\int \Psi\mcG(\rho_k)^{-1}R_kdx = O(\|\Psi\|_{\rho_k}\|T_k\|_{\rho_k}^2).
$$
\end{lemma}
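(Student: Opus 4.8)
The plan is to prove the stronger bound $\|R_k\|_{\rho_k}=O(\|T_k\|_{\rho_k}^2)$; Lemma~\ref{lem:est} then follows immediately from Cauchy--Schwarz, since $\int\Psi\,\mcG(\rho_k)^{-1}R_k\,dx=\langle\Psi,R_k\rangle_{\rho_k}$ and $|\langle\Psi,R_k\rangle_{\rho_k}|\le\|\Psi\|_{\rho_k}\|R_k\|_{\rho_k}$. I abbreviate $\|\cdot\|_k=\|\cdot\|_{\rho_k}$ and $\langle\cdot,\cdot\rangle_k=\langle\cdot,\cdot\rangle_{\rho_k}$; all $\Exp$ and $\log$ objects are taken in the cotangent convention of Section~\ref{sec:conv}, so $T_k,\Phi_k,R_k\in T^*_{\rho_k}\mcP(\Omega)$ and the parallelism $\tau$ is an isometry of cotangent spaces. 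Two facts are used repeatedly: $\nabla^1E(\rho)(\Psi)=\langle\delta E/\delta\rho,\Psi\rangle_\rho$ with $\nabla^1E(\rho^*)=0$; and $\nabla^2E(\rho)(\Psi,\eta)=\int\Psi\,\mcH_E(\rho)\eta\,dx=\langle\Psi,L_\rho\eta\rangle_\rho$, where $L_\rho:=\mcG(\rho)\,\mcH_E(\rho):T^*_\rho\mcP(\Omega)\to T^*_\rho\mcP(\Omega)$ is self-adjoint for $\langle\cdot,\cdot\rangle_\rho$ and, for $\mcD(\rho,\rho^*)<\zeta$, satisfies $\delta_1\|\eta\|_\rho^2\le\langle\eta,L_\rho\eta\rangle_\rho\le\delta_2\|\eta\|_\rho^2$ by \eqref{ass:A1}--\eqref{ass:A2}.

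\emph{Step 1 (a priori sizes).} Applying the fundamental theorem of calculus to $s\mapsto\nabla^1E(c(s))(\tau_s\Psi)$ along the geodesic $c(s)=\Exp_{\rho_k}(sT_k)$, $s\in[0,1]$, from $\rho_k$ to $\rho^*$ (so $\dot c(s)=\tau_sT_k$ and $\mcD(c(s),\rho^*)=(1-s)\mcD(\rho_k,\rho^*)<\zeta$), using $\nabla^1E(\rho^*)=0$ and bounding the integrand $\nabla^2E(c(s))(\tau_s\Psi,\tau_sT_k)$ by $\delta_2\|\Psi\|_k\|T_k\|_k$ via \eqref{ass:A2}, gives $\|\delta E/\delta\rho_k\|_k\le\delta_2\|T_k\|_k$. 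Pairing the Newton equation $\mcH_E(\rho_k)\Phi_k=-\mcG(\rho_k)^{-1}\delta E/\delta\rho_k$ with $\Phi_k$, bounding the left side below by $\delta_1\|\Phi_k\|_k^2$ with \eqref{ass:A1} and the right side by Cauchy--Schwarz, yields $\|\Phi_k\|_k\le(\delta_2/\delta_1)\|T_k\|_k$. Consequently $\mcD(\rho_{k+1},\rho^*)\le\|\Phi_k\|_k+\|T_k\|_k=O(\|T_k\|_k)$, so $\|T_{k+1}\|_{\rho_{k+1}}=O(\|T_k\|_k)$, and after shrinking $\zeta$ if necessary $\rho_{k+1}$ also lies where Assumption~\ref{asmp:1} holds.

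\emph{Step 2 (Newton cancellation).} Carrying the expansion of Step~1 one order further --- subtracting the $s=0$ value and applying the fundamental theorem of calculus once more, with \eqref{ass:A3} controlling $\nabla^3E(c(u))(\tau_u\Psi,\tau_uT_k,\tau_uT_k)$ (permuting its arguments costs only curvature-times-gradient terms, which by Step~1 are already $O(\|\Psi\|_k\|T_k\|_k^2)$) --- gives $\nabla^2E(\rho_k)(\Psi,T_k)=-\nabla^1E(\rho_k)(\Psi)+O(\|\Psi\|_k\|T_k\|_k^2)$. Since $\nabla^2E(\rho_k)(\Psi,\Phi_k)=-\nabla^1E(\rho_k)(\Psi)$ is the Newton equation tested against $\Psi$, subtraction yields $\langle\Psi,L_{\rho_k}(T_k-\Phi_k)\rangle_k=\nabla^2E(\rho_k)(\Psi,T_k-\Phi_k)=O(\|\Psi\|_k\|T_k\|_k^2)$ for every $\Psi$; taking the supremum over $\|\Psi\|_k\le1$ gives $\|L_{\rho_k}(T_k-\Phi_k)\|_k=O(\|T_k\|_k^2)$, and since $L_{\rho_k}$ is self-adjoint and bounded below by $\delta_1$, $\|T_k-\Phi_k\|_k\le\delta_1^{-1}\|L_{\rho_k}(T_k-\Phi_k)\|_k=O(\|T_k\|_k^2)$.

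\emph{Step 3 (closing the triangle), and the main obstacle.} Write $R_k=(T_k-\Phi_k)-\tau^{-1}T_{k+1}$; Step~2 controls the first summand, so it remains to show $\|\tau^{-1}T_{k+1}\|_k=\|T_{k+1}\|_{\rho_{k+1}}=O(\|T_k\|_k^2)$. This is the only step requiring genuinely geometric input: a second-order Taylor expansion of the exponential map on the compact ball $\{\mcD(\cdot,\rho^*)\le\zeta\}$ (together with a uniform curvature bound there) gives the hinge estimate $\|\tau^{-1}T_{k+1}-(T_k-\Phi_k)\|_k=O\big(\|\Phi_k\|_k(\|\Phi_k\|_k+\mcD(\rho_{k+1},\rho^*))\big)$, which by Step~1 is $O(\|T_k\|_k^2)$; combined with $\|T_k-\Phi_k\|_k=O(\|T_k\|_k^2)$ this gives $\|\tau^{-1}T_{k+1}\|_k=O(\|T_k\|_k^2)$ and hence $\|R_k\|_k=O(\|T_k\|_k^2)$. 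I expect this hinge estimate to be the hard part: the deviation of $\Exp$ and parallel transport on the density manifold from flat behavior is not encoded in the ``energetic'' quantities $\nabla^nE$, so one must invoke standard Riemannian second-order expansions (e.g.\ \citep[Ch.~1]{oaorm}) or check them directly in the Wasserstein and Fisher--Rao settings; everything in Steps~1--2 is routine Taylor expansion with the remainders absorbed by Assumption~\ref{asmp:1}.
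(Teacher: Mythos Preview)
Your approach is correct in substance but runs in the opposite direction from the paper and contains a detour. The paper proves the weak estimate $\int\Psi\,\mcG(\rho_k)^{-1}R_k\,dx=O(\|\Psi\|_{\rho_k}\|T_k\|_{\rho_k}^2)$ directly, by testing the trivial identity $(\rho^*-\rho_k)-(\rho^*-\rho_{k+1})-(\rho_{k+1}-\rho_k)=0$ against $\Psi$ and replacing each $\int\Psi(\rho_b-\rho_a)\,dx$ by its first-order Taylor expansion along the connecting geodesic; the three first-order terms are precisely $\langle\Psi,T_k\rangle_{\rho_k}$, $\langle\Psi,\tau^{-1}T_{k+1}\rangle_{\rho_k}$, and $\langle\Psi,\Phi_k\rangle_{\rho_k}$, so what survives after cancellation is $\langle\Psi,R_k\rangle_{\rho_k}$ plus second-order remainders. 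Only afterwards does the paper set $\Psi=R_k$ to obtain $\|R_k\|_{\rho_k}=O(\|T_k\|_{\rho_k}^2)$.

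You instead aim for $\|R_k\|_{\rho_k}=O(\|T_k\|_{\rho_k}^2)$ directly via a Riemannian hinge estimate. That works, but notice your hinge estimate \emph{is} already the desired bound, since $\tau^{-1}T_{k+1}-(T_k-\Phi_k)=-R_k$; once you have it together with the Step~1 sizes, the lemma follows immediately. Your Step~2 is therefore unnecessary here: it independently proves $\|T_k-\Phi_k\|_{\rho_k}=O(\|T_k\|_{\rho_k}^2)$, and your Step~3 then combines this with the hinge to deduce $\|T_{k+1}\|_{\rho_{k+1}}=O(\|T_k\|_{\rho_k}^2)$ --- but that is Theorem~\ref{thm:newton}, the \emph{consequence} of the lemma via Proposition~\ref{prop:est0}, not an ingredient of its proof. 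The paper's route is slightly more elementary (only first-order Taylor of the exponential map along each of three segments, plus a first-order estimate on the parallelism, rather than a packaged second-order hinge lemma), while yours effectively folds Proposition~\ref{prop:est0} and Lemma~\ref{lem:est} into a single argument.
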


Taking $\Psi=R_k$ in Lemma \ref{lem:est} yields $\|R_k\|_{\rho_k}=O(\|T_k\|_{\rho_k}^2)$. Because the geodesic curve has constant speed \citep{aitdm}, $\|T_k\|_{\rho_k}^2=\mcD(\rho_k,\rho^*)^2$. As a result, we have
$$
 \mcD(\rho_{k+1},\rho^*)\leq \frac{\delta_2}{\delta_1}\mcD(\rho_{k},\rho^*)^2+\frac{\delta_3}{\delta_1}\|R_k\|_{\rho_k}=O(\mcD(\rho_k,\rho^*)^2).
$$


\subsection{Convergence analysis in objective function value}
We next analyze the convergence rate based on our approximation methods in section \ref{sec:par}. In practice, we use the approximated solution $ \Phi_k$ to update $\rho_k$. Here $ \Phi_k$ is the solution to the variational problem
\begin{equation}\label{var_s}
\inf_{\Phi\in \mcS} \int \Phi \mcH_E(\rho_k) \Phi  dx+2\int \Phi \mcG(\rho_k)^{-1}\frac{\delta E}{\delta {\rho_k}}dx+\lambda \int \Phi \mcR_\mcS \Phi dx.
\end{equation}
Here $\mcH$ is a linear subspace of $\mcF(\Omega)$, $\lambda\geq 0$ is a regularization parameter and $ \int \Phi \mcR_\mcS \Phi dx$ is a regularization term in $\mcS$. For instance, if $\mcS$ is an RKHS, then $ \int \Phi \mcR_\mcH \Phi dx$ can be the squared norm of RKHS, i.e., $\|\Phi\|_{\mcS}^2$. 

Suppose that $P:T_{\rho_k}^*\mcP(\Omega)\to \mcS$ is a projection operator from $T_{\rho_k}^*\mcP(\Omega)$ to $\mcS$ and $P^*:\mcS\to T_{\rho_k}\mcP(\Omega)$ is its adjoint operator. Then, we can write $\hat \Phi_k$ in the closed-form formulation:
\begin{equation}\label{equ:hat_phik}
 \Phi_k =- P(P^*\mcH_E(\rho_k) P+\mcR_\mcS)^{-1} P^* \mcG(\rho_k)^{-1}\frac{\delta E}{\delta {\rho_k}}.
\end{equation}
For simplicity, we use the following notations.
\begin{equation}
g_k=\mcG(\rho_k)^{-1}\frac{\delta E}{\delta {\rho_k}},\quad \mcH_{E,P}=P(P^*\mcH_E(\rho_k) P+\mcR_\mcS)^{-1} P^*.
\end{equation}
For the subspace $\mcS$ and the regularization term $\lambda \int \Phi \mcR_\mcS \Phi dx$, we further assume that the following three statements hold. 
\begin{assumption}\label{asmp:2}
There exists $\epsilon_1\geq 0$, for all $\rho_k$ satisfying $\mcD(\rho_k,\rho^*)<\zeta$, such that
\begin{equation}\label{ass:A4}
\begin{aligned}
&\left|\int g_k(\mcH_{E,P} -\mcH_{E}(\rho_k)^{-1})g_k dx \right|
\leq& \epsilon_1\int g_k\mcH_{E}(\rho_k)^{-1} g_k dx.
\end{aligned}\tag{A4}
\end{equation}

There exists $\epsilon_2\geq 0$, for all $\rho_k$ satisfying $\mcD(\rho_k,\rho^*)<\zeta$, such that
\begin{equation}\label{ass:A5}
\begin{aligned}
&\left|\int g_k(\mcH_{E,P} \mcH_E(\rho_k)\mcH_{E,P}-\mcH_{E,P})g_kdx \right|
\leq& \epsilon_2\int g_k \mcH_{E,P} g_k dx.
\end{aligned}\tag{A5}
\end{equation}

There exists $\delta_4\geq 0$, for all $\rho_k$ satisfying $\mcD(\rho_k,\rho^*)<\zeta$, such that
\begin{equation}\label{ass:A6}
\norm{ \mcH_{E,P} \mcG(\rho)^{-1}\Phi }_{\rho_k}\leq \delta_4 \norm{ \Phi }_{\rho_k}.\tag{A6}
\end{equation}

\end{assumption}
The update rule in terms of density follows
$$
\rho_{k+1} = \Exp_{\rho_k}(\alpha_k \Phi_k).
$$
\begin{theorem}\label{thm:conv_e}
Under Assumption \ref{asmp:1} and \ref{asmp:2}, for $\rho_k$ satisfying $\mcD(\rho_k,\rho^*)<\zeta$, with $\alpha_k=1$, we have the linear convergence rate
\begin{equation*}
E(\rho_{k+1}) - E(\rho^*)\leq (\epsilon_1+\epsilon_2+\epsilon_1\epsilon_2)(E(\rho_{k}) - E(\rho^*))+\mcO((E(\rho_{k}) - E(\rho^*))^{3/2}).
\end{equation*}
\end{theorem}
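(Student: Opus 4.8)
The plan is to transplant the classical analysis of an inexact (damped, step size one) Newton method to the density manifold, using the geodesic Taylor calculus built from the forms $\nabla^nE(\rho)$. Write $g_k=\mcG(\rho_k)^{-1}\frac{\delta E}{\delta\rho_k}$ and $T_k=\Exp_{\rho_k}^{-1}(\rho^*)$, and recall the identifications $\nabla E(\rho_k)(\Phi)=\int\Phi\, g_k\,dx$ and $\nabla^2E(\rho_k)(\Phi,\Phi)=\int\Phi\,\mcH_E(\rho_k)\Phi\,dx$ (the latter from Proposition~\ref{prop:php}). The update is $\rho_{k+1}=\Exp_{\rho_k}(\Phi_k)$ with $\Phi_k=-\mcH_{E,P}\,g_k$ by \eqref{equ:hat_phik}, and $\mcH_{E,P}$ is self-adjoint and positive semidefinite (by \eqref{ass:A1} and nonnegativity of $\mcR_\mcS$). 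First I would expand $s\mapsto E(\Exp_{\rho_k}(s\Phi_k))$ to second order; the remainder is $\nabla^3E$ evaluated along the geodesic against three parallel transports of $\Phi_k$, which \eqref{ass:A3} bounds since the parallelism preserves $\|\cdot\|_{\rho_k}$, giving
\begin{equation*}
E(\rho_{k+1})-E(\rho_k)=-\int g_k\,\mcH_{E,P}\,g_k\,dx+\tfrac12\int g_k\,\mcH_{E,P}\,\mcH_E(\rho_k)\,\mcH_{E,P}\,g_k\,dx+\mcO\!\big(\|\Phi_k\|_{\rho_k}^3\big).
\end{equation*}
By \eqref{ass:A6} with $\Phi=\frac{\delta E}{\delta\rho_k}$ we have $\|\Phi_k\|_{\rho_k}\le\delta_4\|\frac{\delta E}{\delta\rho_k}\|_{\rho_k}$, which near $\rho^*$ is $\mcO(\mcD(\rho_k,\rho^*))$, so the remainder is $\mcO(\mcD(\rho_k,\rho^*)^3)$.

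Next I would expand $E$ and its gradient along the geodesic from $\rho_k$ to $\rho^*$. Using $\nabla E(\rho^*)=0$ and \eqref{ass:A3}, testing the gradient expansion against an arbitrary $\Psi\in T^*_{\rho_k}\mcP(\Omega)$ yields the Newton--Kantorovich-type estimate $\|T_k+\mcH_E(\rho_k)^{-1}g_k\|_{\rho_k}=\mcO(\|T_k\|_{\rho_k}^2)$, i.e.\ the geodesic aimed at $\rho^*$ agrees to second order with the exact Newton direction. Substituting this into the second-order expansion of $E(\rho^*)$ around $\rho_k$ gives
\begin{equation*}
E(\rho_k)-E(\rho^*)=\tfrac12\int T_k\,\mcH_E(\rho_k)\,T_k\,dx+\mcO(\|T_k\|_{\rho_k}^3)=\tfrac12\int g_k\,\mcH_E(\rho_k)^{-1}g_k\,dx+\mcO(\|T_k\|_{\rho_k}^3).
\end{equation*}
Combined with \eqref{ass:A1}--\eqref{ass:A2} this also produces the equivalences $E(\rho_k)-E(\rho^*)\asymp\|T_k\|_{\rho_k}^2=\mcD(\rho_k,\rho^*)^2\asymp\int g_k\mcH_E(\rho_k)^{-1}g_k\,dx$, which I will use at the end to rewrite every $\mcO(\mcD(\rho_k,\rho^*)^3)$ as $\mcO((E(\rho_k)-E(\rho^*))^{3/2})$.

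Adding the two displayed identities and regrouping exactly as in the Euclidean computation $\tfrac12C-B+\tfrac12BHB$ (self-adjointness of $\mcH_{E,P}$ keeps everything as quadratic forms in $g_k$):
\begin{equation*}
E(\rho_{k+1})-E(\rho^*)=-\tfrac12\int g_k\big(\mcH_{E,P}-\mcH_E(\rho_k)^{-1}\big)g_k\,dx+\tfrac12\int g_k\big(\mcH_{E,P}\mcH_E(\rho_k)\mcH_{E,P}-\mcH_{E,P}\big)g_k\,dx+\mcO(\mcD(\rho_k,\rho^*)^3).
\end{equation*}
Bounding the first integral with \eqref{ass:A4} and the second with \eqref{ass:A5}, and then using \eqref{ass:A4} once more to get $\int g_k\mcH_{E,P}g_k\,dx\le(1+\epsilon_1)\int g_k\mcH_E(\rho_k)^{-1}g_k\,dx$, collapses the principal part to $\tfrac12(\epsilon_1+\epsilon_2+\epsilon_1\epsilon_2)\int g_k\mcH_E(\rho_k)^{-1}g_k\,dx$. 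Finally, substituting $\tfrac12\int g_k\mcH_E(\rho_k)^{-1}g_k\,dx=E(\rho_k)-E(\rho^*)+\mcO(\mcD(\rho_k,\rho^*)^3)$ and converting the error with the equivalences above gives the stated bound.

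I expect the main obstacle to be the second step: establishing $\|T_k+\mcH_E(\rho_k)^{-1}g_k\|_{\rho_k}=\mcO(\|T_k\|_{\rho_k}^2)$ using only the third-order control \eqref{ass:A3}, and more broadly making the geodesic Taylor expansions with controlled remainders rigorous on the density manifold --- in particular checking that the geodesic segments involved stay inside the ball $\mcD(\cdot,\rho^*)<\zeta$ where the assumptions hold (which forces one to shrink $\zeta$ slightly, using that $\|\Phi_k\|_{\rho_k}$ and $\|T_k\|_{\rho_k}$ are themselves $\mcO(\zeta)$), and tracking that $g_k\in T_{\rho_k}\mcP(\Omega)$ while $\frac{\delta E}{\delta\rho_k}\in T^*_{\rho_k}\mcP(\Omega)$, so that the Cauchy--Schwarz steps are applied in the correct inner product. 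Once the identities of the first two steps are in hand, the regrouping and the application of \eqref{ass:A4}--\eqref{ass:A5} are purely algebraic and should be routine.
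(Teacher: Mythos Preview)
Your proposal is correct and follows essentially the same route as the paper's proof: a second-order geodesic Taylor expansion of $E(\rho_{k+1})$ along $\Phi_k=-\mcH_{E,P}g_k$, the analogous expansion of $E(\rho^*)$ along $T_k$, the Newton--Kantorovich identity $g_k=-\mcH_E(\rho_k)T_k+\mcO(\|T_k\|_{\rho_k}^2)$ (which the paper obtains from Lemma~\ref{lem:tay2}, precisely the gradient expansion you single out as the main obstacle), and then the same algebraic regrouping with \eqref{ass:A4}--\eqref{ass:A6}. The only cosmetic difference is that the paper carries a general $\alpha_k$ through the computation before setting $\alpha_k=1$, whereas you specialize immediately; your decomposition into $-\tfrac12(A-C)+\tfrac12(B-A)$ is in fact a slightly cleaner presentation of the same estimate.
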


From Theorem \ref{thm:conv_e}, we note that if the linear subspace $\mcS$ is appropriately chosen such that $\mcH_{E,P}$ is close to $\mcH_{E}(\rho_k)^{-1}$ in the sense of \eqref{ass:A4} and \eqref{ass:A5}, then $\epsilon_1,\epsilon_2$ will be close to $0$. This yields a sharp asymptotic convergence rate in terms of optimality gap, i.e., $E(\rho_{k}) - E(\rho^*)$. 

\begin{remark}
We note that $\epsilon_2=\mcO(\lambda)$. This comes from the following identity.
\begin{equation}\label{equ:hep}
\begin{aligned}
&\mcH_{E,P} \mcH_E(\rho_k)\mcH_{E,P}-\mcH_{E,P}\\
=&P(P^*\mcH_E(\rho_k) P+\lambda \mcR_\mcH)^{-1} P^* \mcH_E(\rho_k)P(P^*\mcH_E(\rho_k) P+\lambda \mcR_\mcH)^{-1} P^*\\
&-P(P^*\mcH_E(\rho_k) P+\lambda \mcR_\mcH)^{-1} P^*\\
=&\lambda P(P^*\mcH_E(\rho_k) P+\lambda \mcR_\mcH)^{-1} \mcR_\mcS(P^*\mcH_E(\rho_k) P+\lambda \mcR_\mcH)^{-1} P^*.
\end{aligned}
\end{equation}
\end{remark}

\subsection{Convergence analysis in terms of samples}
In practice, we replace $\rho_k$ in the variational problem \eqref{var_s} by $\hat\rho_k(x)=\frac{1}{N}\sum_{n=1}^N{\delta(x-x_k^n)}$ to solve $\hat \Phi_k$. Here $x_k^n\sim \rho_k$. A natural question arises: with increasing sample numbers $N$, does $\hat \Phi_k$ from samples converge to $\Phi_k$ from distribution? Under further assumptions, the answer is yes and we postpone the justification in the appendix. 

To establish the convergence rate, we further assume that the following statements hold.
\begin{assumption}\label{asmp:3}
There exists $\epsilon_3\geq 0$, for all $\rho_k$ satisfying $\mcD(\rho_k,\rho^*)<\zeta$, such that
\begin{equation}\label{ass:A7}
\begin{aligned}
&\left| \int (\hat \Phi_k-\Phi_k)g_k dx -\frac{1}{2}\int (\hat \Phi_k-\Phi_k)(\mcH_{E,P}\mcH_E(\rho_k)+\mcH_E(\rho_k)\mcH_{E,P})g_k dx \right|\\
\leq& \frac{\epsilon_3}{2}\int g_k\mcH_{E}(\rho_k)^{-1} g_k dx.
\end{aligned}\tag{A7}
\end{equation}

There exists $\epsilon_4\geq 0$, for all $\rho_k$ satisfying $\mcD(\rho_k,\rho^*)<\zeta$, such that
\begin{equation}\label{ass:A8}
\begin{aligned}
&\left| \int (\hat \Phi_k-\Phi_k) \mcH_E(\rho_k)(\hat \Phi_k-\Phi_k) dx \right|
\leq& \epsilon_4\int g_k\mcH_{E}(\rho_k)^{-1} g_k dx.
\end{aligned}\tag{A8}
\end{equation}
\end{assumption}
The update rule in terms of density follows
$$
\rho_{k+1} = \Exp_{\rho_k}(\alpha_k \hat \Phi_k).
$$
\begin{theorem}\label{thm:conv_s}
Under Assumption \ref{asmp:1}, \ref{asmp:2} and \ref{asmp:3}, for $\rho_k$ satisfying $\mcD(\rho_k,\rho^*)<\zeta$, with $\alpha_k=1$, we have the linear convergence rate
\begin{equation*}
E(\rho_{k+1}) - E(\rho^*)\leq (\epsilon_1+\epsilon_2+\epsilon_1\epsilon_2+\epsilon_3+\epsilon_4)(E(\rho_{k}) - E(\rho^*))+\mcO((E(\rho_{k}) - E(\rho^*))^{3/2}).
\end{equation*}
\end{theorem}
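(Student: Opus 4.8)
The plan is to mirror the proof of Theorem~\ref{thm:conv_e} and treat the sampling error $\hat\Phi_k-\Phi_k$ as a perturbation of the subspace-restricted Newton step. First I would expand $E$ along the geodesic $s\mapsto\Exp_{\rho_k}(s\hat\Phi_k)$ to third order, with Lagrange remainder:
\begin{equation*}
E(\rho_{k+1})=E(\rho_k)+\int \hat\Phi_k\,g_k\,dx+\frac12\int \hat\Phi_k\,\mcH_E(\rho_k)\hat\Phi_k\,dx+\frac16\,\nabla^3E(\tilde\rho)(\hat\Phi_k,\hat\Phi_k,\hat\Phi_k),
\end{equation*}
where $g_k=\mcG(\rho_k)^{-1}\frac{\delta E}{\delta\rho_k}$, the first- and second-order terms are identified through the definition of $\nabla E,\nabla^2E$ together with Proposition~\ref{prop:php} and \eqref{equ:hehess}, and $\tilde\rho$ lies on the geodesic, which stays in a neighbourhood where Assumption~\ref{asmp:1} is valid once $\zeta$ is taken small enough. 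I would also record the near-optimality expansion $E(\rho_k)-E(\rho^*)=\frac12\int g_k\mcH_E(\rho_k)^{-1}g_k\,dx+\mcO((E(\rho_k)-E(\rho^*))^{3/2})$, obtained by expanding $E$ around $\rho^*$ (where the gradient vanishes) and using \eqref{ass:A1}--\eqref{ass:A2} to make $\mcD(\rho_k,\rho^*)^2$ comparable with the optimality gap.

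Next I would substitute $\hat\Phi_k=\Phi_k+(\hat\Phi_k-\Phi_k)$ with $\Phi_k=-\mcH_{E,P}g_k$ from \eqref{equ:hat_phik} and expand the linear and quadratic terms. The contributions that do not involve $\hat\Phi_k-\Phi_k$, namely $\int\Phi_k g_k\,dx+\frac12\int\Phi_k\mcH_E(\rho_k)\Phi_k\,dx$, are exactly those estimated in the proof of Theorem~\ref{thm:conv_e}: after using self-adjointness of $\mcH_{E,P}$ and $\mcH_E(\rho_k)$ and the near-optimality expansion, \eqref{ass:A4}--\eqref{ass:A5} bound them by $(\epsilon_1+\epsilon_2+\epsilon_1\epsilon_2)(E(\rho_k)-E(\rho^*))$ up to $\mcO((E(\rho_k)-E(\rho^*))^{3/2})$.

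The remaining terms are the part linear in $\hat\Phi_k-\Phi_k$, namely $\int(\hat\Phi_k-\Phi_k)g_k\,dx+\int(\hat\Phi_k-\Phi_k)\mcH_E(\rho_k)\Phi_k\,dx$, and the part quadratic in it, $\frac12\int(\hat\Phi_k-\Phi_k)\mcH_E(\rho_k)(\hat\Phi_k-\Phi_k)\,dx$. Rewriting $\mcH_E(\rho_k)\Phi_k=-\mcH_E(\rho_k)\mcH_{E,P}g_k$ and symmetrizing, the linear term matches the quantity controlled by \eqref{ass:A7} and is bounded by $\frac{\epsilon_3}{2}\int g_k\mcH_E(\rho_k)^{-1}g_k\,dx$, while \eqref{ass:A8} bounds the quadratic term by $\frac{\epsilon_4}{2}\int g_k\mcH_E(\rho_k)^{-1}g_k\,dx$; the near-optimality expansion converts both into $(\epsilon_3+\epsilon_4)(E(\rho_k)-E(\rho^*))$ up to a $3/2$-order term. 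Finally I would bound the cubic remainder by establishing $\|\hat\Phi_k\|_{\rho_k}=\mcO(\sqrt{E(\rho_k)-E(\rho^*)})$ --- using \eqref{ass:A6} to control $\|\Phi_k\|_{\rho_k}=\|\mcH_{E,P}\mcG(\rho_k)^{-1}\frac{\delta E}{\delta\rho_k}\|_{\rho_k}$ via Assumption~\ref{asmp:1}, and \eqref{ass:A1} with \eqref{ass:A8} to control $\|\hat\Phi_k-\Phi_k\|_{\rho_k}$ --- and then invoking \eqref{ass:A3}, which gives $|\nabla^3E(\tilde\rho)(\hat\Phi_k,\hat\Phi_k,\hat\Phi_k)|=\mcO(\|\hat\Phi_k\|_{\rho_k}^3)=\mcO((E(\rho_k)-E(\rho^*))^{3/2})$. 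Summing the pieces yields the claimed inequality.

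The step I expect to be the main obstacle is the algebra in the third paragraph: one must verify that the Taylor cross-term $\int(\hat\Phi_k-\Phi_k)g_k\,dx+\int(\hat\Phi_k-\Phi_k)\mcH_E(\rho_k)\Phi_k\,dx$ coincides, up to the slack permitted by \eqref{ass:A7}, with the symmetrized form $\int(\hat\Phi_k-\Phi_k)g_k\,dx-\frac12\int(\hat\Phi_k-\Phi_k)(\mcH_{E,P}\mcH_E(\rho_k)+\mcH_E(\rho_k)\mcH_{E,P})g_k\,dx$; this relies on $\Phi_k=-\mcH_{E,P}g_k$, the self-adjointness of $\mcH_{E,P}$, and the fact that $\hat\Phi_k-\Phi_k$ lies in $\mcS$. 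A secondary but pervasive technical point is that every Taylor expansion --- around $\rho_k$ and around $\rho^*$ --- must carry a remainder that is uniformly $\mcO((E(\rho_k)-E(\rho^*))^{3/2})$ over geodesics staying inside the $\zeta$-neighbourhood, which is precisely where \eqref{ass:A3} and the smallness of $\|\hat\Phi_k\|_{\rho_k}$ near the optimum are essential.
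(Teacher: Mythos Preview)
Your proposal is correct and follows essentially the same route as the paper: a third-order Taylor expansion of $E$ along the geodesic from $\rho_k$, the decomposition $\hat\Phi_k=\Phi_k+(\hat\Phi_k-\Phi_k)$ with $\Phi_k=-\mcH_{E,P}g_k$, reuse of the Theorem~\ref{thm:conv_e} estimates for the $\Phi_k$-only part, \eqref{ass:A7} and \eqref{ass:A8} for the cross and quadratic perturbation terms, and \eqref{ass:A6} together with \eqref{ass:A1}/\eqref{ass:A8} to control $\|\hat\Phi_k\|_{\rho_k}$ in the cubic remainder. The algebraic point you single out as the main obstacle---reconciling the raw cross-term $\int(\hat\Phi_k-\Phi_k)\mcH_E(\rho_k)\Phi_k\,dx$ with the symmetrized expression in \eqref{ass:A7}---is exactly where the paper's own argument is most compressed, and your diagnosis of what makes it work (self-adjointness of $\mcH_{E,P}$ and $\hat\Phi_k-\Phi_k\in\mcS$) is on target.
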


\section{Numerical experiments}\label{sec:num}
In this section, we present numerical experiments to demonstrate the strength of information Newton's methods.

\subsection{Toy examples}\label{ssec:num_toy}
We compare particle implementations among Wasserstein Newton's methods with affine models \ref{alg:aff_newton}/RKHS \ref{alg:ker_newton}  (WNewton-a/WNewton-k), Wasserstein gradient flow (WGF), Hessian Approximated Lagrangian Langevin dynamics (HALLD) and Stein variational Newton's method with the scaled Hessian kernel (SVN-H) \citep{asvnm}. We note that the update rule of WGF satisfies
$$
x_{k+1}^n=x_k^n-\alpha_k(\nabla f(x_k^n)+\xi_k(x_k^n)).
$$
The update rule of HALLD follows
$$
x_{k+1}^n = x_k^n-\alpha_k\nabla^2f(x_k^n)^{-1}(\nabla f(x_k^n)+\xi_k(x_k^n)).
$$
We note that the density evolution of HALLD and HAMCMC are identical to each other. 
In other words, we replace the Brownian motion in HAMCMC by $\xi_k$ in HALLD. Here $\xi_k$ is an approximation of $\nabla\log \rho_k$. For all compared methods, we use constant step sizes. For the calculation of $\xi_k$, we apply KDE with Gaussian kernels and the kernel bandwidth is selected by the Brownian Motion method \citep{aigf}[section 5.1]. This method adaptively learns the bandwidth from samples generated by Brownian motions.

We first consider a $1$D target density $\rho^*(x)\propto\exp\pp{-f(x)}$, where $f(x)=\frac{1}{2}(x^2-1)^2$. For WGF, we set  $\alpha_k=0.01$. For SVGD, we set $\alpha_1=0.1$ and adjust the step size by Adagrad \citep{Adagrad}. For WNewton-a and WNewton-k, we let $\alpha_k=1$, $\epsilon=0$ and $\gamma=0$. Namely, we do not apply the hybrid update. For HALLD and SVN-H, we set $\alpha_k=1$. 

The sample number follows $N=100$. The initial distribution follows $\mcN(0,0.01)$. We plot the distribution after $2,5,10,20$ iterations in Figure \ref{fig:toy1}. Although we use affine/kernel approximations to compute the Newton's direction, WNewton-a and WNewton-k tend to converge to the target density and they are faster than WGF. SVGD has similar performance with WGF. HALLD and SVN-H have some particle which tend to diverge. This may result from that the target density is not log-concave. 

\begin{figure}[!htbp]
\centering
\begin{minipage}[t]{\textwidth}
\centering
\includegraphics[width=\linewidth]{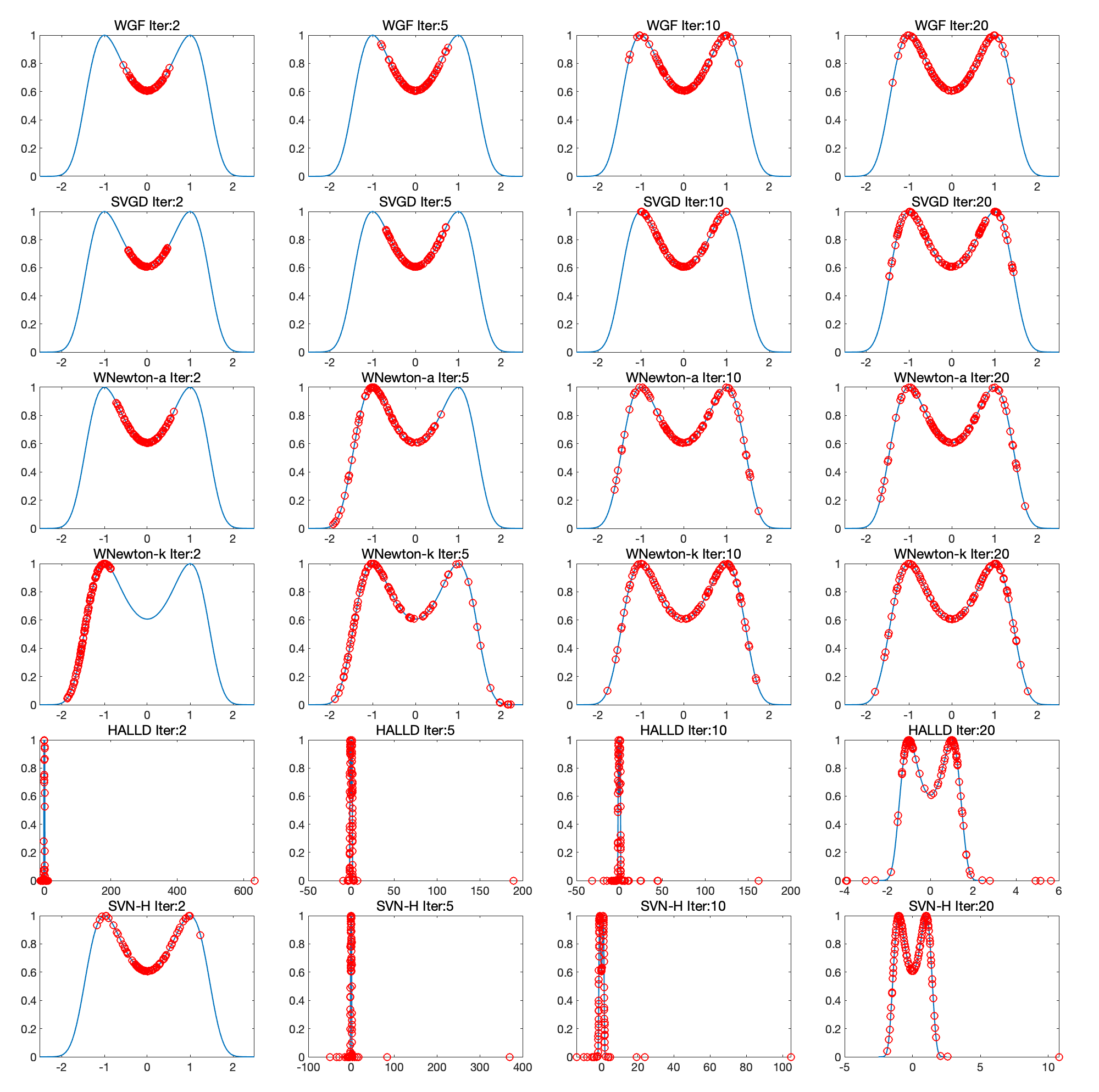}
\end{minipage}
\caption{Comparison among WGF, SVGD, WNewton-a, WNewton-k, HALLD and SVN-H in 1D toy example. Left to right: sample distribution after $2,5,10,20$ iterations.}\label{fig:toy1}
\end{figure}

Then, we let the target density $\rho^*$ to be a $2$D bimodal distribution \citep{viwnf}. For WGF, we set $\alpha_k=0.1$. For SVGD, we set $\alpha_1=1$ and adjust the step size via Adagrad.  For WNewton-a, we apply the hybrid update and set $\alpha_k=0.2,\epsilon=0$ and $\gamma=0.5$. For WNewton-k, we set $\alpha_k=1, \epsilon=0, \gamma=0$. For HALLD, we set $\alpha_k=0.2$. For SVN-H, we set $\alpha=1$. 

The initial distribution follows $\mcN([0,10]',I)$. We plot the distribution after $2,5,10,20$ iterations with $N=100$ samples in Figure \ref{fig:toy}. WNewton-k converges rapidly toward the target density. HALLD fails to converge because $\nabla^2 f$ becomes singular on certain sample points. SVN-H barely moves because the initial distribution is not close enough to the target distribution. SVGD converges slower than WGF. The Wasserstein Newton's direction helps samples to converge faster towards the target density with robustness.

\begin{figure}[!htbp]
\centering
\begin{minipage}[t]{\textwidth}
\centering
\includegraphics[width=\linewidth]{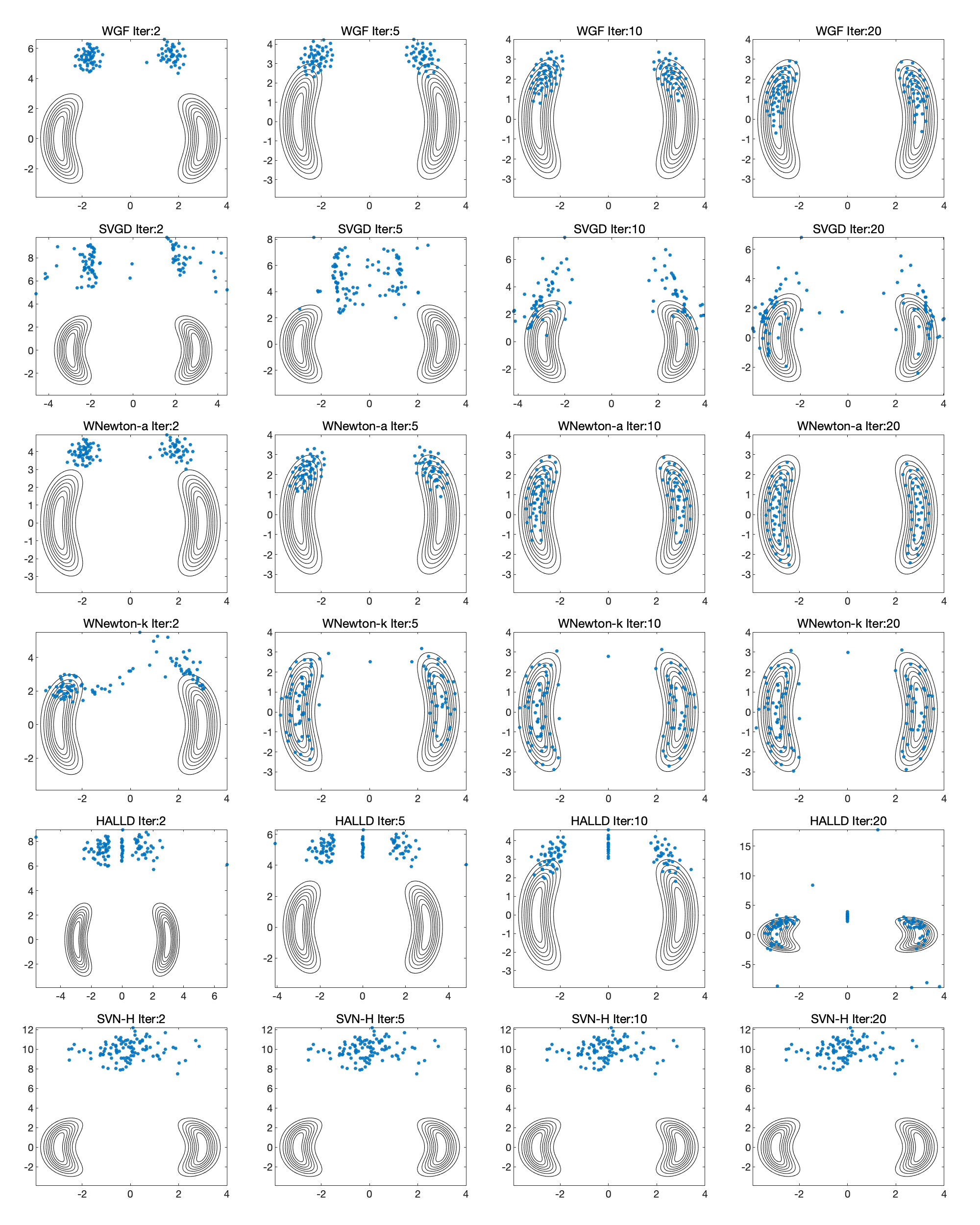}
\end{minipage}
\caption{Comparison among WGF, SVGD, WNewton-a, WNewton-k, HALLD and SVN-H in 2D toy example. Left to right: sample distribution after $2,5,10,20$ iterations.}\label{fig:toy}
\end{figure}

Next, we present numerical results on a 2D double-banana shape posterior density in \citep{asvnm}. For WGF, we set $\alpha_k=0.002$. For SVGD, we set $\alpha_1=0.1$ and adjust the step size via Adagrad. For WNewton-a, we apply the hybrid update and set $\alpha_k=0.2,\epsilon=0$ and $\gamma=0.001$. For WNewton-k, we set $\alpha_k=1, \epsilon=0, \gamma=0$. For HALLD and SVN-H, we set $\alpha_k=1$. 

Similarly, we plot the distribution after $2,5,10,20$ iterations with $N=100$ samples in Figure \ref{fig:double_banana}. WNewton-k and SVN-H converges toward the posterior distribution in no more than 5 iterations. WNewton-a collapses around the center of the lower banana. WGF and SVGD take nearly 20 iterations to converge. HALLD converges rapidly but it diverge at iteration 20. Here we notice that WNewton does not require heavy tunes of step sizes. The step size $\alpha_k=1$ usually leads to robust performance.

\begin{figure}[!htbp]
\centering
\begin{minipage}[t]{0.95\textwidth}
\centering
\includegraphics[width=\linewidth]{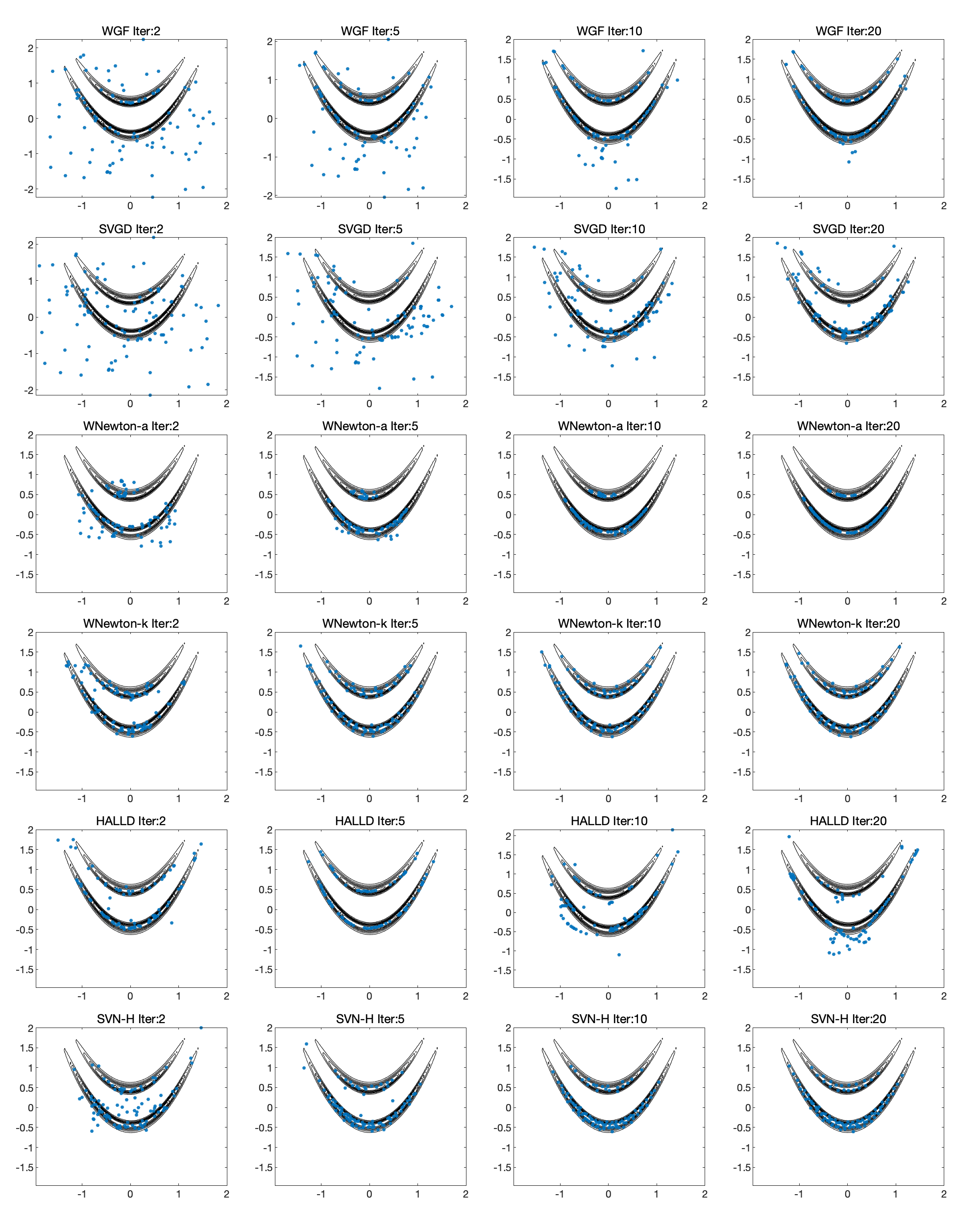}
\end{minipage}
\caption{Comparison among WGF, SVGD, WNewton-a, WNewton-k, HALLD and SVN-H in 2D double banana example. Left to right: sample distribution after $2,5,10,20$ iterations.}\label{fig:double_banana}
\end{figure}

\subsection{Conditioned diffusion}
The conditioned diffusion example is a $100$-dimensional model from a Langevin SDE, with state $u_t:[0,T]\to \mbR$ and dynamics give by
$$
du_t = \frac{\beta u(1-u^2)}{1+u^2} dt+dx_t,\quad u_0=0.
$$
Here $x=(x_t)_{t\geq 0}$ is the standard Brownian motion. The goal is to infer the driving process $x_t$ and its pushfoward to the state $u$. Detailed setup of this test case can be found in \citep{asvnm}. 

We compare WNewton-a with WGF, SVGD, SVN-H and HALLD. We do not compare WNewton-k because per-iteration computation cost in the current implementation is too heavy on this test case with $N=1000$ and $d=100$. For WGF, we set $\alpha_k=0.01$. For SVGD, we set $\alpha_1=0.1$ and adjust step sizes via Adagrad. For WNewton-a, SVN-H and HALLD, we set $\alpha_k=1$. From Figure \ref{fig:cond_diff}, we note that the posterior mean (which captures the trends of true path) from WNewton-a, SVN-H and HALLD almost converge in approximately 10 iterations. Meanwhile, the posterior mean from WGF and SVGD takes 50-100 iterations to converge. Compared to SVN-H, WNewton-a tends to have narrower credible interval. The credible interval of HALLD in $[0,0.5]$ after 100 iterations has larger fluctuation. 

\begin{figure}[!htbp]
\centering
\begin{minipage}[t]{0.85\textwidth}
\centering
\includegraphics[width=\linewidth]{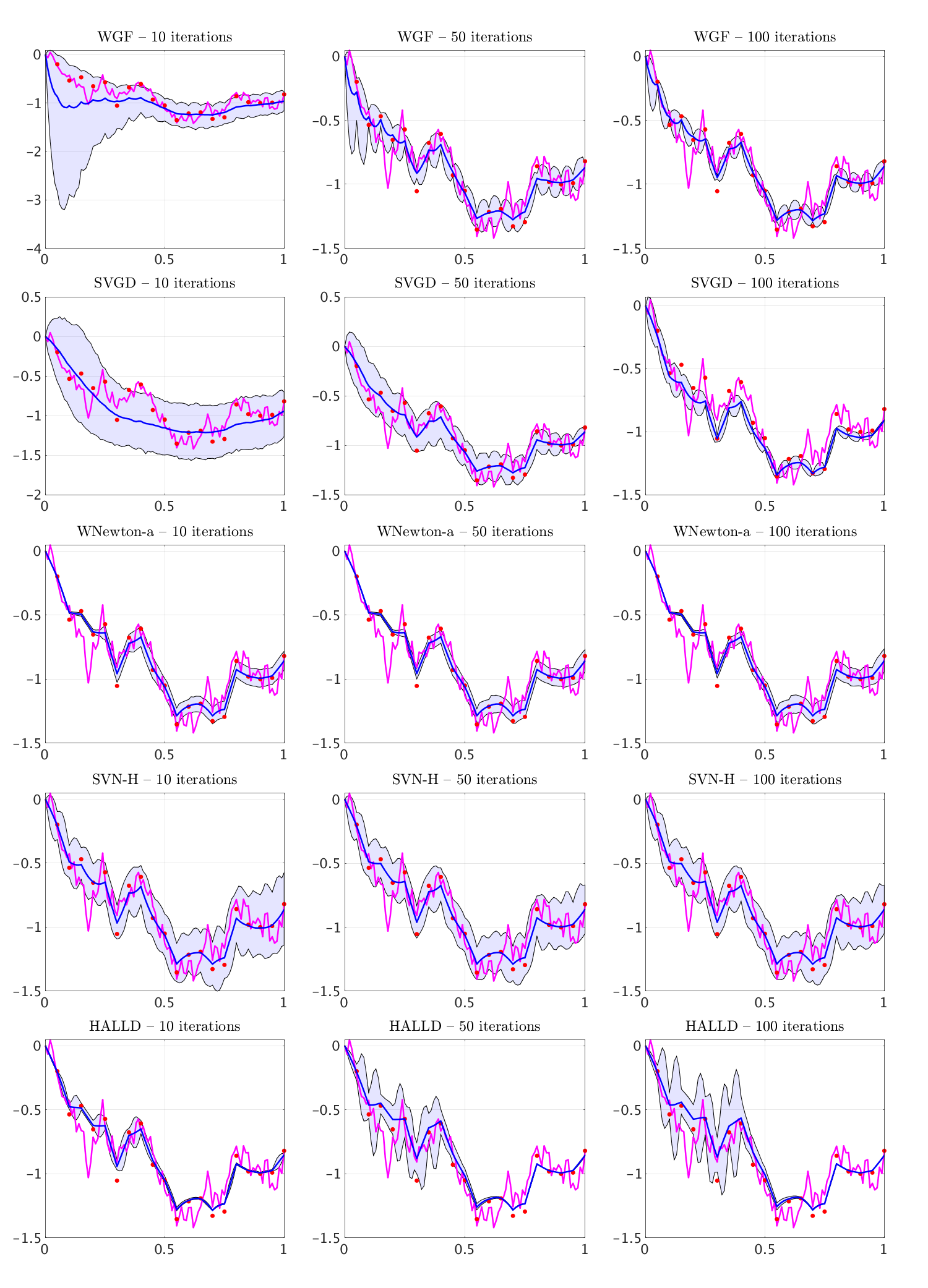}
\end{minipage}
\caption{Comparison among WGF, SVGD, WNewton-a, HALLD and SVN-H in 100D conditioned diffusion example. Left to right: sample distribution after $10,50,100$ iterations. Red dots: noisy observations. Purple line: ground truth. Blue line: posterior mean. Shaded area: $90\%$ credible interval.}\label{fig:cond_diff}
\end{figure}

\subsection{Bayesian logistic regression}
We perform the standard Bayesian logistic regression experiment on the Covertype dataset, following the settings in \citep{SVGD}. We compare WNewton-a and WNewton-k with MCMC, SVGD \citep{SVGD}, and WGF. The performances of SVN-H and HALLD on this test example are not ideal. For the calculation of $\xi_k$ in WGF and WNewton-a, we use KDE with Gaussian kernel and the bandwidth is selected by the median method, which is the same as \citep{SVGD}. The sample number follows $N=50$. The mini-batch size for stochastic gradient and Hessian evaluations in each iteration is $100$.

We first discuss the choice of step sizes. The initial step sizes for the compared methods are given in Table \ref{tab:step}. Except for SVGD, the initial step sizes are selected from $\{i\cdot 10^j|i\in\{1,2,5\},j\in\{-3,\dots,-7\}\}$ to ensure the best performance. For SVGD, we use the initial step size in \citep{SVGD} and adjust step sizes by Adagrad. For MCMC, WGF and WNewton-k, the step size is multiplied by $0.9$ every $100$ iterations. For WNewton-a, the step size is multiplied by $0.82$ every $100$ iterations. 

\begin{table}[!htbp]
    \centering
    \begin{tabular}{|c|c|c|c|c|c|c|}
    \hline
         Method &MCMC & SVGD&WGF&WNewton-a&WNewton-k  \\\hline
         Step size $\alpha_1$& 1e-5& 0.05&1e-5&2e-3&2e-3\\ \hline
    \end{tabular}
    \caption{Initial step sizes for algorithms in comparison.}
    \label{tab:step}
\end{table}
We then elaborate on the implementation details of compared methods. For WNewton-k, we apply the block-diagonal approximation to accelerate the computation. For WNewton-a and WNewton-k, we set $\epsilon=1$ and use the hybrid update with $\gamma=5\times 10^{-3}$ and $\gamma= 10^{-3}$ respectively.

From Figure \ref{fig:blr}, we observe that WNewton-k has the best performance in terms of test accuracy and test log-likelihood and it converges much faster compared to other methods. Namely, WNewton-k has ideal performance on test test tests in less than 200 iterations. WNewton-a and WNewton-k achieves higher test log-likelihood. This indicates that the approximated Wasserstein Newton's direction leads to better generalization on the test set. 

\begin{figure}[!htbp]
\centering
\begin{minipage}[t]{0.48\textwidth}
\centering
\includegraphics[width=\linewidth]{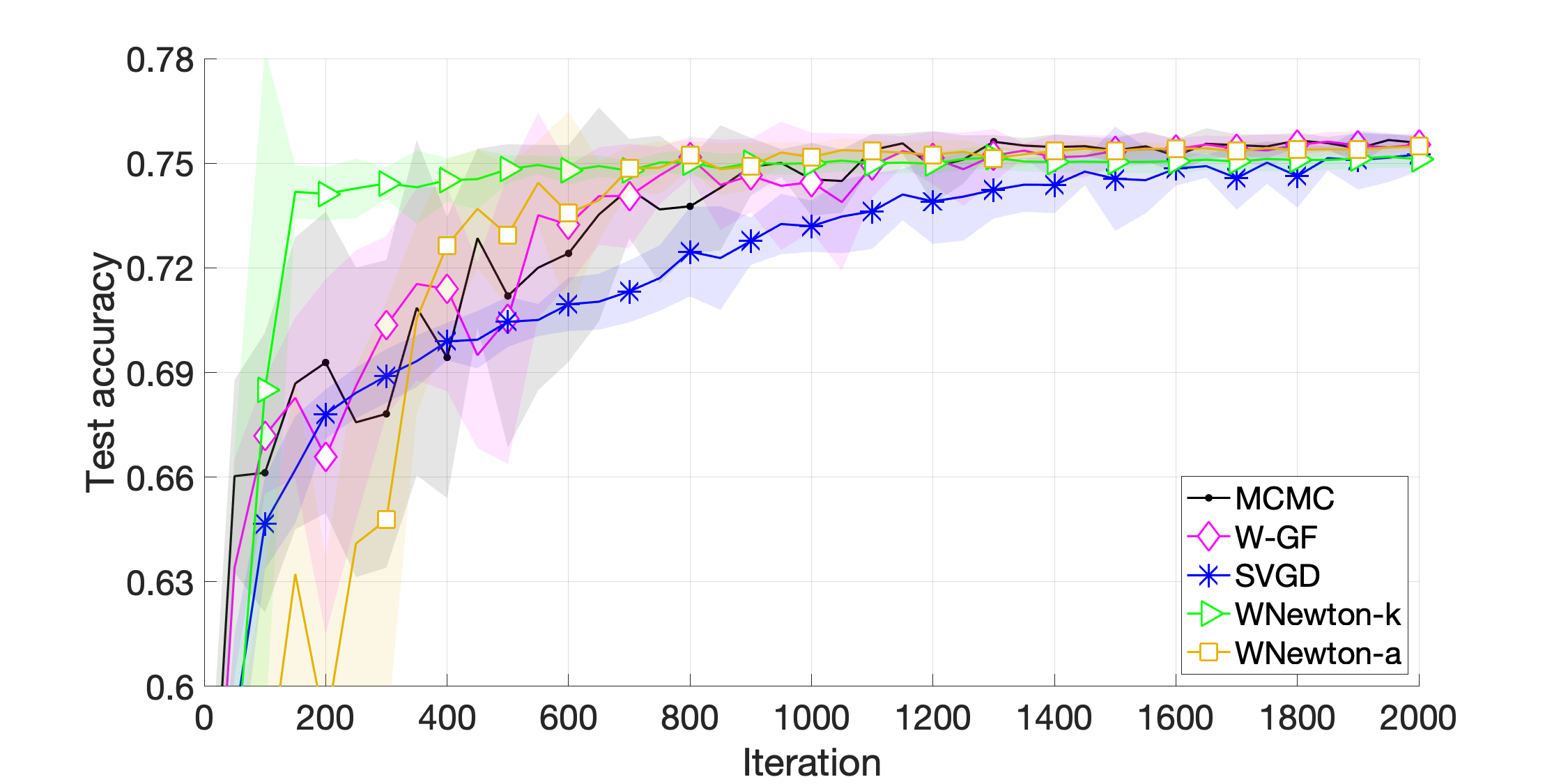}
\end{minipage}
\begin{minipage}[t]{0.48\textwidth}
\centering
\includegraphics[width=\linewidth]{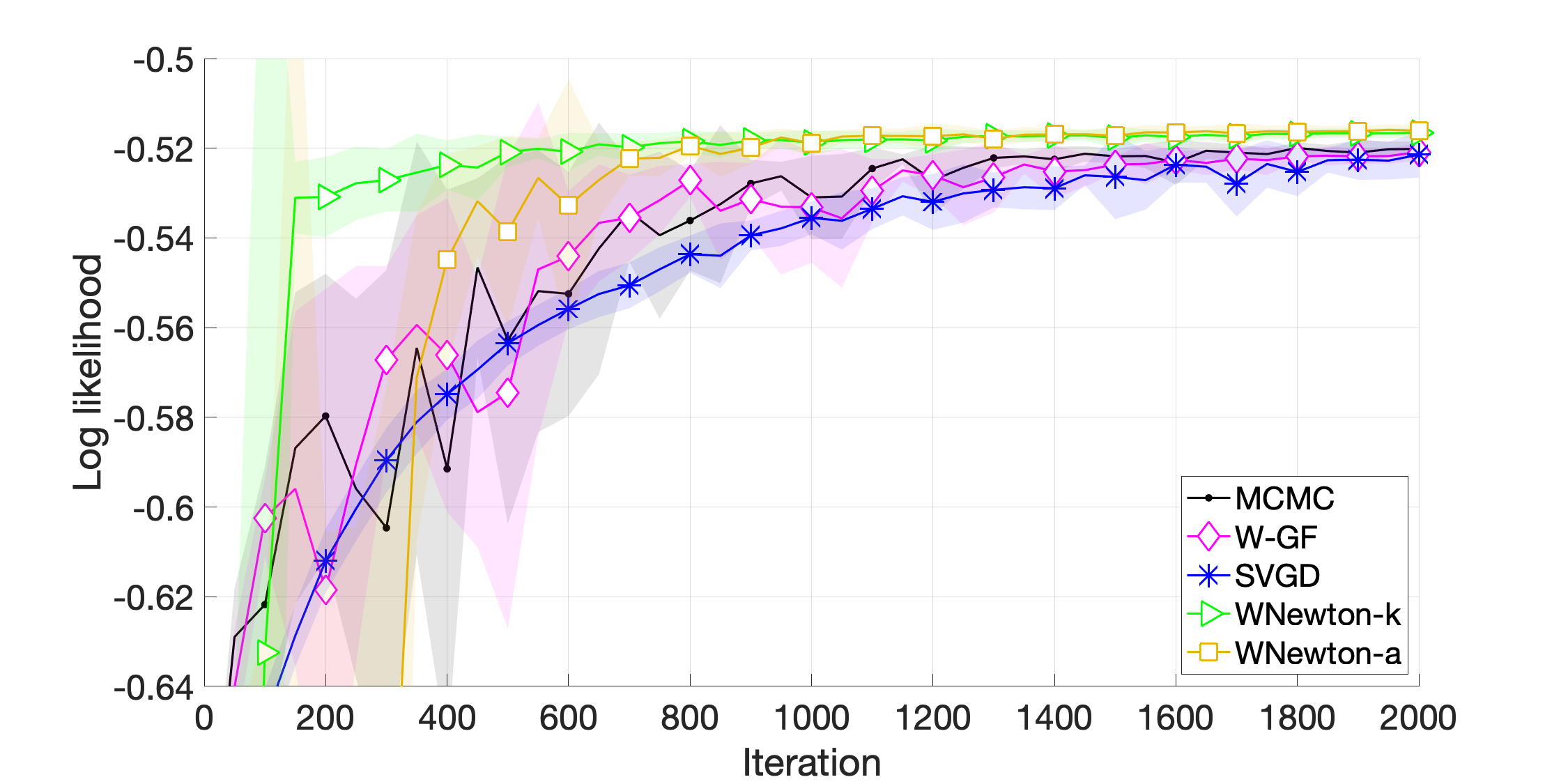}
\end{minipage}
\caption{Comparison of different methods on Bayesian logistic regression, averaged over $10$ independent trials. The shaded areas show the variance over $10$ trials. Left: Test accuracy; Right: Test log-likelihood.}\label{fig:blr}
\end{figure}

\section{Conclusion}
In this paper, we introduce information Newton's flows (second-order optimization methods) for optimization problems in probability space arising from Bayesian statistics, inverse problems, and machine learning. Here two information metrics, such as Fisher-Rao metric and Wasserstein-2 metric, are considered. Several examples and convergence analysis of the proposed second-order methods are provided. Following the fact that the Wasserstein gradient flow of KL divergence formulates the Langevin dynamics, we derive the Wasserstein Newton's flow of KL divergence as Newton's Langevin dynamics. Focusing on Newton's Langevin dynamics, we study analytical examples in one-dimensional sample space and Gaussian families. We further propose practical sampling efficient algorithms, in affine models and RKHS, to implement Newton's Langevin dynamics. We show the convergence rate of information Newton's method with approximated solutions. The numerical examples in Bayesian sampling problems demonstrate the effectiveness of the proposed method.  


\newpage

\appendix

\vskip 0.2in

\section{Definitions and notations}
In this section, we present several definitions and notations used in this paper. We briefly review the concept of self-adjoint operator.
\begin{definition}[Self-adjoint]
Suppose that $V$ is a Hilbert space and let $\mcH:V\to V^*$ be a linear operator. $V^*$ is the adjoint space of $V$, which consists of all linear functionals on $V$. Let $(f,v)=(v,f)=f(v)$ denote the coupling of $v\in V$ and $f\in V^*$. The adjoint operator of $\mcH$ is the unique linear operator $\mcH^*:V\to V^*$, which satisfies
$$
(\mcH v_1,v_2) = (v_1,\mcH^*v_2),\quad \forall v_1,v_2\in V.
$$
We say that $\mcH$ is self-adjoint if $\mcH=\mcH^*$.
\end{definition}
\begin{remark}
If $V=\mbR^d$ is the Euclidean space, then the linear operator $\mcH$ can be viewed as a matrix in $\mbR^{d\times d}$. Then, to say that $\mcH$ is self-adjoint operator is equivalent to say that $\mcH$ is a symmetric matrix.
\end{remark}
We define positive definite operators as follows.
\begin{definition}
Suppose that $V$ is a Hilbert space and let $\mcH:V\to V^*$ be a self-adjoint linear operator. We say that $\mcH$ is positive definite, if $(\mcH v, v)>0$ for all $v\in V$, $v\neq 0$. 
\end{definition}

\section{Proofs in section \ref{section3}}
In this section, we present details and proofs for propositions in section \ref{section3}.
Proposition \ref{prop:unique} provides a sufficient condition to ensure that the Hessian operator is injective (invertible). 
\begin{proposition}\label{prop:unique}
Suppose that $g_\rho(\Hess E(\rho) \sigma, \sigma)>0$ for all $\sigma\neq 0,\sigma\in T_\rho\mcP(\Omega)$. Namely, $\mcH_E(\rho)$ is positive definite. Then, $\Hess E(\rho)$ is injective.
\end{proposition}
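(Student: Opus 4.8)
The plan is to reduce the statement to the elementary linear-algebra fact that a linear operator whose associated diagonal quadratic form is strictly positive has trivial kernel.

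First I would unwind the two formulations of positive definiteness in the statement and check they coincide. By Proposition \ref{prop:php} we have the identity $\int \Phi \mcH_E(\rho)\Phi\,dx = g_\rho(\sigma,\Hess E(\rho)\sigma)$ with $\Phi = \mcG(\rho)\sigma$. Since the metric tensor $\mcG(\rho)\colon T_\rho\mcP(\Omega)\to T^*_\rho\mcP(\Omega)$ is invertible by definition, the map $\sigma\mapsto \Phi = \mcG(\rho)\sigma$ is a linear bijection between $T_\rho\mcP(\Omega)$ and $T^*_\rho\mcP(\Omega)$; in particular $\sigma\neq 0$ if and only if $\Phi\neq 0$. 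Hence ``$\mcH_E(\rho)$ is positive definite'', i.e. $\int \Phi \mcH_E(\rho)\Phi\,dx>0$ for all $\Phi\neq 0$, holds exactly when $g_\rho(\Hess E(\rho)\sigma,\sigma)>0$ for all $\sigma\neq 0$, which is the hypothesis we use below. This step is just bookkeeping between the tangent-space picture $(\Hess E(\rho),g_\rho)$ and the cotangent-space picture $(\mcH_E(\rho),\int\Phi\,\cdot\,\Phi\,dx)$.

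Next I would run the injectivity argument. Recall $\Hess E(\rho)$ is a linear map from $T_\rho\mcP(\Omega)$ to itself, so injectivity is equivalent to $\ker \Hess E(\rho) = \{0\}$. Suppose $\sigma\in T_\rho\mcP(\Omega)$ satisfies $\Hess E(\rho)\sigma = 0$. Then, using bilinearity of $g_\rho$, $g_\rho(\Hess E(\rho)\sigma,\sigma) = g_\rho(0,\sigma) = 0$. If $\sigma\neq 0$ this contradicts the assumption $g_\rho(\Hess E(\rho)\sigma,\sigma)>0$; therefore $\sigma = 0$. Thus the kernel is trivial and $\Hess E(\rho)$ is injective, as claimed.

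I do not anticipate any real obstacle: the argument uses only positivity of the diagonal quadratic form $\sigma\mapsto g_\rho(\Hess E(\rho)\sigma,\sigma)$, not the self-adjointness of $\Hess E(\rho)$ recorded in its definition. The only points meriting a word of care are the identification of the two positivity conditions through the metric tensor (handled in the first step) and the fact that, in the infinite-dimensional setting of $\mcP(\Omega)$, injectivity does not automatically upgrade to surjectivity, which is precisely why the proposition asserts only injectivity.
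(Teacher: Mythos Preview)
Your proof is correct and follows essentially the same route as the paper. The paper takes $\sigma_1,\sigma_2$ with $\Hess E(\rho)\sigma_1=\Hess E(\rho)\sigma_2$ and observes $g_\rho(\sigma_1-\sigma_2,\Hess E(\rho)(\sigma_1-\sigma_2))=0$, forcing $\sigma_1=\sigma_2$ by the positivity hypothesis; your kernel formulation is the same argument, and your extra paragraph matching the tangent- and cotangent-space positivity conditions is a helpful clarification the paper leaves implicit.
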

\begin{proof}
If there exist $\sigma_1,\sigma_2\in T_\rho\mcP(\Omega)$ such that $\Hess E(\rho)\sigma_1=\Hess E(\rho)\sigma_2$. Then,
$$
g_\rho((\sigma_1-\sigma_2),\Hess E(\rho)(\sigma_1-\sigma_2))=\int (\sigma_1-\sigma_2) G(\rho)^{-1} \Hess E(\rho)(\sigma_1-\sigma_2) dx=0.
$$
By our assumption  $g_\rho(\Hess E(\rho) \sigma, \sigma)>0$ for all $\sigma\neq 0$, we have $\sigma_1=\sigma_2$.
\end{proof}

\subsection{Proof of Proposition \ref{prop:php}}
The geodesic curve $\hat \rho_s$ satisfies geodesic equation
\begin{equation}\label{equ:geo}
\left\{
\begin{aligned}
&\p_s\hat\rho_s-\mcG(\hat\rho_s)^{-1}\Phi_s=0,\\
&\p_s\Phi_s+\frac{1}{2}\frac{\delta}{\delta \hat\rho_s}\lp \int  \Phi_s\mcG(\hat\rho_s)^{-1}\Phi_sdx\rp=0,
\end{aligned}
\right.
\end{equation}
with initial values $\hat\rho_s|_{s=0} = \rho$ and $\Phi_s|_{s=0} = \Phi$. For the first-order derivative, it follows 
\begin{equation*}
    \frac{d}{ds} E(\hat\rho_s) = \int \p_s\hat\rho_s \frac{\delta E}{\delta \hat\rho_s} dx = \int \Phi_s\mcG(\hat\rho_s)^{-1}\frac{\delta E}{\delta \hat\rho_s} dx,
\end{equation*}
where we utilize the fact that $\mcG(\hat\rho_s)$ is self-adjoint. For the second-order derivative,
\begin{equation*}
\begin{aligned}
 \frac{d^2}{ds^2} E(\hat\rho_s) = &\int \p_s\Phi_s\mcG(\hat\rho_s)^{-1}\frac{\delta E}{\delta \hat\rho_s} dx+\int \p_s \hat\rho_s \frac{\delta}{\delta \hat\rho_s} \pp{\frac{d}{ds} E(\hat\rho_s)}dx\\
 =&-\frac{1}{2}\int \mcA(\hat\rho_s)(\Phi_s,\Phi_s)\mcG(\hat\rho_s)^{-1}\frac{\delta E}{\delta \hat\rho_s} dx+\int   \mcA(\hat\rho_s)\pp{\Phi_s,\frac{\delta E}{\delta \hat\rho_s}} \mcG(\hat\rho_s)^{-1}\Phi_s dx\\
 &+\int \int \pp{\mcG(\hat\rho_s)^{-1}\Phi_s}(y)\frac{\delta^2 E}{\delta \hat\rho_s^2}(x,y) \pp{\mcG(\hat\rho_s)^{-1}\Phi_s}(x) dxdy.
\end{aligned}
\end{equation*}
Based on the definition of $\mcH_E(\rho)$, \eqref{equ:php} is proved by setting $s=0$ in the above formula. To prove \eqref{equ:hehess}, we introduce Lemma \ref{lemma:unique}.
\begin{lemma}\label{lemma:unique}
Let $\mcH$ be a self-adjoint linear operator from $T^*_\rho \mcP(\Omega)\to T_\rho \mcP(\Omega)$. Namely $\mcH^* = \mcH$. Suppose that $\int \Phi\mcH\Phi dx=0$ for all $\Phi\in T_\rho^*\mcP(\Omega)$. Then, $\mcH = 0$.
\end{lemma}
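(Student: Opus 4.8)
The plan is to prove the lemma by a polarization argument: the hypothesis says the quadratic form $\Phi\mapsto \int \Phi\,\mcH\Phi\,dx$ vanishes identically, and I want to upgrade this to the statement that the associated bilinear form vanishes, after which non-degeneracy of the $L^2$ pairing between $T_\rho\mcP(\Omega)$ and $T_\rho^*\mcP(\Omega)$ forces $\mcH\Phi=0$ for every $\Phi$.

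First I would fix arbitrary $\Phi_1,\Phi_2\in T_\rho^*\mcP(\Omega)$ and apply the hypothesis to $\Phi:=\Phi_1+\Phi_2$. Expanding by linearity of $\mcH$,
$$
0=\int (\Phi_1+\Phi_2)\mcH(\Phi_1+\Phi_2)\,dx=\int \Phi_1\mcH\Phi_1\,dx+\int \Phi_2\mcH\Phi_2\,dx+\int \Phi_1\mcH\Phi_2\,dx+\int \Phi_2\mcH\Phi_1\,dx.
$$
The first two terms vanish by hypothesis. Self-adjointness of $\mcH$ (the coupling identity $(\mcH v_1,v_2)=(v_1,\mcH v_2)$ with $v_i=\Phi_i$) gives $\int \Phi_2\mcH\Phi_1\,dx=\int \Phi_1\mcH\Phi_2\,dx$, so the remaining two terms are equal. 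Hence $2\int \Phi_1\mcH\Phi_2\,dx=0$, i.e. $\int \Phi_1\mcH\Phi_2\,dx=0$ for all $\Phi_1,\Phi_2\in T_\rho^*\mcP(\Omega)$.

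Next, fix $\Phi_2$ and set $v:=\mcH\Phi_2\in T_\rho\mcP(\Omega)$, so $\int v\,dx=0$. Because $v$ has zero mean, the pairing $\int \Phi_1 v\,dx$ is well defined and independent of the representative chosen for $\Phi_1$ in $\mcF(\Omega)/\mbR=T_\rho^*\mcP(\Omega)$; in particular we may let $\Phi_1$ range over \emph{all} smooth functions on $\Omega$. Since this integral is always zero, the fundamental lemma of the calculus of variations (du Bois--Reymond) yields $v\equiv 0$. As $\Phi_2$ was arbitrary, $\mcH\Phi_2=0$ for every $\Phi_2$, i.e. $\mcH=0$. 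The only point requiring care is this last step: one must confirm that the $L^2$ coupling descends to a genuinely non-degenerate bilinear form on $T_\rho^*\mcP(\Omega)\times T_\rho\mcP(\Omega)$ despite cotangent vectors being defined only up to additive constants, which is exactly what the zero-mean property of tangent vectors guarantees. Everything else is routine.
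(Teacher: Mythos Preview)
Your proof is correct and is essentially the same argument as the paper's: the paper writes the one-line computation $\mcH\Phi=\tfrac12\,\frac{\delta}{\delta\Phi}\int\Phi\,\mcH\Phi\,dx=0$, which is precisely your polarization step (the directional derivative of the quadratic form at $\Phi$ in direction $\Psi$ is $2\int\Psi\,\mcH\Phi\,dx$ by self-adjointness) followed by the non-degeneracy of the pairing, both compressed into the notation of a functional gradient. You have simply unpacked that computation explicitly.
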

\begin{proof}
Because $\mcH$ is self-adjoint and linear, for any $\Phi\in T_\rho^*\mcP(\Omega)$, it follows
\begin{equation*}
\mcH\Phi = \frac{1}{2}\frac{\delta}{\delta \Phi}\int \Phi\mcH\Phi dx=0.
\end{equation*}
This completes the proof. 
\end{proof} 
Note that $\Hess E(\rho)$ is self-adjoint w.r.t. the metric tensor $G(\rho)$, namely
\begin{equation*}
    (\Hess E(\rho))^*\mcG(\rho) = \mcG(\rho) \Hess E(\rho),\quad \mcG(\rho)^{-1}(\Hess E(\rho))^*=\Hess E(\rho)\mcG(\rho)^{-1}.
\end{equation*}
where $(\Hess E(\rho))^*$ is the adjoint operator of $\Hess E(\rho)$. This tells that $\Hess E(\rho)\mcG(\rho)^{-1}$ is self-adjoint.  We have the following relationship.
 \begin{equation*}
 \int \Phi \mcH_E(\rho) \Phi dx = g_\rho(\Hess E(\rho) \sigma,\sigma) = \int \Phi \mcG(\rho)^{-1} \Hess E(\rho)\Phi dx.
 \end{equation*}
As a direct result of Proposition \ref{prop:unique}, it follows $\mcH_E(\rho)=\Hess E(\rho)\mcG(\rho)^{-1}$. 

\subsection{Newton's flows under Fisher-Rao metric}
\label{ssec:fnf}
For Fisher-Rao metric, the geodesic curve $\hat \rho_s$ satisfies
\begin{equation*}
\left\{
\begin{aligned}
&\p_s\hat\rho_s-\rho_s\lp \Phi_s-\int   \Phi_s\hat\rho_s dy\rp=0,\\
&\p_s\Phi_s+\frac{1}{2}\Phi_s^2-\lp\int \Phi_s \hat\rho_s dy\rp\Phi_s=0.
\end{aligned}
\right.
\end{equation*}
And the bi-linear operator $\mcA^F(\rho)$ follows
\begin{equation}
\mcA^F(\rho)(\Phi_1,\Phi_2) = \Phi_1\Phi_2-\pp{\int \Phi_2\rho dy}\Phi_1 -\pp{\int \Phi_1 \rho dy}\Phi_2 . 
\end{equation}
For simplicity, we let $\mbE_{\rho}[\Phi] = \int \Phi \rho dx$, where $\Phi\in T_\rho^*\mcP(\Omega)$. 

\begin{proposition}[Fisher-Rao Newton's flow]
\label{prop:fnf}
For an objective function $E:\mcP(\Omega)\to \mbR$, the Fisher-Rao Newton's flow follows
\begin{equation}
    \lbb{
&\p_t\rho_t-\rho_t(\Phi_t-\mbE_{\rho_t}[\Phi_t]) = 0,\\
&\mcH_E^F(\rho_t)\Phi_t - \rho_t\pp{\frac{\delta E}{\delta {\rho_t}}-\mbE_{\rho_t}\bb{\frac{\delta E}{\delta {\rho_t}}}}=0,
}
\end{equation}
where $\mcH_E^F(\rho):T_\rho^*\mcP(\Omega)\to T_\rho\mcP(\Omega)$ defines a bi-linear form: for $\Phi\in T_\rho^*\mbP(\Omega)$,
\begin{equation}
\begin{aligned}
\int \Phi \mcH_E^F(\rho)\Phi dx = &\frac{1}{2}\int   \mcA^F(\rho)\pp{\Phi,\frac{\delta E}{\delta \rho}}(\Phi-\mbE_\rho[\Phi])\rho dx\\
 &+\int \int \rho(y)(\Phi(y)-\mbE_\rho[\Phi])\frac{\delta^2 E}{\delta \rho^2}(x,y)dy \rho(x)(\Phi(x)-\mbE_\rho[\Phi]) dx.\\
\end{aligned}
\end{equation}
\end{proposition}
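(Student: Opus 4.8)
This statement is a specialization of two results already in hand: the abstract \emph{Information Newton's flow} proposition and the bilinear-form formula \eqref{equ:php} of Proposition \ref{prop:php}, both taken with metric tensor $\mcG(\rho)=\mcG^F(\rho)$. The plan is: first, read off the two evolution equations from the abstract system by substituting the explicit inverse Fisher--Rao metric; second, specialize \eqref{equ:php} to get a preliminary three-term expression for $\int\Phi\,\mcH_E^F(\rho)\Phi\,dx$; third, compress those three terms into the claimed two-term form.

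For the first step, the Information Newton's flow proposition says the flow is equivalent to $\p_t\rho_t-\mcG(\rho_t)^{-1}\Phi_t=0$ together with the Newton direction equation relating $\mcH_E(\rho_t)\Phi_t$ to $\mcG(\rho_t)^{-1}\frac{\delta E}{\delta\rho_t}$. Plugging in $\mcG^F(\rho)^{-1}\Psi=\rho\big(\Psi-\mbE_\rho[\Psi]\big)$ immediately turns the first equation into $\p_t\rho_t-\rho_t\big(\Phi_t-\mbE_{\rho_t}[\Phi_t]\big)=0$ and produces the source term $\rho_t\big(\frac{\delta E}{\delta\rho_t}-\mbE_{\rho_t}[\frac{\delta E}{\delta\rho_t}]\big)$ in the second (one must track the overall sign convention carefully when passing among $\grad E$, $\Hess E$ and $\mcH_E$). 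The only nontrivial content is therefore the exact formula for $\mcH_E^F(\rho)=\mcH_E(\rho)$ under this metric. For later use I also record that $\mcA^F(\rho)$, defined by $\mcA^F(\rho)(\Phi_1,\Phi_2)=\frac{\delta}{\delta\rho}\int\Phi_1\mcG^F(\rho)^{-1}\Phi_2\,dx$, is computed by taking the $L^2$ first variation of the Fisher--Rao cometric $\int\Phi_1\Phi_2\rho\,dx-\mbE_\rho[\Phi_1]\mbE_\rho[\Phi_2]$, which yields exactly $\mcA^F(\rho)(\Phi_1,\Phi_2)=\Phi_1\Phi_2-\mbE_\rho[\Phi_2]\Phi_1-\mbE_\rho[\Phi_1]\Phi_2$, as displayed above.

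For the second and third steps, insert $\mcG=\mcG^F$, $\mcA=\mcA^F$ and $\mcG^F(\rho)^{-1}\Phi=\rho(\Phi-\mbE_\rho[\Phi])$ into \eqref{equ:php}. The double-integral term becomes $\int\!\int\rho(y)(\Phi(y)-\mbE_\rho[\Phi])\frac{\delta^2E}{\delta\rho^2}(x,y)\,dy\,\rho(x)(\Phi(x)-\mbE_\rho[\Phi])\,dx$, which is already the second term of the claimed formula, so nothing is needed there. The first two terms of \eqref{equ:php} become $-\frac12\int\mcA^F(\rho)(\Phi,\Phi)\,\rho\big(\psi-\mbE_\rho[\psi]\big)\,dx+\int\mcA^F(\rho)(\Phi,\psi)\,\rho(\Phi-\mbE_\rho[\Phi])\,dx$ with $\psi:=\frac{\delta E}{\delta\rho}$, and these must be shown to equal $\frac12\int\mcA^F(\rho)(\Phi,\psi)(\Phi-\mbE_\rho[\Phi])\rho\,dx$. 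Equivalently, it suffices to prove the identity
\[
\int\mcA^F(\rho)(\Phi,\psi)\,\rho\big(\Phi-\mbE_\rho[\Phi]\big)\,dx=\int\mcA^F(\rho)(\Phi,\Phi)\,\rho\big(\psi-\mbE_\rho[\psi]\big)\,dx.
\]
I would establish this by centering: set $\widetilde\Phi:=\Phi-\mbE_\rho[\Phi]$ and $\widetilde\psi:=\psi-\mbE_\rho[\psi]$, note that $\mcA^F(\rho)(\Phi,\psi)=\widetilde\Phi\widetilde\psi-\mbE_\rho[\Phi]\,\mbE_\rho[\psi]$ and $\rho(\Phi-\mbE_\rho[\Phi])=\rho\widetilde\Phi$, and use $\int\rho\widetilde\Phi\,dx=\int\rho\widetilde\psi\,dx=0$; both sides then collapse to $\int\rho\,\widetilde\Phi^{\,2}\widetilde\psi\,dx$. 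Substituting this identity back into the specialized \eqref{equ:php} gives precisely the stated expression for $\int\Phi\,\mcH_E^F(\rho)\Phi\,dx$.

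The main obstacle is this last compression: the passage from the generic three-term formula \eqref{equ:php} to the two-term Fisher--Rao formula is not automatic and genuinely uses the algebraic structure of $\mcG^F$ and $\mcA^F$ (concretely, the fact that everything becomes totally symmetric once $\Phi$ and $\psi$ are replaced by their $\rho$-centered versions), together with the bookkeeping of signs. As a cross-check one can instead bypass \eqref{equ:php} and differentiate $E(\hat\rho_s)$ twice directly along the explicit Fisher--Rao geodesic $\p_s\hat\rho_s=\hat\rho_s\big(\Phi_s-\mbE_{\hat\rho_s}[\Phi_s]\big)$, $\p_s\Phi_s=-\frac12\mcA^F(\hat\rho_s)(\Phi_s,\Phi_s)$ displayed above; the remaining manipulations are routine.
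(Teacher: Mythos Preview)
Your proposal is correct and follows essentially the same route as the paper: both reduce everything to Proposition~\ref{prop:php} and then prove the single identity $\int\mcA^F(\rho)(\Phi,\Phi)\,\mcG^F(\rho)^{-1}\tfrac{\delta E}{\delta\rho}\,dx=\int\mcA^F(\rho)(\Phi,\tfrac{\delta E}{\delta\rho})\,\mcG^F(\rho)^{-1}\Phi\,dx$. The paper expands both sides term by term and matches them, whereas your centering substitution $\widetilde\Phi=\Phi-\mbE_\rho[\Phi]$, $\widetilde\psi=\psi-\mbE_\rho[\psi]$ collapses both sides to $\int\rho\,\widetilde\Phi^{\,2}\widetilde\psi\,dx$ in one line; this is a slightly cleaner execution of the same computation, not a different argument.
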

\begin{proof}
Based on Proposition \ref{prop:php}, we only need to prove that
\begin{equation*}
    \int \mcA^F(\rho)(\Phi,\Phi)\mcG^F(\rho)^{-1}\frac{\delta E}{\delta \rho} dx=\int   \mcA^F(\rho)\pp{\Phi,\frac{\delta E}{\delta \rho}} \mcG^F(\rho)^{-1}\Phi dx.
\end{equation*}
The left hand side follows
\begin{equation*}
    \begin{aligned}
    &\int \mcA^F(\rho)(\Phi,\Phi)\mcG^F(\rho)^{-1}\frac{\delta E}{\delta \rho} dx\\
    =&\int \pp{\Phi^2-2\mbE_\rho[\Phi]\Phi}\pp{\frac{\delta E}{\delta \rho}-\mbE_\rho\bb{\frac{\delta E}{\delta \rho}}}\rho dx\\
    =&\int \pp{\Phi-\mbE_\rho[\Phi]}\pp{\frac{\delta E}{\delta \rho}-\mbE_\rho\bb{\frac{\delta E}{\delta \rho}}}\Phi\rho dx-\mbE_\rho[\Phi]\int \pp{\frac{\delta E}{\delta \rho}-\mbE_\rho\bb{\frac{\delta E}{\delta \rho}}}\Phi\rho dx.
    \end{aligned}
\end{equation*}
The right hand side satisfies
\begin{equation*}
    \begin{aligned}
    &\int   \mcA^F(\rho)\pp{\Phi,\frac{\delta E}{\delta \rho}} \mcG^F(\rho)^{-1}\Phi dx\\
    =&\int \pp{\Phi\frac{\delta E}{\delta \rho}-\mbE_\rho\bb{\frac{\delta E}{\delta \rho}}\Phi-\mbE_\rho[\Phi]\frac{\delta E}{\delta \rho} }\pp{\Phi-\mbE_\rho[\Phi]}\rho dx\\
    =&\int \pp{\Phi-\mbE_\rho[\Phi]}\pp{\frac{\delta E}{\delta \rho}-\mbE_\rho\bb{\frac{\delta E}{\delta \rho}}}\Phi\rho dx-\mbE_\rho[\Phi]\int \frac{\delta E}{\delta \rho }\pp{\Phi-\mbE_\rho[\Phi]}\rho dx.
    \end{aligned}
\end{equation*}
We also observe that
\begin{equation*}
    \begin{aligned}
    &\int \frac{\delta E}{\delta \rho }\pp{\Phi-\mbE_\rho[\Phi]}\rho dx=\mbE_\rho \bb{\Phi \frac{\delta E}{\delta \rho }}-\mbE_\rho[\Phi]\mbE_\rho\bb{\frac{\delta E}{\delta \rho }}=\int \pp{\frac{\delta E}{\delta \rho}-\mbE_\rho\bb{\frac{\delta E}{\delta \rho}}}\Phi\rho dx.
    \end{aligned}
\end{equation*}
Hence, the left hand side is equal to the right hand side.
\end{proof}

\begin{example}[Fisher-Rao Newton's flow of KL divergence]
Suppose that $E(\rho)$ evaluates the KL divergence from $\rho$ to $\rho^*\sim\exp(-f)$. This objective functional also writes
\begin{equation*}
E(\rho) = \int (\rho\log \rho+f\rho )dx.
\end{equation*}
We derive that
\begin{equation*}
\frac{\delta E}{\delta \rho}(x) = \log \rho(x)+f+1,\quad \frac{\delta^2 E}{\delta \rho ^2}(x,y) = \frac{\delta (x-y)}{\rho(y)}.
\end{equation*}
Based on Proposition \ref{prop:fnf}, we can compute that \eqref{equ:php} is equivalent to
\begin{equation*}
\begin{aligned}
\int \Phi \mcH_E(\rho ) \Phi  dx=&\frac{1}{2} \int \pp{\Phi ^2-2\mbE_{\rho }[\Phi ]\Phi }\pp{\log \rho  +f -\mbE_{\rho }[\log \rho +f]}\rho  dx\\
&+\int \pp{\Phi (x)-\mbE_{\rho }[\Phi ]}\rho (x)\int \frac{\delta(y-x)}{\rho (y)}\pp{\Phi (y)-\mbE_{\rho }[\Phi ]}\rho (y)dy dx\\
=&\frac{1}{2}\int \pp{\log \rho  +f -\mbE_{\rho }[\log \rho +f]}\Phi ^2\rho dx\\
&- \mbE_{\rho}[\Phi] \int\pp{\log \rho  +f -\mbE_{\rho }[\log \rho +f]}\Phi \rho dx\\
&+ \int \Phi ^2\rho dx-\pp{\int \Phi \rho dx}^2.
\end{aligned}
\end{equation*}
Hence, the operator $\mcH^F_E(\rho)$ follows
\begin{equation*}
\begin{aligned}
\mcH^F_E(\rho)\Phi = &\frac{1}{2}\pp{\log \rho +f -\mbE_{\rho}[\log \rho+f]}\Phi\rho-\frac{1}{2}\pp{\int\pp{\log \rho +f -\mbE_{\rho}[\log \rho+f]}\Phi\rho dy}\rho\\
&-\frac{1}{2}\mbE_{\rho}[\Phi]\pp{\log \rho +f -\mbE_{\rho}[\log \rho+f]}\rho+\Phi\rho-\mbE_{\rho}[\Phi]\rho\\
=&\frac{1}{2}\pp{2+\log \rho +f -\mbE_{\rho}[\log \rho+f]}\pp{\Phi-\mbE_{\rho}[\Phi]}\rho\\
&-\frac{1}{2}\pp{\mbE_{\rho}[\Phi(\log\rho +f)]-\mbE_{\rho}[\Phi]\mbE_{\rho}[(\log\rho +f)]}\rho.\\
\end{aligned}
\end{equation*}
\end{example}

\begin{example}[Fisher-Rao Newton's flow of interaction energy]
Consider an interaction energy
\begin{equation*}
E(\rho) = \frac{1}{2}\int\int \rho(x)W(x,y)\rho(y)dxdy,
\end{equation*}
where $W(x,y)=W(y,x)$ is a kernel function. The interaction energy also formulates the MMD, see details in \citep{aktst}. We can compute that
\begin{equation*}
    \frac{\delta E}{\delta \rho}(x) = \int W(x,y)\rho(y)dy,\quad \frac{\delta^2 E}{\delta \rho^2}(x,y) = W(x,y).
\end{equation*}
We denote $(W*\rho)(x)=\int W(x,y)\rho(y)dx$. Based on Proposition \ref{prop:fnf}, it follows
\begin{equation*}
\begin{aligned}
&\int \Phi \mcH^F_E(\rho)\Phi dx \\
= &\frac{1}{2}\int (\Phi^2-2\mbE_\rho[\Phi]\Phi)(W*\rho -\mbE_\rho[W*\rho])\rho dx\\
&+\int\int  (\Phi(y)-\mbE_\rho[\Phi])W(x,y)\rho(y)\rho(x)(\Phi(x)-\mbE_\rho[\Phi]) dydx\\
=&\frac{1}{2}\int \Phi^2(W*\rho -\mbE_\rho[W*\rho])\rho dx-\mbE_\rho[\Phi]\pp{\int \Phi(W*\rho -\mbE_\rho[W*\rho])\rho dx}\\ 
&+\int\int \Phi(x)\rho(x)W(x,y)\Phi(y)\rho(y)dxdy+\pp{\mbE_\rho[\Phi]}^2\pp{\int\int \rho(x)W(x,y)\rho(y)dxdy}\\
&-2\mbE_\rho[\Phi]\pp{\int\int\rho(x)W(x,y)\Phi(x)\rho(y)dxdy}.
\end{aligned}
\end{equation*}
Hence, the operator $\mcH^F_E(\rho)$ satisfies
\begin{equation*}
\begin{aligned}
\mcH^F_E(\rho)\Phi(x) = &\frac{1}{2}(W*\rho -\mbE_\rho[W*\rho])\rho\Phi - \frac{1}{2}\pp{\int \Phi(W*\rho -\mbE_\rho[W*\rho])\rho dy}\rho \\
&-\frac{1}{2}\mbE_\rho[\Phi](W*\rho -\mbE_\rho[W*\rho])\rho+(W*(\rho\Phi))\rho\\
&+\mbE_{\rho}[W*\rho]\mbE_\rho[\Phi]\rho-\mbE_{\rho}[W*(\rho\Phi)]\rho-\mbE_\rho[\Phi](W*\rho)\rho\\
=&\frac{1}{2}(W*\rho -\mbE_\rho[W*\rho])(\Phi-\mbE_\rho[\Phi])\rho-\frac{1}{2}\pp{\mbE_\rho[\Phi (W*\rho)]-\mbE_\rho[\Phi]\mbE_\rho[W*\rho]}\rho\\
&+(W*(\rho\Phi)-\mbE_{\rho}[W*(\rho\Phi)])\rho-\mbE_\rho[\Phi]\pp{(W*\rho)-\mbE_\rho[W*\rho]}\rho.
\end{aligned}
\end{equation*}
\end{example}

\begin{example}[Fisher-Rao Newton's flow of cross entropy]
Suppose that $E(\rho)$ is the cross entropy, i.e., reverse KL divergence. It evaluates the KL divergence from a given density $\rho^*$ to $\rho$
\begin{equation*}
E(\rho) =-\int \log \pp{\frac{\rho}{\rho^*}}\rho^* dx = -\int (\log \rho) \rho^* dx +\int (\log \rho^*) \rho^* dx.
\end{equation*}
It is equivalent to optimize $E(\rho) = -\int (\log \rho) \rho^* dx$.  We compute that
\begin{equation*}
    \frac{\delta E}{\delta \rho}(x) = -\frac{\rho^*(x)}{\rho(x)},\quad \frac{\delta^2 E}{\delta \rho^2}(x,y) = \frac{\rho^*(y)}{\rho^2(y)}\delta(x-y).
\end{equation*}
Proposition \ref{prop:fnf} indicates that
\begin{equation*}
\begin{aligned}
\int \Phi \mcH^F_E(\rho)\Phi dx=&\frac{1}{2}\int (\Phi^2-2\mbE_\rho[\Phi]\Phi)(-\rho^*/\rho +\mbE_\rho[\rho^*/\rho])\rho dx\\
&+\int (\Phi(x)-\mbE_\rho[\Phi])\rho(x)\int \frac{\rho^*(y)}{\rho^2(y)}\delta(x-y)(\Phi(y)-\mbE_\rho[\Phi]) \rho(y)dy dx\\
=&\frac{1}{2}\int \Phi^2(\rho-\rho^*) dx-\mbE_\rho[\Phi]\int \Phi(\rho-\rho^*) dx+\int (\Phi-\mbE_\rho[\Phi])^2\rho^* dx\\
=&\frac{1}{2}(\mbE_\rho[\Phi^2]-\mbE_{\rho^*}[\Phi^2])-\mbE_\rho[\Phi](\mbE_\rho[\Phi]-\mbE_{\rho^*}[\Phi])\\
&+\mbE_{\rho^*}[\Phi^2]-2\mbE_\rho[\Phi]\mbE_{\rho^*}[\Phi]+\pp{\mbE_\rho[\Phi]}^2\\
=&\frac{1}{2}(\mbE_\rho[\Phi^2]+\mbE_{\rho^*}[\Phi^2])-\mbE_\rho[\Phi]\mbE_{\rho^*}[\Phi].
\end{aligned}
\end{equation*}
Hence, the operator $\mcH^F_E(\rho)$ follows
\begin{equation*}
    \mcH^F_E(\rho)\Phi = \frac{1}{2}((\Phi-\mbE_{\rho^*}[\Phi])\rho+(\Phi-\mbE_\rho[\Phi])\rho^*).
\end{equation*}
\end{example}

\subsection{Newton's flows under Wasserstein metric}
\label{ssec:wnf}
For Wasserstein metric, the geodesic curve $\hat \rho_s$ satisfies
\begin{equation*}
\lbb{&\p_s\hat\rho_s+\nabla\cdot(\hat\rho_s\nabla \Phi_s)= 0,\\
&\p_s\Phi_s+\frac{1}{2}\|\nabla\Phi_s\|^2=0.}
\end{equation*}
The bi-linear operator $\mcA^W(\rho)$ follows
\begin{equation*}
\mcA^W( \rho)(\Phi_1,\Phi_2) = \la \nabla \Phi_1, \nabla \Phi_2\ra . 
\end{equation*}

\begin{proposition}[Wasserstein Newton's flow]\label{prop:wnf}
For an objective functional $E\colon\mathcal{P}(\Omega)\rightarrow \mathbb{R}$, the Wasserstein Newton's flow follows
\begin{equation}
\lbb{
&\p_t\rho_t+\nabla\cdot(\rho\nabla\Phi_t) = 0,\\
&\mcH_E^W(\rho_t)\Phi_t - \nabla\cdot\pp{\rho_t\nabla\frac{\delta E}{\delta {\rho_t}}}=0.
}
\end{equation}
Here $\mcH_E(\rho):T_{\rho}^*\mcP(\Omega)\to T_\rho\mcP(\Omega)$ defines a bi-linear form: for $\Phi\in T_\rho^*\mcP(\Omega)$,
\begin{equation}
\begin{split}
    \int \Phi\mcH_E^W(\rho)\Phi dx=&\int\int \lra{\nabla\Phi(x), \nabla_x\nabla_y\frac{\delta^2 E}{\delta\rho^2}(x,y)\nabla\Phi(y)} \rho(x)\rho(y)dxdy\\
&+\int \lra{\nabla \Phi, \nabla^2\frac{\delta E}{\delta  \rho}\nabla \Phi} \rho dx.
\end{split}
\end{equation}
\end{proposition}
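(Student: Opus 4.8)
This is a direct specialization of Proposition~\ref{prop:php} to the Wasserstein metric, followed by integration by parts. The two ingredients I need first are the inverse metric tensor $\mcG^W(\rho)^{-1}\Phi=-\nabla\cdot(\rho\nabla\Phi)$ (from the Wasserstein example) and the bilinear operator $\mcA^W(\rho)$. The latter is obtained straight from its definition: $\mcA^W(\rho)(\Phi_1,\Phi_2)=\frac{\delta}{\delta\rho}\int\Phi_1\mcG^W(\rho)^{-1}\Phi_2\,dx=\frac{\delta}{\delta\rho}\int\rho\langle\nabla\Phi_1,\nabla\Phi_2\rangle\,dx=\langle\nabla\Phi_1,\nabla\Phi_2\rangle$, where the second equality is one integration by parts and the third is the first variation with $\Phi_1,\Phi_2$ held fixed. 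This same computation confirms that the Wasserstein geodesic system quoted above is the specialization of \eqref{equ:geo}, so Proposition~\ref{prop:php} applies verbatim.

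With $g:=\frac{\delta E}{\delta\rho}$, substituting $\mcA^W$ and $\mcG^W(\rho)^{-1}$ into \eqref{equ:php} turns the first two terms into $\frac{1}{2}\int\|\nabla\Phi\|^2\,\nabla\cdot(\rho\nabla g)\,dx$ and $-\int\langle\nabla\Phi,\nabla g\rangle\,\nabla\cdot(\rho\nabla\Phi)\,dx$. I would integrate by parts in both, using $\nabla\|\nabla\Phi\|^2=2\,\nabla^2\Phi\,\nabla\Phi$ and $\nabla\langle\nabla\Phi,\nabla g\rangle=\nabla^2\Phi\,\nabla g+\nabla^2 g\,\nabla\Phi$ together with the symmetry of $\nabla^2\Phi$; the two terms of the form $\int\rho\,(\nabla\Phi)^\top\nabla^2\Phi\,\nabla g\,dx$ then cancel, leaving precisely the local term $\int\langle\nabla\Phi,\nabla^2 g\,\nabla\Phi\rangle\rho\,dx$. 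For the third (nonlocal) term of \eqref{equ:php} I would plug $\mcG^W(\rho)^{-1}\Phi=-\nabla\cdot(\rho\nabla\Phi)$ into both slots and integrate by parts once in $x$ and once in $y$, moving a gradient from each divergence onto $\frac{\delta^2 E}{\delta\rho^2}(x,y)$; this yields $\int\!\int\langle\nabla\Phi(x),\nabla_x\nabla_y\frac{\delta^2 E}{\delta\rho^2}(x,y)\nabla\Phi(y)\rangle\rho(x)\rho(y)\,dxdy$. Combining the two pieces gives the claimed bilinear form for $\int\Phi\,\mcH_E^W(\rho)\Phi\,dx$, and reading off $\mcH_E^W(\rho)$ from it (justified by Lemma~\ref{lemma:unique}, exactly as in Proposition~\ref{prop:php}) identifies the operator. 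Finally, the two flow equations follow from the general Information Newton's flow proposition by substituting $\mcG^W(\rho_t)^{-1}\Phi_t=-\nabla\cdot(\rho_t\nabla\Phi_t)$ and $\mcG^W(\rho_t)^{-1}\frac{\delta E}{\delta\rho_t}=-\nabla\cdot(\rho_t\nabla\frac{\delta E}{\delta\rho_t})$.

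\textbf{Expected obstacle.} The only genuinely delicate step is the bookkeeping in the integrations by parts: verifying that the mixed-Hessian cross terms cancel in the local part, and keeping track of which variable each gradient acts on in the double integration by parts for the nonlocal part. All boundary terms are assumed to vanish under the standing smoothness and decay hypotheses on $\Omega$, $\rho$, and $\frac{\delta E}{\delta\rho}$; making these hypotheses precise (or restricting to $\Omega=\mbR^d$ with sufficient decay, or to a closed manifold) is the only point that needs a word of justification, and no new idea beyond Proposition~\ref{prop:php} is required.
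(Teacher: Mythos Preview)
Your proposal is correct and follows essentially the same route as the paper: specialize Proposition~\ref{prop:php} using $\mcA^W(\rho)(\Phi_1,\Phi_2)=\langle\nabla\Phi_1,\nabla\Phi_2\rangle$ and $\mcG^W(\rho)^{-1}\Phi=-\nabla\cdot(\rho\nabla\Phi)$, integrate by parts in each of the first two terms so that the mixed $(\nabla\Phi)^\top\nabla^2\Phi\,\nabla g$ contributions cancel by symmetry of $\nabla^2\Phi$, and integrate by parts once in $x$ and once in $y$ for the nonlocal term. The paper carries out exactly these computations, and your identification of the flow equations from the general Information Newton's flow proposition is likewise what the paper does.
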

\begin{proof}
Based on integration by parts, we observe that
\begin{equation*}
\begin{aligned}
&\int \mcA^W( \rho)(\Phi,\Phi)\mcG^W( \rho)^{-1}\frac{\delta E}{\delta  \rho} dx\\
=&-\int \|\nabla\Phi\|^2\nabla\cdot\pp{\rho \nabla \frac{\delta E}{\delta  \rho}} dx\\
=&\int \lra{\nabla \frac{\delta E}{\delta  \rho}, \nabla\| \nabla \Phi\|^2} \rho dx\\
=&2\int \lra{\nabla \frac{\delta E}{\delta  \rho}, \nabla^2\Phi \nabla \Phi} \rho dx,
\end{aligned}
\end{equation*}
and 
\begin{equation*}
\begin{aligned}
&\int   \mcA^W( \rho)\pp{\Phi,\frac{\delta E}{\delta  \rho}} \mcG^W( \rho)^{-1}\Phi dx\\
=&-\int \lra {\nabla \Phi, \nabla \frac{\delta E}{\delta  \rho}} \nabla \cdot( \rho\nabla \Phi)dx\\
=&\int \lra{\nabla\lra {\nabla \Phi, \nabla \frac{\delta E}{\delta  \rho}}, \nabla \Phi} \rho dx\\
=&\int \lra{\nabla\Phi, \nabla^2\Phi \nabla \frac{\delta E}{\delta  \rho}} \rho dx+\int \lra{\nabla \Phi, \nabla^2\frac{\delta E}{\delta  \rho}\nabla \Phi} \rho dx.
\end{aligned}
\end{equation*}
Combining above two observations with Proposition \ref{prop:php}, we derive 
\begin{equation*}
\begin{aligned}
    \int \Phi\mcH_E^W( \rho)\Phi dx =& \int \lra{\nabla \Phi, \nabla^2\frac{\delta E}{\delta  \rho}\nabla \Phi} \rho dx \\
    +&\int\int \nabla\cdot( \rho\nabla \Phi)(y)\frac{\delta^2 E}{\delta  \rho^2}(x,y) \nabla\cdot(\rho\nabla \Phi)(x) dxdy\\
    =&\int \int\lra{\nabla \Phi(x),\nabla_x\nabla_y\frac{\delta^2 E}{\delta  \rho^2}(x,y) \nabla \Phi(y)} \rho(x)\rho(y) dxdy\\
    &+\int \lra{\nabla \Phi, \nabla^2\frac{\delta E}{\delta  \rho}\nabla \Phi} \rho dx.
\end{aligned}
\end{equation*}
This proves Proposition \ref{prop:wnf}. 
\end{proof}

\begin{example}[Wasserstein Newton's flow of interaction energy]
Consider an interaction energy
\begin{equation*}
E(\rho) = \frac{1}{2}\int\int \rho(x)W(x,y)\rho(y)dxdy.
\end{equation*}
Combining with previous computations, Proposition \ref{prop:wnf} yields that
\begin{equation*}
\begin{aligned}
\int \Phi \mcH^W_E(\rho)\Phi dx = & \int \int\lra{\nabla \Phi(x),\nabla_x\nabla_yW(x,y) \nabla \Phi(y)} \rho(x)\rho(y) dxdy\\
    &+\int \lra{\nabla \Phi(x), \int \nabla_x^2W(x,y)\rho(y)dy \nabla \Phi(x)} \rho(x) dx\\
=&\frac{1}{2}\mbE_{x,y\sim\rho}\bmbm{\nabla\Phi(x)\\\nabla\Phi(y)}^T\bmbm{\nabla^2_{xx}W(x,y)&\nabla^2_{xy}W(x,y)\\\nabla^2_{yx}W(x,y)&\nabla^2_{yy}W(x,y)}\bmbm{\nabla\Phi(x)\\\nabla\Phi(y)}.
\end{aligned}
\end{equation*}
Based on integration by parts, the operator $\mcH_E^W(\rho)$ is given by
\begin{equation*}
\mcH^W_E(\rho)\Phi= -\nabla\cdot(\rho (\nabla_{xy}^2W*(\rho\nabla\Phi)))-\nabla\cdot(\rho (\nabla^2_{xx} W*\rho) \nabla\Phi).
\end{equation*}
\end{example}

\begin{example}[Wasserstein Newton's flow of cross entropy]
Suppose that $E(\rho)$ evaluates the KL divergence from a given density $\rho^*$ to $\rho$
\begin{equation*}
E(\rho) =-\int \log \pp{\frac{\rho}{\rho^*}}\rho^* dx = -\int (\log \rho) \rho^* dx +\int (\log \rho^*) \rho^* dx.
\end{equation*}
It is equivalent to optimize $E(\rho) = -\int (\log \rho) \rho^* dx$. 
Proposition \ref{prop:wnf} yields
\begin{equation*}
\begin{aligned}
\int \Phi \mcH^W_E(\rho)\Phi dx = & \int \nabla\cdot(\rho(x)\nabla \Phi(x))\int \frac{\rho^*(y)}{\rho^2(y)}\delta(x-y) \nabla\cdot(\rho(y)\nabla \Phi(y))dydx\\
    &-\int \lra{\nabla \Phi(x),  \nabla_x^2\pp{\frac{\rho^*(x)}{\rho(x)}} \nabla \Phi(x)} \rho(x) dx\\
=&\int (\rho^{-1}\nabla\cdot(\rho\nabla \Phi))^2\rho^* dx-\int \lra{\nabla \Phi, \nabla^2\pp{\frac{\rho^*}{\rho}} \nabla \Phi} \rho dx.
\end{aligned}
\end{equation*}
Hence, the operator $\mcH^W_E(\rho)$ satisfies
\begin{equation*}
\mcH_E^W(\rho)\Phi = \nabla\cdot \pp{\rho\nabla\pp{\frac{\rho^*}{\rho^2}\nabla\cdot(\rho\nabla \Phi)}}+\nabla\cdot\pp{\rho\nabla^2\pp{\frac{\rho^*}{\rho}}\nabla\Phi}.
\end{equation*}
\end{example}
\begin{remark}
For simplicity of presentations, we only present the Hessian formulas for Fisher-Rao and Wasserstein information metrics. In fact, there are many interesting generalized Hessian formulas in \cite{DBLP:journals/corr/abs-1907-12546} from Hessian transport metrics. We leave systematic studies of Newton's flows for general metrics in future works. 
\end{remark}

We summarize formulations of Hessian-related operators $\mcH_E(\rho)$ under both Fisher-Rao metric and Wasserstein metric. 
\begin{table}[htbp]
    \centering
    \begin{tabular}{|c|c|}
    \hline
        Objective functional $E(\rho)$&  $\mcH_E^F(\rho)\Phi$ \\\hline
        \makecell*[c]{KL divergence:\\$\int (\rho\log\rho+ f\rho )dx.$}
        &  $\begin{aligned}
&\frac{1}{2}\pp{2+\log \rho +f -\mbE_{\rho}[\log \rho+f]}\pp{\Phi-\mbE_{\rho}[\Phi]}\rho\\
-&\frac{1}{2}\pp{\mbE_{\rho}[\Phi(\log\rho +f)]-\mbE_{\rho}[\Phi]\mbE_{\rho}[(\log\rho +f)]}\rho.
\end{aligned}$
\\\hline
\makecell*[c]{Interaction energy:\\
$\frac{1}{2}\int\int \rho(x)W(x,y)\rho(y)dxdy$}
&$\begin{aligned}
&\frac{1}{2}(W*\rho -\mbE_\rho[W*\rho])(\Phi-\mbE_\rho[\Phi])\rho\\
-&\frac{1}{2}\pp{\mbE_\rho[\Phi (W*\rho)]-\mbE_\rho[\Phi]\mbE_\rho[W*\rho]}\rho\\
+&(W*(\rho\Phi)-\mbE_{\rho}[W*(\rho\Phi)])\rho\\
-&\mbE_\rho[\Phi]\pp{(W*\rho)-\mbE_\rho[W*\rho]}\rho.\end{aligned}$
\\\hline
\makecell*[c]{Reverse KL divergence:\\$\int (\log{\rho^*}-\log \rho) {\rho^*} dx$}
&$\frac{1}{2}(\Phi-\mbE_{\rho^*}[\Phi])\rho+\frac{1}{2}(\Phi-\mbE_\rho[\Phi]){\rho^*}.$
\\\hline
    \end{tabular}
    \caption{The formulation of $\mcH_E^F(\rho)$ under the Fisher-Rao metric.}
    \label{tab:fr_n}
\end{table}

\begin{table}[htbp]
    \centering
    \begin{tabular}{|c|c|}
    \hline
         Objective functional $E(\rho)$& $\mcH_E^W (\rho)\Phi$ \\\hline
         \makecell*[c]{KL divergence:\\$\int (\rho\log\rho+ f\rho )dx.$}
         &$ \nabla^2:(\rho\nabla^2\Phi)-\nabla\cdot(\rho\nabla^2f\nabla \Phi).$
\\\hline
 \makecell*[c]{Interaction energy:\\
$\frac{1}{2}\int\int \rho(x)W(x,y)\rho(y)dxdy$}
&$-\nabla\cdot(  (\nabla_{xy}^2W*(\nabla\Phi\rho)) \rho)-\nabla\cdot( (\nabla^2_{xx} W*\rho_t)\rho \nabla\Phi).$
\\\hline
\makecell*[c]{Reverse KL divergence:\\$\int (\log{\rho^*}-\log \rho) \mu dx$}
&$\nabla\cdot \pp{\rho\nabla\pp{\frac{{\rho^*}}{\rho^2}\nabla\cdot(\rho\nabla \Phi)}}+\nabla\cdot\pp{\rho\nabla^2\pp{\frac{{\rho^*}}{\rho}}\nabla\Phi}.$
\\\hline
    \end{tabular}
    \caption{The formulation of $\mcH_E^W(\rho)$ under the Wasserstein metric.}
    \label{tab:w_n}
\end{table}

\subsection{Wasserstein Newton's flows in Gaussian families}
In this subsection, we study information Newton's flows in Gaussian families with respect to Wasserstein metric. We leave the proof of Proposition \ref{prop:exist} in next subsection. Let $\mbP^n$ and $\mbS^n$ represent the space of symmetric positive definite matrices and symmetric matrices with size $n\times n$ respectively.

We let $\mcN_n^0$ denote multivariate Gaussian densities with zero means. Each $\rho\in \mcN_n^0$ is uniquely determined by its covariance matrix $\Sigma\in \mbP^n$. So we can view $\mcN_n^0\simeq \mbP^n$.   
The Wasserstein metric $\mcG^W(\rho)$ on $\mcP(\mbR^d)$ induces the Wasserstein metric $\mcG^W(\Sigma)$ on $\mbP^n$, 
see \citep{owgot,gomds,wrgop}. For $\Sigma\in \mbP^n$, tangent space and cotangent space follow $$T_{\Sigma}\mbP^n\simeq T^*_{\Sigma}\mbP^n\simeq \mbS^n.$$

\begin{definition}[Wasserstein metric in Gaussian families]
Given $\Sigma\in \mbP^n$, the Wasserstein metric tensor $\mcG^W(\Sigma):\mbS^n\to \mbS^n$ is defined by $$\mcG^W(\Sigma)^{-1}S = 2(\Sigma S+S\Sigma).$$ 
It defines an inner product on the tangent space $T_\Sigma\mbP^n$. Namely, for $A_1,A_2\in T_\Sigma\mbP^n\simeq \mbS^n$
$$
g^W_{\Sigma}(A_1,A_2) = \tr(A_1\mcG^W(\Sigma)A_2)=\tr(S_1\mcG^W(\Sigma)^{-1}S_2)= 4\tr(S_1\Sigma S_2).
$$
Here $S_i\in T_\Sigma^*\mbP^n\simeq \mbS^n$ is the solution to discrete Lyapunov equation
\begin{equation*}
    A_i = 2(\Sigma S_i+S_i\Sigma), \quad i=1,2.
\end{equation*}
\end{definition}
For $\Sigma\in \mbP^n$, there exits a unique solution to discrete Lyapunov equation. Again, we focus on the case where the objective functional $E(\Sigma)$ evaluates the KL divergence from $\rho$ with covariance matrix $\Sigma$ to a target Gaussian density $\rho^*$ with covariance matrix $\Sigma^*$. Then, $E(\Sigma)$ satisfies \eqref{esigma}.
\begin{proposition}[Gradient and Hessian operators in $\mbP^n$]
The gradient operator follows
$$
\grad^W E(\Sigma) = \mcG^W(\Sigma)^{-1}\nabla E(\Sigma) = \Sigma (\Sigma^*)^{-1}+(\Sigma^*)^{-1}\Sigma-2I.
$$
And the Hessian operator satisfies that for all $A\in\mbS^n$,
$$
g_\Sigma^W(A,\Hess^W E(\Sigma) A) = 4\tr(S\Sigma S(\Sigma^*)^{-1})+4\tr(S^2),
$$
where $S$ is the unique solution to $A = 2(\Sigma S+S\Sigma)$.
\end{proposition}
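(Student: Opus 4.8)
The plan is to treat the gradient and Hessian formulas separately, reducing each to elementary matrix calculus once the right geometric objects are identified. \emph{Gradient.} First I would compute the Euclidean gradient of $E$ on $\mbS^n$. Writing $\log\det(\Sigma(\Sigma^*)^{-1})=\log\det\Sigma-\log\det\Sigma^*$ and using $\nabla_\Sigma\tr(\Sigma(\Sigma^*)^{-1})=(\Sigma^*)^{-1}$ and $\nabla_\Sigma\log\det\Sigma=\Sigma^{-1}$ (both legitimate as derivatives over symmetric matrices because $(\Sigma^*)^{-1}$ and $\Sigma^{-1}$ are symmetric), formula \eqref{esigma} gives $\nabla E(\Sigma)=\frac{1}{2}\big((\Sigma^*)^{-1}-\Sigma^{-1}\big)$. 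Applying the inverse metric tensor $\mcG^W(\Sigma)^{-1}S=2(\Sigma S+S\Sigma)$ then yields $\grad^W E(\Sigma)=2\big(\Sigma\,\nabla E(\Sigma)+\nabla E(\Sigma)\,\Sigma\big)=\Sigma(\Sigma^*)^{-1}+(\Sigma^*)^{-1}\Sigma-2I$, which is the asserted formula.

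\emph{Hessian.} The cleanest route reuses the Wasserstein Hessian of the KL divergence recorded in Example \ref{eg:wnf}: for $E(\rho)=\mathrm{D}_{\textrm{KL}}(\rho\|\rho^*)$ with $\rho^*\propto e^{-f}$ and $\sigma=-\nabla\cdot(\rho\nabla\Phi)$,
\[
g^W_\rho\big(\sigma,\Hess^W E(\rho)\sigma\big)=\int\Big(\|\nabla^2\Phi\|_F^2+(\nabla\Phi)^T\nabla^2 f\,\nabla\Phi\Big)\rho\,dx.
\]
I would specialize this to $\rho$ the zero-mean Gaussian with covariance $\Sigma$ and the Gaussian target, so $f(x)=\frac{1}{2} x^T(\Sigma^*)^{-1}x+\mathrm{const}$ and $\nabla^2 f\equiv(\Sigma^*)^{-1}$. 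The key point is that the cotangent representative of a tangent direction $A\in\mbS^n$ is the quadratic potential $\Phi(x)=x^TSx$, with $S$ the solution of the Lyapunov equation $A=2(\Sigma S+S\Sigma)$ — exactly the matrix $S$ attached to $A$ in the definition of $\mcG^W(\Sigma)$. This is consistent because $-\nabla\cdot(\rho\nabla(x^TSx))$ is precisely the density perturbation produced by $\dot\Sigma=2(\Sigma S+S\Sigma)$, and because the Gaussian family is totally geodesic in Wasserstein space (the geodesic equation $\p_s\Phi_s+\frac{1}{2}\|\nabla\Phi_s\|^2=0$ keeps $\Phi_s$ quadratic), so the Hessian of $E$ restricted to $\mbP^n$ agrees with the restriction of the Wasserstein Hessian. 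Substituting $\nabla^2\Phi=2S$ and $\nabla\Phi=2Sx$ gives $\int\big(4\tr(S^2)+4\,x^TS(\Sigma^*)^{-1}Sx\big)\rho(x)\,dx$; evaluating the Gaussian integral via $\int x^TMx\,\rho(x)\,dx=\tr(M\Sigma)$ and using cyclicity of the trace produces $4\tr\big(S\Sigma S(\Sigma^*)^{-1}\big)+4\tr(S^2)$, as claimed.

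A self-contained alternative avoids Example \ref{eg:wnf}: realize the Wasserstein geodesic through $\Sigma$ with velocity $A$ explicitly as $\Sigma_s=(I+2sS)\Sigma(I+2sS)$, so that $\dot\Sigma_0=A$ and $\ddot\Sigma_0=8S\Sigma S$, and differentiate $E(\Sigma_s)$ twice using $\frac{d}{ds}\log\det\Sigma_s=\tr(\Sigma_s^{-1}\dot\Sigma_s)$ and $\frac{d}{ds}\Sigma_s^{-1}=-\Sigma_s^{-1}\dot\Sigma_s\Sigma_s^{-1}$; the $\tr(S\Sigma^{-1}S\Sigma)$ terms coming from $-\log\det$ cancel against the $\tr(\Sigma^{-1}\ddot\Sigma_0)$ term, again leaving $4\tr(S\Sigma S(\Sigma^*)^{-1})+4\tr(S^2)$. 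In either approach the one genuinely nonroutine step is justifying that the abstract Wasserstein metric and Hessian on $\mcP(\mbR^d)$ restrict to the stated objects on $\mbP^n$ — i.e. that Gaussians form a totally geodesic submanifold and that a Gaussian tangent vector is dual to a quadratic potential; the remaining computations (one trace/log-det gradient, one Gaussian second-moment integral, and a trace-cyclicity cancellation) are mechanical.
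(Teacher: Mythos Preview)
Your proposal is correct. The paper's own proof is precisely your ``self-contained alternative'': it parametrizes the Wasserstein geodesic through $\Sigma$ as $\hat\Sigma_s=(I+2sS)\Sigma(I+2sS)$, expands $E(\hat\Sigma_s)$ using the Taylor expansion $\log\det(I+sS)=s\,\tr(S)-\tfrac{s^2}{2}\tr(S^2)+o(s^2)$, and reads off the first- and second-order coefficients in $s$ to obtain both the gradient and the Hessian formulas.

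Your primary routes are slightly different but equally valid. For the gradient you compute the Euclidean gradient $\nabla E(\Sigma)=\tfrac12\big((\Sigma^*)^{-1}-\Sigma^{-1}\big)$ directly and then push it through $\mcG^W(\Sigma)^{-1}$, whereas the paper extracts $\grad^W E$ from the first-order term of the geodesic expansion; your way is shorter and uses only the definition stated in the proposition. For the Hessian, your first route---specializing the infinite-dimensional formula $\int(\|\nabla^2\Phi\|_F^2+(\nabla\Phi)^T\nabla^2 f\,\nabla\Phi)\rho\,dx$ to the quadratic potential $\Phi(x)=x^TSx$ and evaluating a Gaussian second moment---is a clean top-down argument that the paper does not use; its cost is the extra justification (which you correctly flag) that the Gaussian submanifold is totally geodesic so the restricted Hessian agrees with the ambient one. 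The paper's geodesic-expansion route sidesteps that issue entirely by working intrinsically on $\mbP^n$ from the start.
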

Given $A \in \mbS^n$, the geodesic curve $\hat \Sigma_s$ with $\hat\Sigma_s|_{s=0}=\Sigma$ and $\p_s\hat\Sigma_s|_{s=0}=A$ follows $\hat\Sigma_s = (I+2sS)\Sigma(I+2sS)$, where $S=\mcG(\Sigma)^{-1}A$ is the solution to $A = 2(\Sigma S+S\Sigma)$. We can compute that 
$$
E(\hat\Sigma_s)=\frac{1}{2}(\tr((I+2sS)\Sigma (I+2sS)(\Sigma^*)^{-1})-n-\log\det((I+2sS)\Sigma (I+2sS)(\Sigma^*)^{-1})).$$
The Taylor expansion of $\log\det(I+sS)$ w.r.t. $s$ satisfies
$$
\log\det(I+sS) = s\tr(S)-\frac{s^2}{2}\tr(S^2)+o(s^2).
$$
Hence, the first-order derivative follows
$$
\begin{aligned}
\left.\frac{\p }{\p s} E(\Sigma(s)) \right|_{s=0}= &\tr(S\Sigma (\Sigma^*)^{-1})+\tr(\Sigma S(\Sigma^*)^{-1})-2\tr(S)\\
=&\tr\pp{S\pp{\Sigma (\Sigma^*)^{-1}+(\Sigma^*)^{-1}\Sigma-2I}}.
\end{aligned}
$$
By the definition $\left.\frac{\p }{\p s} E(\Sigma(s)) \right|_{s=0}=\tr(S\grad E(\Sigma))$, this yields $\grad E(\Sigma) = \Sigma (\Sigma^*)^{-1}+(\Sigma^*)^{-1}\Sigma-2I$ and the second-order derivative follows
$$
\left.\frac{\p^2 }{\p s^2} E(\Sigma(s)) \right|_{s=0} = 4\tr(S\Sigma S(\Sigma^*)^{-1})+4\tr(S^2).
$$
This completes the proof.

Similarly, let us consider the linear self-adjoint operator $\mcH_E^W(\Sigma):\mbS^n\to \mbS^n$, which defines a bi-linear form
$$
\tr(S\mcH_E^W(\Sigma)S) = g_\Sigma^W(A,\Hess^WE(\Sigma) A) =4\tr(S\Sigma S(\Sigma^*)^{-1})+4\tr(S^2).
$$
We can compute that $\mcH_E(\Sigma)$ is uniquely defined by
$$
\mcH_E(\Sigma)S = 2\Sigma S(\Sigma^*)^{-1}+2(\Sigma^*)^{-1}S\Sigma+4S,\quad \forall S\in \mbS^n.
$$
Because $\tr(S\mcH_E(\Sigma)S) =  4\tr(S\Sigma S(\Sigma^*)^{-1})+4\tr(S^2)>0$ for $S\neq 0, S\in \mbS^n$, $\mcH_E$ is injective and invertible. Now, we are ready to present the Newton's flow of KL divergence in Gaussian families.
\begin{proposition}
The Newton's flow of KL divergence in Gaussian families follows
\begin{equation}\label{equ:wnf_gauss}
\left\{  \begin{aligned}
    &\dot \Sigma_t -2(S\Sigma_t+\Sigma S_t)=0,\\
    &2\Sigma_t S_t(\Sigma^*)^{-1}+2(\Sigma^*)^{-1}S_t\Sigma_t+4S_t = -(\Sigma_t (\Sigma^*)^{-1}+(\Sigma^*)^{-1}\Sigma_t-2I).
    \end{aligned}\right.
\end{equation}
\end{proposition}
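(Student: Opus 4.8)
The plan is to obtain the system \eqref{equ:wnf_gauss} by specializing the general information Newton's flow \eqref{equ:new_prob} (in its equivalent two-equation form) to the finite-dimensional Riemannian manifold $(\mbP^n,\mcG^W(\Sigma))$, which is exactly the zero-mean Gaussian submanifold $\mcN_n^0\simeq\mbP^n$ equipped with the induced Wasserstein metric, and then inserting the gradient and Hessian formulas established immediately above.

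First I would write the information Newton's flow as a curve $\rho_t$ with dual variable $\Phi_t$ satisfying $\p_t\rho_t-\mcG(\rho_t)^{-1}\Phi_t=0$ and $\mcH_E(\rho_t)\Phi_t+\mcG(\rho_t)^{-1}\frac{\delta E}{\delta\rho_t}=0$. On the Gaussian family the state $\rho_t$ is encoded by its covariance $\Sigma_t\in\mbP^n$, the tangent vector $\p_t\rho_t$ by $\dot\Sigma_t\in\mbS^n$, and the cotangent vector $\Phi_t$ by $S_t\in\mbS^n$ (the solution of the associated Lyapunov equation); the first variation $\frac{\delta E}{\delta\rho_t}$ is replaced by the Euclidean gradient $\nabla E(\Sigma_t)$, and the operators $\mcG(\rho_t)^{-1}$, $\mcH_E(\rho_t)$ are replaced by $\mcG^W(\Sigma)^{-1}S=2(\Sigma S+S\Sigma)$ and $\mcH_E(\Sigma)S=2\Sigma S(\Sigma^*)^{-1}+2(\Sigma^*)^{-1}S\Sigma+4S$ respectively. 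Substituting, the first equation becomes $\dot\Sigma_t=\mcG^W(\Sigma_t)^{-1}S_t=2(\Sigma_t S_t+S_t\Sigma_t)$, which is the first line of \eqref{equ:wnf_gauss}. For the second equation, using $\mcG^W(\Sigma_t)^{-1}\nabla E(\Sigma_t)=\grad^W E(\Sigma_t)=\Sigma_t(\Sigma^*)^{-1}+(\Sigma^*)^{-1}\Sigma_t-2I$ and moving the gradient term to the right-hand side gives precisely the second line of \eqref{equ:wnf_gauss}; that this algebraic (Lyapunov-type) equation determines $S_t$ uniquely follows from the strict positivity $\tr(S\mcH_E(\Sigma)S)>0$ for $S\neq 0$ shown just above, i.e.\ $\mcH_E(\Sigma)$ is invertible.

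The step that genuinely needs justification, rather than a routine substitution, is the legitimacy of running the Newton's flow inside the Gaussian submanifold: one must know that a flow started at a Gaussian density stays Gaussian --- so that the dual variable $\Phi_t$ is quadratic and the reduction to $(\mbP^n,\mcG^W)$ is faithful --- and that the resulting matrix ODE keeps $\Sigma_t$ positive definite for all $t\geq 0$. Both facts are exactly the content of Proposition \ref{prop:exist}, which constructs a Gaussian $\rho_t$ together with a quadratic potential $\Phi_t(x)=x^TS_tx+C(t)$ solving the density-space equation \eqref{equ:wnf_kl} and shows $\Sigma_t$ remains well-defined and positive definite. I would therefore invoke Proposition \ref{prop:exist}, observing that the pair $(\Sigma_t,S_t)$ appearing there is precisely the solution of \eqref{equ:wnf_gauss}, which closes the argument.
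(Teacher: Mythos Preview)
Your proposal is correct and follows essentially the same route as the paper: write the Newton's flow in the split form $\dot\Sigma_t=\mcG^W(\Sigma_t)^{-1}S_t$, $\mcH_E^W(\Sigma_t)S_t=-\grad^W E(\Sigma_t)$, and substitute the explicit formulas for $\mcG^W(\Sigma)^{-1}$, $\grad^W E(\Sigma)$, and $\mcH_E(\Sigma)$ derived just before the proposition. The only difference is that you additionally invoke Proposition~\ref{prop:exist} to justify that the density-space flow stays Gaussian; the paper does not do this here, since it treats the proposition purely as the intrinsic Newton's flow on $(\mbP^n,\mcG^W)$ and leaves the consistency with \eqref{equ:wnf_kl} to Proposition~\ref{prop:exist} separately.
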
 
\begin{proof}
The Newton's flow follows
$$
\dot \Sigma_t-(\Hess^W E(\Sigma_t))^{-1}\grad^W E(\Sigma_t)=0. 
$$
We note that $\Hess^W E(\Sigma) \mcG^W(\Sigma)^{-1} = \mcH_E^W(\Sigma)$, which implies
$$
(\Hess^W E(\Sigma))^{-1} = \mcG^W(\Sigma)^{-1}\mcH_E^W(\Sigma)^{-1}.
$$
Hence, we can reformulate the Newton's flow by
\begin{equation*}
\left\{  \begin{aligned}
    &\dot \Sigma_t -\mcG^W(\Sigma_t)^{-1}S_t=0,\\
    &\mcH_E^W(\Sigma_t)S_t=-\grad^W E(\Sigma_t).
    \end{aligned}\right.
\end{equation*}
From the formulations of $\mcG(\Sigma)^{-1}$, $\grad^W E(\Sigma)$ and $\mcH_E(\Sigma)$, we obtain \eqref{equ:wnf_gauss}.
\end{proof}
\begin{example}
In one dimensional case, the second equation in \eqref{equ:wnf_gauss} has an explicit solution $S_t=-\frac{(\Sigma^*)^{-1}\Sigma_t-1}{2((\Sigma^*)^{-1}\Sigma_t+1)}$. Let $\Sigma_t = Y_t^2$, where $Y_t>0$. Then, the first equation in \eqref{equ:wnf_gauss} turns to
$$
2Y_t\dot Y_t+4Y_t^2\frac{(\Sigma^*)^{-1}Y_t^2-1}{2((\Sigma^*)^{-1}Y_t^2+1)}=0,
$$
or equivalently,
\begin{equation}\label{equ:new_gauss_1d}
    \dot Y_t +\frac{(\Sigma^*)^{-1}Y_t-Y_t^{-1}}{(\Sigma^*)^{-1}+Y_t^{-2}}=0.
\end{equation}
Let $f(Y) = \frac{1}{2}((\Sigma^*)^{-1}Y^2-1-\log((\Sigma^*)^{-1}Y^2))$. Then, we have $\nabla f(Y) = (\Sigma^*)^{-1}Y-Y^{-1}$ and $\nabla^2 f(Y) = (\Sigma^*)^{-1}+Y^{-2}$. Hence, the Newton's flow \eqref{equ:new_gauss_1d} coincides with Newton's flow of $f(X)$ in Euclidean space. We also note that \eqref{equ:new_gauss_1d} is identical to the evolution of $\Sigma_t$ in Proposition \ref{prop:gauss1d} by substituting $\Sigma_t = Y_t^2$.
\end{example}

\subsection{Proof of Proposition \ref{prop:exist}}
We first prove that $\Sigma_t$ is positive definite. We formulate that
$$
\begin{aligned}
&\p_tE(\Sigma_t) = \tr(\p_t\Sigma_t\nabla E(\Sigma_t))=2\tr(S_t\Sigma_t((\Sigma^*)^{-1}-\Sigma_t^{-1}))\\
=&\tr(S_t(\Sigma_t (\Sigma^*)^{-1}+(\Sigma^*)^{-1}\Sigma_t-2I))=-\tr(S(2\Sigma S_t(\Sigma^*)^{-1}+2(\Sigma^*)^{-1}S_t\Sigma_t+4S_t))\\
=&-4\tr(S_t\Sigma_t S_t(\Sigma^*)^{-1})-4\tr(S_t^2)\leq 0.
\end{aligned}
$$
As a result, $E(\Sigma_t)$ is non-increasing. Applying the idea of proof in \citep[Theorem 1]{aigf}, we can establish that $\Sigma_t$ is positive definite. Then, we examine that $\Phi_t$ satisfies \eqref{equ:wnf_kl}. We observe that
$$
\begin{aligned}
&\nabla^2:(\rho_t\nabla^2\Phi_t)-\nabla\cdot(\rho_t\nabla^2f\nabla \Phi_t) - \nabla\cdot(\rho_t\nabla f)-\Delta \rho_t\\
=&2\nabla^2:(S_t\rho_t(x))-2\nabla\cdot(\rho_t(x)(\Sigma^*)^{-1}S_tx)-\nabla\cdot(\rho_t(x)(\Sigma^*)^{-1}x)-\Delta \rho_t. 
\end{aligned}
$$
We note that $\nabla\rho_t(x)=-\Sigma_t^{-1}x\rho_t(x)$ and $\nabla^2\rho_t(x)=-\Sigma_t^{-1}\rho_t(x)+\Sigma_t^{-1}xx^T\Sigma_t^{-1}\rho_t(x)$.
Hence, we derive all four terms in the above equation as follows. First, it is easy to observe that
$$
\begin{aligned}
&\nabla^2:(\rho_t(x)S_t)=\tr(S_t\nabla^2\rho_t(x)),\quad -\Delta \rho_t=-\nabla^2:(\rho_t I) = -\tr(\nabla^2\rho_t(x)).
\end{aligned}
$$
We can also compute that
$$
\begin{aligned}
&-\nabla\cdot(\rho_t(\Sigma^*)^{-1}S_tx)\\
= & -\sum_{i=1}^n \p_i (\rho_t(x) WS_tx)_i\\
=&-\sum_{i=1}^n\lb\rho_t(x) \p_i((\Sigma^*)^{-1}S_tx)_i+(WS_tx)_i\p_i \rho_t(x)\rb\\
=&-\rho_t(x) \lb\tr((\Sigma^*)^{-1}S_t)+((\Sigma^*)^{-1}S_tx)^T(-\Sigma_t^{-1}x)\rb\\
=&-\rho_t(x) \tr(S_t(\Sigma^*)^{-1}(I-\Sigma_t^{-1}xx^T))\\
=&\frac{1}{2}\tr((\Sigma_tS_t(\Sigma^*)^{-1}+(\Sigma^*)^{-1}S_t\Sigma_t)\nabla^2\rho_t(x)).
\end{aligned}
$$
Taking $S_t=I$ into the above equation yields
$$
-\nabla\cdot(\rho_t(\Sigma^*)^{-1}x) = \frac{1}{2}\tr((\Sigma_t(\Sigma^*)^{-1}+(\Sigma^*)^{-1}\Sigma_t)\nabla^2\rho_t(x)).
$$
Because $(\Sigma_t,S_t)$ satisfies \eqref{equ:wnf_gauss}, we have
$$
\begin{aligned}
&2\nabla^2:(S_t\rho_t(x))-2\nabla\cdot(\rho_t(x)(\Sigma^*)^{-1}S_tx)-\nabla\cdot(\rho_t(x)(\Sigma^*)^{-1}x)-\Delta \rho_t\\
=&\tr((2S_t+\Sigma_tS_t(\Sigma^*)^{-1}+(\Sigma^*)^{-1}S_t\Sigma_t+\Sigma_t(\Sigma^*)^{-1}+(\Sigma^*)^{-1}\Sigma_t-2I)\nabla^2\rho_t(x))\\
=&0.
\end{aligned}
$$
This completes the proof.

\section{Details in section \ref{sec:nld}}
In this section, we present detailed discussion of Wasserstein Newton's flow and Newton's Langevin dynamics with particular examples.
\subsection{Connections and differences with HAMCMC}
\label{ssec:cmp_hamcmc}
HAMCMC approximates the dynamics of 
$$
dX_t = -(\nabla^2 f(X_t))^{-1}\pp{\nabla f(X_t)+\Gamma(X_t)}dt+\sqrt{2\nabla^2 f(X_t)^{-1}}dB_t,
$$
where $\Gamma_i(x) =\sum_{j=1}\frac{\p}{\p x_j} \pp{\pp{\nabla^2 f(X_t)}^{-1}}_{i,j} $. Here $\Gamma(x)$ is a correction term to ensure that $\rho_t$ converges to $\rho^*$. 
The evolution of $\rho_t$ follows
$$
\p_t\rho_t = \nabla\cdot\pp{\pp{(\nabla^2 f)^{-1}\nabla f+\Gamma}\rho_t}+\nabla^2:\pp{(\nabla^2 f)^{-1}\rho_t}.
$$
We formulate the above equation as
\begin{equation}\label{equ:sqn}
\p_t\rho_t = \nabla\cdot(\rho_t(\nabla^2 f)^{-1}(\nabla f+\nabla\log \rho_t))=\nabla\cdot(\rho_t\mfv_t),
\end{equation}
where we denote $\mfv_t=(\nabla^2 f)^{-1}(\nabla f+\nabla\log \rho_t)$. 
Moreover, $\mfv_t$ satisfies
$$
-\nabla\cdot(\rho_t\nabla^2f\mfv_t) - \nabla\cdot(\rho_t\nabla f)-\Delta \rho_t=0.
$$
On the other hand, replacing $\nabla\Phi_t^{\operatorname{Newton}}$ by $\mfv_t^{\operatorname{Newton}}$ in the Newton's direction equation \eqref{equ:wnf_kl} yields
$$
\nabla^2:(\rho_t\nabla \mfv_t^{\operatorname{Newton}})-\nabla\cdot(\rho_t\nabla^2f\mfv_t^{\operatorname{Newton}}) - \nabla\cdot(\rho_t\nabla f)-\Delta \rho_t=0.
$$
Hence, $\mfv_t\neq \mfv_t^{\operatorname{Newton}}$. Then, \eqref{equ:sqn} is different from information Newton's flow because the term $\nabla^2:(\rho_t\nabla \mfv_t^{\operatorname{Newton}})$ is not considered. 

\subsection{Connections and differences with Newton's flows in Euclidean space}


We recall that the density evolution of particle's gradient flow in Euclidean space corresponds to the Wasserstein gradient flow \citep{otoan}. We notice that this relationship does not hold for the Wasserstein Newton's flow.

Consider an objective function:
$$E(\rho) = \int \rho(x) f(x)dx,$$
where $f(x)$ is a given smooth function. Here we notice that minimize $\rho$ for $E(\rho)$ in probability space is equivalent to minimize $x$ for $f(x)$ in Euclidean space. Namely, the support of the optimal solution $\rho$ contains all global minimizers of $f(x)$. 
The gradient flow in Euclidean space of each particle follows
$$
dX_t=-\nabla f(X_t)dt,
$$
A known fact is that the density evolution of particles satisfies the following continuity equation
$$
\p_t\rho_t= \nabla \cdot(\rho_t \nabla f)=-\grad^WE(\rho_t),
$$
which is the Wasserstein gradient flow of $E(\rho)$ in probability space.

We next show that Newton's flow in Euclidean space of each particle does not coincide with the Wasserstein Newton's flow in probability space.
For simplicity, we assume that $f(x)$ is strictly convex so $\nabla^2 f(x)$ is invertible for all $x$. 
Here, the Euclidean Newton's flow of each particle follows
$$
dX_t = -(\nabla^2f(X_t))^{-1}\nabla f(X_t)dt.
$$
The density evolution of particles satisfies the continuity equation
\begin{equation}\label{equ:newt_e}
    \p_t\rho_t= \nabla \cdot(\rho_t (\nabla^2 f)^{-1} \nabla f).
\end{equation}
On the other hand, the Wasserstein Newton's flow writes
\begin{equation}\label{equ:newt_w}
    \p_t\rho_t+\nabla\cdot(\rho_t\nabla \Phi_t^{\operatorname{Newton}})=0,
\end{equation} 
where $\Phi_t^{\operatorname{Newton}}$ is the unique solution to  
\begin{equation}\label{equ:wnf_beta}
    -\nabla\cdot(\rho_t\nabla^2f\nabla \Phi) - \nabla\cdot(\rho_t\nabla f)=0.
\end{equation}

We note that in general equation \eqref{equ:newt_e} can be different from equation \eqref{equ:newt_w}. Later on in Lemma \ref{lem:decomp}, we formulate the following Hodge decomposition of the Euclidean Newton's direction 
$$
-(\nabla^2f)^{-1}\nabla f=\nabla\Phi_t^{\operatorname{Newton}}+\bm{\xi}_t,
$$ 
where $\nabla\cdot(\rho_t \nabla^2 f \bm{\xi}_t)=0$. Here, the constraint on $\bm{\xi}_t$ does not necessarily ensure that $\nabla\cdot(\rho_t \bm{\xi}_t)=0$. Hence, equation \eqref{equ:newt_e} can be different from equation \eqref{equ:newt_w}. 

\begin{remark}\label{rmk:xi}
In one dimensional case or $f$ is a quadratic function, there exists $\Phi^{\operatorname{Newton}}$, such that $-(\nabla^2 f)^{-1}\nabla f=\nabla\Phi^{\operatorname{Newton}}$. Hence
equation \eqref{equ:newt_e} is same as equation \eqref{equ:newt_w}. We also show an example of $\bm{\xi}\neq0$. Let $\Omega=\mbR^2$ and we define
$$
f(x) = \log(\exp(x_1)+\exp(x_2))+\frac{\lambda}{2} (x_1^2+x_2^2),
$$
where $\lambda>0$ is a parameter. For simplicity, we denote $p_1=\exp(x_1)/(\exp(x_1)+\exp(x_2))$ and $p_2=\exp(x_1)/(\exp(x_1)+\exp(x_2))$. Then, we can compute that the gradient and Hessian of $f(x)$ follows
$$
\nabla f(x) = \bmbm{p_1+\lambda x_1\\p_2+\lambda x_2},\quad \nabla^2 f(x)=\bmbm{p_1p_2+\lambda&-p_1p_2\\-p_1p_2&p_1p_2+\lambda}.
$$
Because $p_1p_2+\lambda>0$ and $\det (\nabla^2f(x))=\lambda^2+2\lambda p_1p_2>0$, $\nabla^2 f(x)$ is positive definite. We note that
$$
\begin{aligned}
(\nabla^2f(x))^{-1}\nabla f(x) = &\frac{1}{\lambda^2+2\lambda p_1p_2}\bmbm{p_1p_2+\lambda&p_1p_2\\p_1p_2&p_1p_2+\lambda}\bmbm{p_1+\lambda x_1\\p_2+\lambda x_2}\\
=&\frac{1}{\lambda^2+2\lambda p_1p_2}\bmbm{p_1p_2(1+\lambda(x_1+x_2))+\lambda(p_1+\lambda x_1)\\p_1p_2(1+\lambda(x_1+x_2))+\lambda(p_2+\lambda x_2)}\\
=&:\bmbm{F_1(x)\\F_2(x)}.
\end{aligned}
$$
If $(\nabla^2f(x))^{-1}\nabla f(x) $ is a gradient vector field, we shall have
$$
\p_{x_2} F_1(x) = \p_{x_1} F_2(x).
$$
However, we can examine that
\small{
$$
\p_{x_2} F_1(x) = \frac{p_1p_2}{\lambda+2p_1p_2}\pp{1+{p_1(1+\lambda(x_1+x_2))}+\frac{2\lambda p_1(\lambda (p_1+\lambda x_1)+p_1p_2(1+\lambda(x_1+x_2)))}{\lambda^2+2\lambda p_1p_2}}.
$$}
\small{
$$
\p_{x_1} F_2(x) = \frac{p_1p_2}{\lambda+2p_1p_2}\pp{1+{p_2(1+\lambda(x_1+x_2))}+\frac{2\lambda p_2(\lambda (p_2+\lambda x_2)+p_1p_2(1+\lambda(x_1+x_2)))}{\lambda^2+2\lambda p_1p_2}}.
$$}
This indicates that $(\nabla^2f(x))^{-1}\nabla f(x) $ is not a gradient vector field. Hence, $\bm{\xi}\neq 0$. 
 \end{remark}

\begin{lemma}\label{lem:decomp}
For given $\rho\in\mcP(\mbR^d)$, there exists a unique $\Phi\in T_\rho^* \mcP(\mbR^d)$ (up to a constant shrift) and a vector field $\bm{\xi}:\mbR^d\to\mbR^d$ satisfying $\nabla\cdot(\rho \nabla^2 f \bm{\xi})=0$ such that
$$-(\nabla^2f(x))^{-1}\nabla f(x)=\nabla\Phi(x)+\bm{\xi}(x).$$
\end{lemma}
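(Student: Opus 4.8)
The plan is to identify $\Phi$ as the solution of the weighted elliptic equation implicit in the constraint on $\bm{\xi}$. Setting $\bm{\xi} := -(\nabla^2 f)^{-1}\nabla f - \nabla\Phi$, the requirement $\nabla\cdot(\rho\nabla^2 f\,\bm{\xi}) = 0$ becomes
\[
\nabla\cdot(\rho\nabla^2 f\,\nabla\Phi) + \nabla\cdot(\rho\nabla f) = 0,
\]
which is exactly equation \eqref{equ:wnf_beta}. So Lemma \ref{lem:decomp} reduces to showing that \eqref{equ:wnf_beta} has a solution $\Phi$, unique up to an additive constant, after which one simply reads off $\bm{\xi}$ from the displayed identity. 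I would prove solvability by the standard variational (Lax--Milgram) argument. Assuming $\rho>0$, work in the weighted space of smooth functions with $\int\rho\|\nabla\Phi\|^2\,dx<\infty$, modulo additive constants, equipped with the inner product $\langle\Phi,\Psi\rangle = \int\rho\langle\nabla\Phi,\nabla\Psi\rangle\,dx$; introduce the symmetric bilinear form $a(\Phi,\Psi) = \int\rho\,(\nabla\Phi)^T\nabla^2 f\,\nabla\Psi\,dx$ and the linear functional $\ell(\Psi) = -\int\rho\,\langle\nabla f,\nabla\Psi\rangle\,dx$, the latter obtained by pairing the distributional form of \eqref{equ:wnf_beta} with a test function $\Psi$ and integrating by parts. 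Since $f$ is uniformly convex (there is $\lambda>0$ with $\nabla^2 f(x)-\lambda I$ positive semidefinite for all $x$), one has $a(\Phi,\Phi)\geq\lambda\int\rho\|\nabla\Phi\|^2\,dx$, so $a$ is coercive; it is plainly bounded and symmetric, $\ell$ annihilates constants, and $\ell$ is bounded provided $\int\rho\|\nabla f\|^2\,dx<\infty$. Lax--Milgram then gives a unique $\Phi$ (modulo constants) with $a(\Phi,\Psi)=\ell(\Psi)$ for all $\Psi$, i.e. a weak solution of \eqref{equ:wnf_beta}.

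Given such $\Phi$, define $\bm{\xi} := -(\nabla^2 f)^{-1}\nabla f - \nabla\Phi$. Then $-(\nabla^2 f)^{-1}\nabla f = \nabla\Phi + \bm{\xi}$, and $\nabla\cdot(\rho\nabla^2 f\,\bm{\xi}) = -\nabla\cdot(\rho\nabla f) - \nabla\cdot(\rho\nabla^2 f\,\nabla\Phi) = 0$ by the identity above (in the weak sense). For uniqueness of the decomposition, suppose $\nabla\Phi_1 + \bm{\xi}_1 = \nabla\Phi_2 + \bm{\xi}_2$ with $\nabla\cdot(\rho\nabla^2 f\,\bm{\xi}_i)=0$ for $i=1,2$. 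Subtracting, $\nabla(\Phi_1-\Phi_2) = \bm{\xi}_2-\bm{\xi}_1$, so $\nabla\cdot(\rho\nabla^2 f\,\nabla(\Phi_1-\Phi_2)) = \nabla\cdot(\rho\nabla^2 f\,(\bm{\xi}_2-\bm{\xi}_1)) = 0$. Testing against $\Phi_1-\Phi_2$ and integrating by parts yields $\int\rho\,(\nabla(\Phi_1-\Phi_2))^T\nabla^2 f\,\nabla(\Phi_1-\Phi_2)\,dx=0$, and coercivity forces $\nabla(\Phi_1-\Phi_2)=0$. Hence $\Phi_1=\Phi_2$ up to a constant, and then $\bm{\xi}_1=\bm{\xi}_2$.

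The main obstacle is not conceptual but analytic: one must fix a weighted Sobolev space in which $a$ is coercive and $\ell$ is bounded, which needs mild hypotheses on $\rho$, $f$, $\nabla f$ (for instance $\rho>0$, $f$ uniformly convex, and $\int\rho\|\nabla f\|^2\,dx<\infty$), and one must justify the integration by parts relating the distributional form of \eqref{equ:wnf_beta} to the variational identity. On a proper subdomain $\Omega\subset\mbR^d$ the variational formulation additionally encodes the natural no-flux boundary condition $(\rho\nabla^2 f\,\bm{\xi})\cdot n=0$ on $\partial\Omega$, so boundary terms vanish and the rest of the argument is unchanged. I would state these regularity assumptions explicitly and then invoke standard elliptic theory rather than reprove Lax--Milgram here.
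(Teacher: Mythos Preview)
Your proposal is correct and follows essentially the same route as the paper: both reduce to solving \eqref{equ:wnf_beta} for $\Phi$, argue existence and uniqueness via positive definiteness of the bilinear form $\int \rho\,(\nabla\Phi)^T\nabla^2 f\,\nabla\Psi\,dx$, define $\bm{\xi}$ as the remainder, and prove uniqueness of the decomposition by the same subtraction-and-test argument. The only difference is presentational: you invoke Lax--Milgram explicitly and are careful about the weighted Sobolev setting and the uniform-convexity hypothesis needed for coercivity, whereas the paper simply asserts that a positive definite operator is invertible under the standing strict-convexity assumption on $f$.
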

\begin{proof}
We first show the existence of $\Phi\in T_\rho^* \mcP(\mbR^d)$ and $\bm{\xi}$. Note that $\Phi$ is the solution to
$$
-\nabla\cdot(\rho\nabla^2f\nabla\Phi)=\nabla\cdot(\rho\nabla f).
$$
Denote $\mcH\Phi = -\nabla\cdot(\rho\nabla^2f\nabla\Phi)$. Then, for $\Phi\neq0$, we have
$$
\int \Phi \mcH\Phi dx = \int \nabla \Phi^T\nabla^2f\nabla\Phi \rho dx>0.
$$
Hence, $\mcH$ is a positive definite operator and it is invertible. Thus $\Phi=\mcH^{-1}\pp{\nabla\cdot(\rho\nabla f)}$ exists. Because $\nabla^2 f\bm{\xi}=\nabla f-\nabla^2f\nabla\Phi$, it follows
$$
\nabla\cdot(\nabla^2 f\bm{\xi}) = \nabla\cdot(\rho\nabla f)-\nabla\cdot(\rho\nabla^2f\nabla\Phi)=0.
$$
Hence, $\bm{\xi}$ also exists. We next prove the uniqueness. Suppose that $\nabla^2f(x)^{-1}\nabla f(x)=\nabla\Phi_1(x)+\bm{\xi}_1(x)=\nabla\Phi_2(x)+\bm{\xi}_2(x)$. Then, we have $\nabla \Phi_1-\nabla\Phi_2=\bm{\xi}_2-\bm{\xi}_1$. Hence
$$
\begin{aligned}
&\int (\Phi_1-\Phi_2)\mcH(\Phi_1-\Phi_2) dx = \int  (\nabla\Phi_1-\nabla\Phi_2)^T\nabla^2f(\nabla\Phi_1-\nabla\Phi_2) \rho dx\\
=&\int (\nabla\Phi_1-\nabla\Phi_2)^T\nabla^2f(\bm{\xi}_2-\bm{\xi}_1)\rho dx=-\int (\Phi_1-\Phi_2) \nabla\cdot(\rho\nabla^2 f(\bm{\xi}_2-\bm{\xi}_1) )dx=0.
\end{aligned}
$$
Because $\mcH$ is positive definite, this yields that $\Phi_1-\Phi_2=0$ (up to a spatial constant).
\end{proof}

\subsection{Newton's Langevin dynamics in one dimensional sample space}\label{NLD1D}
In this subsection, we provide examples of Newton's Langevin dynamics in one dimensional sample space. In particular, similar to the Ornstein–Uhlenbeck (OU) process in classical Langevin dynamics, we derive a closed form solution to Newton's OU process.

Here we assume that $\Omega=\mbR$ and $f$ is strictly convex. The essence of Newton's Langevin dynamics is to compute $\Phi_t^{\operatorname{Newton}}$ from the Wasserstein Newton's direction equation \eqref{equ:wnf_kl}. Proposition \ref{prop:unique} ensures the uniqueness of the solution to \eqref{equ:wnf_kl}. For the simplicity of notations, we neglect the subscript $t$. 

\begin{proposition}\label{prop:new_1d}
Suppose that $\rho>0$ and let $u=\nabla \Phi$. Then, the Newton's direction equation \eqref{equ:wnf_kl} reduces to an ODE
\begin{equation}\label{equ:ode1}
    u''+u'(\log\rho)'-f''u-f'-(\log \rho)'=0.
\end{equation}
\end{proposition}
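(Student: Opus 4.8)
The plan is to substitute the one-dimensional forms of the differential operators into \eqref{equ:wnf_kl}, recognize the resulting ODE in $u=\Phi'$ as an exact derivative, integrate once, and verify that the integration constant vanishes. Since $\Omega=\mbR$, every gradient is $d/dx$ and every Hessian is a second derivative; writing $'=d/dx$ and $u=\Phi'$, I would compute term by term that $\nabla^2:(\rho\nabla^2\Phi)=(\rho u')''$, that $\nabla\cdot(\rho\nabla^2 f\nabla\Phi)=(\rho f''u)'$, that $\nabla\cdot(\rho\nabla f)=(\rho f')'$, and that $\Delta\rho=\rho''$. Hence \eqref{equ:wnf_kl} becomes
\[
(\rho u')''-(\rho f''u)'-(\rho f')'-\rho''=0.
\]

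The key point is that the left-hand side is the $x$-derivative of $(\rho u')'-\rho f''u-\rho f'-\rho'$, because $(\rho f')'+\rho''=(\rho f'+\rho')'$. Integrating once yields $(\rho u')'-\rho f''u-\rho f'-\rho'=C$ for some constant $C$. To show $C=0$ I would use that the Newton's direction is the unique element of $T^*_\rho\mcP(\mbR)$ solving \eqref{equ:wnf_kl} (Proposition \ref{prop:unique}): under the natural decay/integrability hypotheses on $\rho$ and $f$, so that $\rho u'$ and $\rho$ vanish at $\pm\infty$ and $\rho f''u+\rho f'$ is integrable, integrating the first integral over $\mbR$ makes its left-hand side finite, while the right-hand side is $\int_{\mbR}C\,dx$, which is finite only when $C=0$.

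With $C=0$ it remains to divide by $\rho>0$: expanding $(\rho u')'=\rho u''+\rho'u'$ and using $\rho'/\rho=(\log\rho)'$ turns $(\rho u')'-\rho f''u-\rho f'-\rho'=0$ into $u''+(\log\rho)'u'-f''u-f'-(\log\rho)'=0$, which is exactly \eqref{equ:ode1}. The translation of each operator into one dimension is routine bookkeeping; the only step needing care is the vanishing of the integration constant, which I would settle by stating the decay hypotheses explicitly (or by invoking that $\Phi$ must lie in $T^*_\rho\mcP(\mbR)$, which singles out the Newton's direction).
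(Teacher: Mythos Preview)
Your proposal is correct and follows essentially the same route as the paper: rewrite \eqref{equ:wnf_kl} in one dimension, integrate once to pick up a constant $C$, argue $C=0$ from decay at infinity, then divide by $\rho$ and use $\rho'/\rho=(\log\rho)'$. The only minor difference is in justifying $C=0$: the paper takes the pointwise limit $|x|\to\infty$ in the integrated equation (each term carries a factor of $\rho$ or $\rho'$, and $\rho\in L^1(\mbR)$ is taken to force $\rho(x)\to 0$), whereas you integrate the first integral over $\mbR$ and argue finiteness; both arguments rest on the same implicit decay hypotheses and are equally informal at the level of the paper.
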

\begin{proof}
In 1-dimensional case, the equation \eqref{equ:wnf_kl} follows
$$
\nabla^2(\rho\nabla^2\Phi)-\nabla(\rho\nabla^2f\nabla \Phi)-\nabla(\rho\nabla f)-\nabla^2\rho=0.
$$
The above equation is equivalent to
$$
\rho\nabla^3\Phi+\nabla\rho\nabla^2\Phi-\rho\nabla^2f\nabla \Phi-\rho\nabla f-\nabla\rho+C=0,
$$
where $C$ is a constant. Because $\rho\in \mcP(\mbR)\subset L^1(\mbR)$. Hence $\lim_{|x|\to\infty}\rho(x)=0$, which indicates $C=0$. Suppose that $\rho>0$ and let $u=\nabla \Phi$. Dividing both sides by $\rho$, we obtain
$$
u''+u'\rho'/\rho-f''u-f'-\rho'/\rho=0.
$$
By the fact that $\rho'/\rho=(\log\rho)'$, we derive \eqref{equ:ode1}.
\end{proof}

We consider the case where $f'(x)$ and $(\log \rho)'(x)$ are affine functions. Then, ODE \eqref{equ:ode1} has a closed-form solution.  Applying ODE \eqref{equ:ode1}, we obtain the exact formulation of Newton's Langevin dynamics in Proposition \ref{prop:gauss1d}. For the rest of this section, we present the proof of Proposition \ref{prop:gauss1d}.

\begin{proof}
In section \ref{section3} Proposition \ref{prop:exist}, we show that if the evolution of $X_t$ follows NLD, then $X_t$ follows the Gaussian distribution. We first solve the Newton's direction from ODE \eqref{equ:ode1}. Suppose that $(\log\rho)'(x) = \Sigma^{-1}(x-\mu)$. The ODE turns to be
$$
u''-u'\Sigma^{-1}(x-\mu)-(\Sigma^*)^{-1}u-(\Sigma^*)^{-1}(x-\mu^*)+\Sigma^{-1}(x-\mu)=0.
$$
We can examine that the following $u$ is a solution to the above ODE.
$$u(x)=\frac{\Sigma^{-1}-(\Sigma^*)^{-1}}{\Sigma^{-1}+(\Sigma^*)^{-1}}x-\frac{2\Sigma^{-1}}{\Sigma^{-1}+(\Sigma^*)^{-1}}\mu+\mu^*.$$
Hence, we have $\Phi^{\operatorname{Newton}}(x)=\frac{\Sigma^*-\Sigma}{2(\Sigma^*+\Sigma)}x^2-\frac{2\Sigma^*}{\Sigma^*+\Sigma}\mu x+\mu^*x.$
As a result, NLD follows
$$
dX_t = \pp{\frac{\Sigma^*-\Sigma_t}{\Sigma^*+\Sigma_t}X_t-\frac{2\Sigma^*}{\Sigma^*+\Sigma_t}\mu_t+\mu^*}dt.
$$
The dynamics of $\mu_t$ satisfies $$
d\mu_t = d\mbE[X_t]=\mbE[dX_t] = \pp{\frac{\Sigma^*-\Sigma_t}{\Sigma^*+\Sigma_t}\mu_t-\frac{2\Sigma^*}{\Sigma^*+\Sigma_t}\mu_t+\mu^*}dt=(-\mu_t+\mu^*)dt.
$$ 
This indicates that  $\mu_t=\mu^*+e^{-t}(\mu_0-\mu^*)$. The dynamics of $\Sigma_t$ follows
$$
\begin{aligned}
&d\Sigma_t = d(\mbE[X_t^2]- \mu_t^2)=2\mbE[X_tdX_t]-2\mu_td\mu_t\\
=&2\bb{\frac{\Sigma^*-\Sigma_t}{\Sigma^*+\Sigma_t}\pp{\Sigma_t+\mu_t^2}-\frac{2\Sigma^*}{\Sigma^*+\Sigma_t}\mu_t^2+\mu^*\mu_t-\mu_t(-\mu_t+\mu^*)}dt=2\frac{\Sigma^*-\Sigma_t}{\Sigma^*+\Sigma_t}\Sigma_tdt.
\end{aligned}
$$
We can rewrite that
$$
dt = \frac{(\Sigma^*+\Sigma_t)d\Sigma_t}{2(\Sigma^*-\Sigma_t)\Sigma_t}=\pp{\frac{1}{(\Sigma^*-\Sigma_t)}+\frac{1}{2\Sigma_t}}d\Sigma_t.
$$
Integrating both sides of the above equation yields
$$
t-\log |\Sigma^*-\Sigma_0|+\frac{1}{2}\log\Sigma_0=-\log |\Sigma^*-\Sigma_t|+\frac{1}{2}\log\Sigma_t,\quad (\Sigma_t-\Sigma^*)^2 = \frac{(\Sigma_0-\Sigma^*)^2}{\Sigma_0}e^{-2t}\Sigma_t.
$$
Hence, the solution $\Sigma_t$ follows
$$
\Sigma_t=\Sigma^*+\frac{e^{-2t}(\Sigma_0-\Sigma^*)^2}{2\Sigma_0}+(\Sigma_0-\Sigma^*)e^{-t}\sqrt{\frac{e^{-2t}(\Sigma_0-\Sigma^*)^2}{4\Sigma_0^2}+\frac{\Sigma^*}{\Sigma_0}}.
$$
\end{proof}

Now, we are ready to compare the NLD with OLD, LLD and HAMCMC. Here we consider $f(x)=(2\Sigma^*)^{-1}(x-\mu^*)^2$, where $\Sigma^*>0$ and $\mu^*$ are given. The OLD satisfies
$$
dX_t = -(\Sigma^*)^{-1}(X_t-\mu^*)dt+\sqrt{2}dB_t,
$$
which is also known as the Ornstein-Uhlenbeck process. 
And LLD writes
$$
dX_t =  -(\Sigma^*)^{-1}(X_t-\mu^*)dt+\Sigma_t^{-1}(X_t-\mu_t)dt.
$$
The mean $\mu_t$ and variance $\Sigma_t$ of OLD and LLD both satisfy
$$
\mu_t =\mu^*+e^{-(\Sigma^*)^{-1}t}(\mu_0-\mu^*),\quad \Sigma_t=\Sigma^*+e^{-2(\Sigma^*)^{-1}t}(\Sigma_0-\Sigma^*).$$
On the other hand, HAMCMC follows the dynamics
$$
dX_t = -(X_t-\mu^*)dt+\sqrt{2\Sigma^*} dB_t.
$$
For HAMCMC, the evolution of mean $\mu_t$ follows
$$
d\mu_t = d\mbE[X_t] = -(\mu_t-\mu^*)dt,
$$
and the evolution of variance $\Sigma_t$ satisfies
$$
\begin{aligned}
&d\Sigma_t = d(\mbE[X_t^2]- \mu_t^2)=2\mbE[X_tdX_t]-2\mu_td\mu_t\\
=&2\bb{-\pp{\Sigma_t+\mu_t^2}+\mu^*\mu_t+\Sigma^*+\mu_t(\mu_t-\mu^*)}dt=2(\Sigma^*-\Sigma_t)dt.
\end{aligned}
$$
The mean $\mu_t$ and variance $\Sigma_t$ of HAMCMC follows
$$
\mu_t = \mu^*+e^{-t}(\mu_0-\mu^*),\quad \Sigma_t = \Sigma^*+e^{-2t}(\Sigma_0-\Sigma^*).
$$

We summarize our results in Table \ref{tab:my_label}. 
\begin{table}[ht]
    \centering
    \small{\setlength{\tabcolsep}{1mm}{
    \begin{tabular}{|c|c|c|c|}
    \hline
         Dynamics&Particle&Mean and variance  \\\hline
         \multirow{3}*{NLD}&\multirow{3}*{$dX_t = \pp{\frac{\Sigma^*-\Sigma_t}{\Sigma^*+\Sigma_t}X_t-\frac{2\Sigma^*}{\Sigma^*+\Sigma_t}\mu_t+\mu^*}dt$}&$\mu_t = \mu^*+e^{-t}(\mu_0-\mu^*)$\\
         &&{$\frac{\Sigma_t-\Sigma^*}{\Sigma_0-\Sigma^*}=\frac{e^{-2t}(\Sigma_0-\Sigma^*)}{2\Sigma_0}$}\\
         &&{$+e^{-t}\sqrt{\frac{e^{-2t}(\Sigma_0-\Sigma^*)^2}{4\Sigma_0^2}+\frac{\Sigma^*}{\Sigma_0}}$}\\\hline
         OLD & $dX_t = -(\Sigma^*)^{-1}(X_t-\mu^*)dt+\sqrt{2}dB_t$&$\mu_t =\mu^*+e^{-(\Sigma^*)^{-1}t}(\mu_0-\mu^*)$\\\cline{1-2}
         LLD & $dX_t =  -(\Sigma^*)^{-1}(X_t-\mu^*)dt+\Sigma_t^{-1}(X_t-\mu_t)dt$&$\Sigma_t=\Sigma^*+e^{-2(\Sigma^*)^{-1}t}(\Sigma_0-\Sigma^*)$\\\hline
         \multirow{2}*{HAMCMC}&\multirow{2}*{$dX_t = -(X_t-\mu^*)dt+\sqrt{2\Sigma^*} dB_t.$}&$\mu_t = \mu^*+e^{-t}(\mu_0-\mu^*)$\\
         &&$\Sigma_t = \Sigma^*+e^{-2t}(\Sigma_0-\Sigma^*)$\\\hline
    \end{tabular}}
    \caption{Comparison among different Langevin dynamics on 1D Gaussian family.}\label{tab:my_label}}
\end{table}

Compared to OLD and LLD, the exponential convergence rate of $\mu_t$ and $\Sigma_t$ in NLD does not depend on $\Sigma^*$. This fact shows that the NLD is the Newton's flow for both the evolution of mean and variance in Gaussian process. We also note that the convergence rates of mean and variance are different in HAMCMC, while they are same in NLD. In section \ref{sec:num}, we use numerical examples to further demonstrate the differences between NLD and HAMCMC.


\section{Connection with Stein variational Newton's method}
The Stein variational Newton's method (SVN) is also a second-order method for sampling. It aims to minimize $J_\rho[\bphi]$, which evaluates the change of $E(\rho)$ along the transformation map $\bphi:\mbR^d\to \mbR^d$. 
\begin{equation}\label{jphi}
J_{\rho}[\bphi] = E((I+\bphi)\#\rho).
\end{equation}
Here $(I+\bphi)\#\rho$ denotes the pushforward density of $\rho$ along the map $I(x)+\bphi(x)$ and $I(x)$ is the identity map.
In each iteration, SVN solves $\bphi\in \mcS^d$ via the following equation:
\begin{equation}\label{equ:svn}
D^2 J_\rho[0](\bpsi,\bphi)=-D J_\rho[0](\bpsi),\quad \bpsi\in \mcS^d.
\end{equation}
Here $\mcS$ is the RKHS related to a kernel function $k(x,y)$ and  $\mcS^d=\mcS\times\dots\times \mcS$. Besides, $DJ_\rho$ and $D^2J_\rho$ denote the first and second variation of $J_\rho$. 

We note that the following relationships hold
$$
DJ_\rho[0][\bpsi ] = \int \bpsi^T (\nabla f +\nabla \log \rho) \rho dx. 
$$
$$
\begin{aligned}
&D^2 J_\rho[0](\bpsi,\bphi)=\mbE_{x\sim \rho }[\bphi(x)^T\nabla^2f(x) \bpsi(x)+\tr(\nabla \bphi(x)\nabla \bpsi(x))].
\end{aligned}
$$
If we restrict $\bpsi$ and $\bphi$ to be gradient vector fields. Namely, there exists $\Psi(x),\Phi(x):\mbR^d\to \mbR$ such that $\bpsi(x)=\nabla \Psi(x)$ and $\bphi(x)=\nabla \Phi(x)$. Then, we recover the gradient and Hessian operators in probability space with Wasserstein-2 metric.
$$
DJ_\rho[0][\nabla \Psi ] = \int (\lra{\nabla \psi,\nabla f}+\Delta \Psi)\rho dx = \int \Psi\grad^W E(\rho)dx. 
$$
$$ 
\begin{aligned}
D^2 J_\rho[0](\nabla \Psi,\nabla \Phi) =& \int \pp{\lra{\nabla^2 \Psi,\nabla^2\Phi} +\nabla \Psi^T \nabla^2f\nabla \Phi} \rho dx\\
=&\int \Psi \mcH_E^W (\rho) \Phi  dx.
\end{aligned}
$$
On the other hand, the kernelized Wasserstein Newton's method in each step solves $\Phi\in \mcS$ from \eqref{equ:wnf_kl}. Because $\mcS$ is a Hilbert space, this is equivalent to find $\Phi\in \mcS$ such that 
$$
\int \Psi \Hess^W E(\rho)[\Phi] dx = -\int \Psi \grad^W E(\rho)dx,\quad \forall \Psi\in \mcS,
$$
or equivalently,
$$
D^2 J_\rho[0](\nabla \Phi,\nabla \Psi) = -D J_\rho[0](\nabla \Psi),\quad \forall \Psi\in \mcS.
$$
This can be viewed as a restriction on \eqref{equ:svn}. Namely, we solve $D^2 J_\rho[0](\bpsi,\bphi)=-D J_\rho[0](\bpsi)$ in the space $\{\bphi=\nabla \Phi|\Phi\in \mcS\}$ instead of $\mcS^d$. 

\begin{remark}
We notice the differences between Wasserstein Newton and Stein variational Newton in formulations. SVN studies the second order variations w.r.t. transportation maps, while we focus on these variations w.r.t. densities.  Besides, we benefit from the utilization of gradient and Hessian operators in probability space with Wasserstein-2 metric. This allows us to to prove the convergence rate of information Newton's method in the sense of density. 
\end{remark}

\section{Proofs in Section \ref{sec:conv}}
In this section, we provide convergence proofs of information Newton's method with approximated Newton's direction in section \ref{sec:conv}. 

\subsection{Riemannian structure of probability space}
\label{ssec:riem}
We first provide some background knowledge for the Riemannian structure of probability space. For simplicity, we define the exponential map and other Riemannian operators on cotangent space.
\begin{definition}[Exponential map on cotangent space and its inverse]
The exponential map $\Exp_{\rho_0}$ is a mapping from the cotangent space $T_{\rho_0}^*\mcP(\Omega)$ to $\mcP(\Omega)$. Namely, $\Exp_{\rho_0}(\Phi)=\hat \rho_s|_{s=1}$. Here $\hat\rho_s, s\in[0,1]$ is the solution to geodesic equation \eqref{equ:geo} with initial conditions $\hat\rho_s|_{s=0} = \rho_0$, $\Phi_s|_{s=0}=\Phi$.

The inverse of the exponential map $\Exp_{\rho_0}(\rho_1)$ follows $\Exp_{\rho_0}^{-1}(\rho_1)=\mcG(\hat \rho_s)\p_s\hat \rho_s|_{s=0}$. Here $\hat\rho_s, s\in[0,1]$ is the solution to geodesic equation \eqref{equ:geo} with boundary conditions $\hat\rho_s|_{s=0} = \rho_0$ and $\hat\rho_s|_{s=1} = \rho_1$.
\end{definition}
We also denote $\Exp_{\rho}^{\alpha} (\Phi)$ to be the solution at time $t=\alpha$ to the geodesic equation \eqref{equ:geo} with initial values $\hat \rho_0 = \rho$ and $\Phi_0 = \Phi$. 
As a known result of Riemannian geometry, the geodesic curve has constant speed \citep{aitdm}. Namely, for $\Phi\in T_\rho^*\mcP(\Omega)$ and $\alpha>0$, we have
$$\Exp_\rho^\alpha(\Phi)= \Exp_\rho(\alpha\Phi).$$
And for $\rho_0,\rho_1\in\mcP(\Omega)$, it follows
$$ \|\Exp_{\rho_0}^{-1}(\rho_1)\|_{\rho_0}^2 = \mcD(\rho_0,\rho_1)^2.$$

We define high-order derivatives on the cotangent-space in Proposition \ref{prop:high}.
\begin{proposition}\label{prop:high}
For all $\Phi\in T_\rho^* \mcP(\Omega)$, it follows
$$
\begin{aligned}
E(\Exp^s_\rho(\Phi)) = &E(\rho)+s\nabla E(\rho)(\Phi)+\dots \frac{s^{n-1}}{(n-1)!}\nabla^{n-1}E(\rho)(\Phi,\dots, \Phi)\\
&+\frac{s^n}{n!} \nabla^n E(\Exp_\rho(\lambda\Phi))(\tau_\lambda \Phi,\dots,\tau_\lambda \Phi),
\end{aligned}
$$
where $\tau_\lambda$ is the parallelism from $\rho$ to $\Exp^\lambda_\rho(\Phi)$ and $\lambda\in(0,s)$. Here $\nabla^nE(\rho)$ defines a $n$-form on the cotangent space $T_\rho^* \mcP(\Omega)$. Namely, it is recursively defined by
$$
\nabla^n E(\rho)(\Phi_1,\dots,\Phi_n) = \left.\frac{\p}{\p s} \nabla^{n-1} E(\Exp_\rho( s\Phi_n))(\tau_s\Phi_1,\dots,\tau_s\Phi_{n-1})\right|_{s=0},
$$
where $\tau_s$ is the parallelism from $\rho$ to $\Exp_\rho( s\Phi_n)$.
\end{proposition}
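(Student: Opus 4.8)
The plan is to reduce the statement to the classical one–dimensional Taylor theorem with Lagrange remainder. Fix $\rho$ and $\Phi\in T_\rho^*\mcP(\Omega)$, write $\gamma(s)=\Exp_\rho^s(\Phi)$ for the constant–speed geodesic with $\gamma(0)=\rho$ and initial cotangent $\Phi$, and set $g(s)=E(\gamma(s))$. Granting that $E$ is smooth enough along geodesics that $g\in C^n$ near $[0,s]$, the scalar Taylor theorem produces some $\lambda\in(0,s)$ with
\begin{equation*}
g(s)=\sum_{k=0}^{n-1}\frac{s^k}{k!}\,g^{(k)}(0)+\frac{s^n}{n!}\,g^{(n)}(\lambda).
\end{equation*}
So the whole task is to identify $g^{(k)}(s_0)$ with the $k$–form $\nabla^kE$ at $\gamma(s_0)$ evaluated on $k$ copies of the transported velocity.

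The core claim, which I would prove by induction on $k$, is
\begin{equation*}
g^{(k)}(s_0)=\nabla^kE(\gamma(s_0))\bigl(\tau_{s_0}\Phi,\dots,\tau_{s_0}\Phi\bigr),
\end{equation*}
where $\tau_{s_0}$ is the parallelism along $\gamma$ from $\rho$ to $\gamma(s_0)$. The case $k=0$ is the convention $\nabla^0E=E$; the case $k=1$ is the definition of $\nabla E$ combined with the fact that along a geodesic the velocity is itself parallel, $\partial_s\gamma(s)=\tau_s\Phi$, which follows from the constant–speed property \citep{aitdm} and the identity $\Exp_\rho^\alpha(\Phi)=\Exp_\rho(\alpha\Phi)$ recorded above. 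For the inductive step I would differentiate the identity $g^{(k)}(s)=\nabla^kE(\gamma(s))(\tau_s\Phi,\dots,\tau_s\Phi)$ at $s=s_0$ using two standard geometric facts: first, the reparametrised curve $t\mapsto\gamma(s_0+t)$ is itself the geodesic $\Exp_{\gamma(s_0)}^t(\partial_s\gamma(s_0))$; and second, parallel transport along $\gamma$ obeys the cocycle rule, namely transport from $\rho$ to $\gamma(s_0+t)$ factors as transport from $\gamma(s_0)$ to $\gamma(s_0+t)$ composed with $\tau_{s_0}$. Consequently $\tau_{s_0+t}\Phi$ is precisely the vector $\partial_s\gamma(s_0)=\tau_{s_0}\Phi$ transported along this shorter geodesic, so that $\partial_t\big|_{t=0}$ of the right–hand side is, by the recursive definition of $\nabla^{k+1}E$ at base point $\gamma(s_0)$ with initial direction $\tau_{s_0}\Phi$, exactly $\nabla^{k+1}E(\gamma(s_0))(\tau_{s_0}\Phi,\dots,\tau_{s_0}\Phi)$.

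With the core claim in hand, setting $s_0=0$ gives $g^{(k)}(0)=\nabla^kE(\rho)(\Phi,\dots,\Phi)$ for $k\le n-1$, and $s_0=\lambda$ gives $g^{(n)}(\lambda)=\nabla^nE(\Exp_\rho(\lambda\Phi))(\tau_\lambda\Phi,\dots,\tau_\lambda\Phi)$; substituting both into the scalar Taylor formula yields the stated expansion. I expect the main obstacle to be the geometric bookkeeping in the inductive step — rigorously establishing self–parallelism of geodesics and the cocycle property of parallel transport in this infinite–dimensional density–manifold setting — together with the analytic prerequisite of justifying that $E$ is $C^n$ along geodesics so that the one–dimensional Taylor theorem may be invoked; once those are secured, the induction itself is routine.
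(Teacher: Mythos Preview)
Your proposal is correct and follows essentially the same route as the paper: reduce to a one-dimensional Taylor expansion of $s\mapsto E(\Exp_\rho^s(\Phi))$, and identify the $k$-th derivative with $\nabla^kE(\Exp_\rho^s(\Phi))(\tau_s\Phi,\dots,\tau_s\Phi)$ via the geodesic reparametrisation $\Exp_\rho^{s+t}(\Phi)=\Exp_{\Exp_\rho^s(\Phi)}^t(\tau_s\Phi)$ and the cocycle property of parallel transport. The only cosmetic difference is that the paper first records the derivative identity for distinct slots $\Phi_1,\dots,\Phi_n$ and then specialises, whereas you induct directly on the diagonal case.
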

\begin{proof}
We first show that
\begin{equation}\label{equ:tay_deri}
    \frac{\p}{\p s} \nabla^{n-1} E(\Exp_\rho^s(\Phi_n))(\tau_s\Phi_1,\dots,\tau_s\Phi_{n-1})=\nabla^n E(\Exp_\rho^s(\Phi_n))(\tau_s\Phi_1,\dots,\tau_s\Phi_n).
\end{equation}
From the definition, it follows that
$$
\begin{aligned}
&\frac{\p}{\p s} \nabla^{n-1} E(\Exp_\rho^s(\Phi_n))(\tau_s\Phi_1,\dots,\tau_s\Phi_{n-1})\\
=&\left.\frac{\p}{\p t} \nabla^{n-1} E(\Exp_\rho^{s+t}(\Phi_n))(\tau_{s+t}\Phi_1,\dots,\tau_{s+t}\Phi_{n-1})\right|_{t=0}\\
=&\left.\frac{\p}{\p t} \nabla^{n-1} E(\Exp_{\Exp_{\rho}^s(\Phi_n)}^{t}(\tau_s\Phi_n))(\tau_{t}\tau_s\Phi_1,\dots,\tau_{t}\tau_s\Phi_{n-1})\right|_{t=0}\\
=&\nabla^n E(\Exp_\rho^s(\Phi_n))(\tau_s\Phi_1,\dots,\tau_s\Phi_n).
\end{aligned}
$$
From \eqref{equ:tay_deri}, we can recursively compute that
$$
\frac{\p^n}{(\p s)^n} E(\Exp_\rho^s(\Phi)) = \nabla^nE(\Exp_\rho^s(\Phi))(\tau_s\Phi,\dots \tau_s\Phi).
$$
The Taylor expansion of $E(\Exp_\rho^s(\Phi))$ w.r.t. $s$ completes the proof.
\end{proof}

\subsection{Cauchy-Schwarz inequality}
To complete proofs in section \ref{sec:conv}, we introduce Lemma \ref{lem:cs}.
\begin{lemma}[Cauchy-Schwarz inequality] \label{lem:cs}
Suppose that $\mcH:T_\rho^*\mcP(\Omega)\to T_\rho\mcP(\Omega)$ is a self-adjoint linear operator and $\mcH$ is positive definite. Then, for $\Phi_1, \Phi_2\in T_\rho^*\mcP(\Omega)$, we have 
$$
\pp{\int \Phi_1\mcH\Phi_2 dx}^2\leq \pp{\int \Phi_1\mcH\Phi_1 dx} \pp{\int \Phi_2\mcH\Phi_2 dx}.
$$
\end{lemma}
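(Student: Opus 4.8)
The plan is to reduce this to the elementary Cauchy--Schwarz argument for a symmetric positive semi-definite bilinear form. Define $B(\Phi,\Psi) = \int \Phi\,\mcH\Psi\,dx$ for $\Phi,\Psi\in T_\rho^*\mcP(\Omega)$. First I would record that $B$ is bilinear (because $\mcH$ is linear and integration is linear) and symmetric: since $\mcH$ is self-adjoint, $\int \Phi\,\mcH\Psi\,dx = \int \Psi\,\mcH\Phi\,dx$, so $B(\Phi,\Psi) = B(\Psi,\Phi)$. Moreover, positive definiteness of $\mcH$ gives $B(\Phi,\Phi) > 0$ for every $\Phi\neq 0$, and $B(\Phi,\Phi) = 0$ only when $\Phi = 0$. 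Note also that $T_\rho^*\mcP(\Omega)$ is a linear space, so any combination $\Phi_1 - t\Phi_2$ with $t\in\mbR$ stays in it.

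Next I would split into two cases. If $B(\Phi_2,\Phi_2) = 0$, then $\Phi_2 = 0$ by positive definiteness, hence $B(\Phi_1,\Phi_2) = 0$ as well, and both sides of the claimed inequality vanish. Otherwise $B(\Phi_2,\Phi_2) > 0$. For any $t\in\mbR$, bilinearity and symmetry give
\begin{equation*}
0 \le B(\Phi_1 - t\Phi_2,\,\Phi_1 - t\Phi_2) = B(\Phi_1,\Phi_1) - 2t\,B(\Phi_1,\Phi_2) + t^2\,B(\Phi_2,\Phi_2).
\end{equation*}
This is a (nonstrict) quadratic inequality in $t$ that holds for all real $t$; since the leading coefficient $B(\Phi_2,\Phi_2)$ is positive, the discriminant must be nonpositive:
\begin{equation*}
4\,B(\Phi_1,\Phi_2)^2 - 4\,B(\Phi_1,\Phi_1)\,B(\Phi_2,\Phi_2) \le 0,
\end{equation*}
which is exactly $\left(\int \Phi_1\mcH\Phi_2\,dx\right)^2 \le \left(\int \Phi_1\mcH\Phi_1\,dx\right)\left(\int \Phi_2\mcH\Phi_2\,dx\right)$. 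Alternatively, one can substitute the minimizing value $t = B(\Phi_1,\Phi_2)/B(\Phi_2,\Phi_2)$ directly and rearrange.

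There is essentially no genuine obstacle here; this is a routine argument. The only points requiring a word of care are that $\mcH$ being \emph{self-adjoint} is what makes $B$ symmetric (so that the cross term expands cleanly), and that $\Phi_1 - t\Phi_2$ remains an admissible test function in $T_\rho^*\mcP(\Omega)$, which holds because that space is linear. Positive definiteness is used twice: once to handle the degenerate case $B(\Phi_2,\Phi_2)=0$, and once to guarantee the quadratic opens upward so the discriminant criterion applies.
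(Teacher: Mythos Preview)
Your proposal is correct and follows essentially the same route as the paper: expand the nonnegative quadratic $\int(\Phi_1+s\Phi_2)\mcH(\Phi_1+s\Phi_2)\,dx$ in the real parameter and invoke the discriminant condition. Your write-up is in fact a bit more careful than the paper's, since you explicitly use self-adjointness to justify symmetry of the bilinear form and separately dispose of the degenerate case $B(\Phi_2,\Phi_2)=0$.
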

\begin{proof}
The proof is quite similar to the Euclidean space. For all $s\in \mbR$, we have
$$
\begin{aligned}
0&\leq\int (\Phi_1+s\Phi_2)\mcH(\Phi_1+s\Phi_2) dx\\
=&s^2 \int \Phi_2\mcH\Phi_2 dx+2s\int \Phi_1\mcH\Phi_2 dx+\int \Phi_1\mcH\Phi_1 dx.
\end{aligned}
$$
Because the arbitrary choice of $s$, it follows that
$$
\pp{2\int \Phi_1\mcH_E(\rho)\Phi_2 dx}^2-4\pp{\int \Phi_1\mcH_E(\rho)\Phi_1 dx} \pp{\int \Phi_2\mcH_E(\rho)\Phi_2 dx}\geq 0.
$$
This completes the proof. 
\end{proof}

\subsection{Proofs of Proposition \ref{prop:est0} and Lemma \ref{lem:est}}
To prove Proposition \ref{prop:est0}, we introduce Lemma \ref{lem:tay2}.
\begin{lemma}\label{lem:tay2}
For all $\Phi\in T_{\rho_k}^*\mcP(\Omega)$, it follows
$$
\nabla E(\rho_k)(\Phi)+\nabla^2 E(\rho_k)(T_k,\Phi) = -\frac{1}{2}\nabla^3E(\Exp^{\lambda}_{\rho_k})(\tau_\lambda T_k,\tau_\lambda T_k,\tau_\lambda\Phi),
$$
where $\tau_\lambda$ is the parallelism from $\rho_k$ to $\Exp_{\rho_k}^\lambda(T_k)$ and $\lambda\in(0,1)$.
\end{lemma}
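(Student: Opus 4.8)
The plan is to read the claimed identity as a second‑order Taylor expansion with Lagrange remainder of $E$ along the geodesic from $\rho_k$ to $\rho^*$. Since $T_k=\Exp_{\rho_k}^{-1}(\rho^*)$, the curve $s\mapsto \Exp_{\rho_k}^s(T_k)$, $s\in[0,1]$, is the constant‑speed geodesic joining $\rho_k$ at $s=0$ to $\rho^*$ at $s=1$, with initial cotangent velocity $T_k$. Fix $\Phi\in T_{\rho_k}^*\mcP(\Omega)$, write $\tau_s$ for the parallelism from $\rho_k$ to $\Exp_{\rho_k}^s(T_k)$, and set
$$
g(s)=\nabla E\big(\Exp_{\rho_k}^s(T_k)\big)(\tau_s\Phi),\qquad s\in[0,1],
$$
i.e. the first variation of $E$ at the point $\Exp_{\rho_k}^s(T_k)$ paired with the parallel‑transported covector $\tau_s\Phi$. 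In particular $g(0)=\nabla E(\rho_k)(\Phi)$.

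Next I would differentiate $g$ twice using the identity \eqref{equ:tay_deri} established inside Proposition \ref{prop:high}, always taking the geodesic direction to be $T_k$. Applying it with $n=2$ gives $g'(s)=\nabla^2 E\big(\Exp_{\rho_k}^s(T_k)\big)(\tau_s\Phi,\tau_s T_k)$, and applying it once more (with $n=3$) gives $g''(s)=\nabla^3 E\big(\Exp_{\rho_k}^s(T_k)\big)(\tau_s\Phi,\tau_s T_k,\tau_s T_k)$; smoothness of $E$ makes $g$ of class $C^2$ on $[0,1]$, so these derivatives are well defined and continuous. Taylor's theorem with the Lagrange form of the remainder then produces $\lambda\in(0,1)$ with $g(1)=g(0)+g'(0)+\frac{1}{2}g''(\lambda)$.

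It remains to evaluate the three terms. Because $\rho^*$ minimizes $E$, its first variation vanishes, i.e. $\grad E(\rho^*)=0$, whence $g(1)=\nabla E(\rho^*)(\tau_1\Phi)=0$. Also $g(0)=\nabla E(\rho_k)(\Phi)$, and using that the Hessian $\nabla^2 E(\rho_k)$ is a symmetric bilinear form on $T_{\rho_k}^*\mcP(\Omega)$ we get $g'(0)=\nabla^2 E(\rho_k)(\Phi,T_k)=\nabla^2 E(\rho_k)(T_k,\Phi)$. Substituting into the Taylor identity and rearranging yields
$$
\nabla E(\rho_k)(\Phi)+\nabla^2 E(\rho_k)(T_k,\Phi)=-\frac{1}{2}\nabla^3 E\big(\Exp_{\rho_k}^\lambda(T_k)\big)(\tau_\lambda T_k,\tau_\lambda T_k,\tau_\lambda\Phi),
$$
where I have grouped the two repeated $T_k$ slots as in the statement; the placement of the single $\Phi$ slot is immaterial, both because the tensor is symmetric in its first two arguments and because this identity is used downstream only through the symmetric bound \eqref{ass:A3}. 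The step demanding the most care is the derivative bookkeeping in the second paragraph: one must keep the geodesic fixed (generated by $T_k$) through both differentiations and track which slot of $\nabla^n E$ carries $\tau_s\Phi$ versus $\tau_s T_k$, so that the repeated application of \eqref{equ:tay_deri} is legitimate; the evaluation at the endpoints and the symmetry of the Hessian are then routine.
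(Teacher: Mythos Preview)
Your argument is correct and follows essentially the same route as the paper: define the auxiliary function $g(s)=\nabla E(\Exp_{\rho_k}^s(T_k))(\tau_s\Phi)$, differentiate twice along the geodesic via the recursive definition of $\nabla^n E$ (equivalently \eqref{equ:tay_deri}), apply Taylor's theorem with Lagrange remainder, and use that $\rho^*$ is a minimizer so $g(1)=0$. Your explicit invocation of the Hessian's symmetry to reorder the slots of $\nabla^2 E(\rho_k)$ and your remark on the placement of $\tau_\lambda\Phi$ in the third‑order term are minor bookkeeping refinements; the paper silently uses the same reorderings.
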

\begin{proof}
Consider an auxiliary function 
$$
A(s) = \nabla E(\Exp_{\rho_k}^s(T_k))(\tau_s\Phi).
$$
Directly from the definition of high-order derivatives, it follows
$$
\frac{\p}{\p s}A(s)=\nabla^2 E(\Exp_{\rho_k}^s(T_k))(\tau_sT_k,\tau_s\Phi),
$$
$$
\frac{\p^2}{\p s^2}A(s)=\nabla^3 E(\Exp_{\rho_k}^s(T_k))(\tau_sT_k,\tau_sT_k,\tau_s\Phi).
$$
Hence, we can compute the Taylor expansion
$$
\nabla E(\Exp_{\rho_k}^1(T_k))(\tau_1\Phi)= \nabla E(\rho_k)(\Phi)+\nabla^2 E(\rho_k)(T_k,\Phi)+\frac{1}{2}\nabla^3E(\Exp^{\lambda}_{\rho_k})(\tau_\lambda T_k,\tau_\lambda T_k,\tau_\lambda\Phi).
$$
On the other hand, we notice that
$$
\nabla E(\Exp_{\rho_k}^1(T_k))(\tau_1\Phi)=\nabla E(\rho^*)(\tau_1\Phi) = \int \tau_1\Phi \mcG(\rho)^{-1}\frac{\delta E}{\delta \rho^*}dx=0.
$$
This completes the proof.
\end{proof}

Based on Lemma \ref{lem:tay2}, 
Note that $\Phi_k=-\mcH_E(\rho_k)^{-1}\mcG(\rho_k)^{-1}\frac{\delta E}{\delta \rho_k}$.
Hence, it follows
$$
\mcH_E(\rho_k) \tau^{-1}T_{k+1} = \mcH_E(\rho_k)T_k+\mcG(\rho_k)^{-1}\frac{\delta E}{\delta \rho_k}-\mcH_E(\rho_k)R_k.
$$
For arbitrary $\Psi\in T_{\rho_k}^*\mcP(\Omega)$, we have
\begin{equation}\label{equ:tay1}
    \begin{aligned}
&\nabla^2E(\rho_k)(\Psi,\tau^{-1}T_{k+1})\\
=&\int \Psi\mcH_E(\rho_k) \tau^{-1}T_{k+1}dx\\
=&\int \Psi(\mcH_E(\rho_k)T_k+\mcG(\rho_k)^{-1}\frac{\delta E}{\delta \rho_k}-\mcH_E(\rho_k)R_k)dx\\
=&\nabla^2E(\rho_k)(\Psi,T_k)+\nabla E(\rho_k)(\Psi)-\nabla^2E(\rho_k)(\Psi,R_k)\\
=&-\frac{1}{2}\nabla^3E(\Exp^{\lambda}_{\rho_k})(\tau_\lambda\Psi,\tau_\lambda T_k,\tau_\lambda T_k)-\nabla^2E(\rho_k)(\Psi,R_k).
\end{aligned}
\end{equation}
Here the last equality comes from Lemma \ref{lem:tay2}. Based on the definition of parallelism, we notice the fact $$\|\tau_\lambda\Psi\|_{\Exp_{\rho_k}^\lambda(\Phi_k)}=\|\Psi\|_{\rho_k},\quad \forall \Psi\in T_{\rho_k}^*\mcP(\Omega).$$ 
Taking $\Psi=\tau^{-1}T_{k+1}$ in \eqref{equ:tay1}, applying Assumption \ref{asmp:1} and utilizing Lemma \ref{lem:cs} yields
$$
\begin{aligned}
&\delta_1 \|\tau^{-1}T_{k+1}\|_{\rho_{k}}^2\leq \left|\nabla^2E(\rho_k)(\tau^{-1}T_{k+1},\tau^{-1}T_{k+1})\right|\\
\leq&\frac{1}{2}\left|\nabla^3E(\Exp^{\lambda}_{\rho_k})(\tau_\lambda\tau^{-1}T_{k+1},\tau_\lambda T_k,\tau_\lambda T_k)\right|+\left|\nabla^2E(\rho_k)(\tau^{-1}T_{k+1},R_k)\right|\\
\leq&\frac{1}{2}\left|\nabla^3E(\Exp^{\lambda}_{\rho_k})(\tau_\lambda\tau^{-1}T_{k+1},\tau_\lambda T_k,\tau_\lambda T_k)\right|\\
&+\sqrt{\left|\nabla^2E(\rho_k)(R_k,R_k)\right|\left|\nabla^2E(\rho_k)(\tau_\lambda\tau^{-1}T_{k+1},\tau_\lambda\tau^{-1}T_{k+1})\right|}\\
\leq & \delta_3\|\tau_\lambda T_k\|_{\Exp_{\rho_k}^\lambda(\Phi_k)}^2\|\tau_\lambda\tau^{-1}T_{k+1}\|_{\Exp_{\rho_k}^\lambda(\Phi_k)}+\delta_2\|\tau^{-1}T_{k+1}\|_{\rho_k}\|R_k\|_{\rho_k}\\
=& \delta_3\|T_k\|_{\rho_k}^2\|\tau^{-1}T_{k+1}\|_{\rho_k}+\delta_2\|\tau^{-1}T_{k+1}\|_{\rho_k}\|R_k\|_{\rho_k}.
\end{aligned}
$$
Hence, it follows
$$
\|T_{k+1}\|_{\rho_{k+1}}=\|\tau^{-1}T_{k+1}\|_{\rho_{k}}\leq \frac{\delta_3}{\delta_1}\|T_k\|_{\rho_k}^2+\frac{\delta_2}{\delta_1}\|R_k\|_{\rho_k}.
$$
To prove Lemma \ref{lem:est}, we introduce the following Lemma \ref{lem:est1}.
\begin{lemma}\label{lem:est1}
We have following estimations
$$
\|\Phi_k\|_{\rho_k} = O(\|T_k\|_{\rho_k}),\quad \|T_{k+1}\|_{\rho_{k+1}}=O(\|T_k\|_{\rho_k}).
$$
\end{lemma}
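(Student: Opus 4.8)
The plan is to establish the two bounds in sequence, since the second depends on the first. For the first estimate, I start from the defining equation $\mcH_E(\rho_k)\Phi_k = -\mcG(\rho_k)^{-1}\frac{\delta E}{\delta \rho_k}$, so that $\Phi_k = -\mcH_E(\rho_k)^{-1}\mcG(\rho_k)^{-1}\frac{\delta E}{\delta \rho_k}$. Pairing this with $\Phi_k$ itself and using \eqref{ass:A1} gives
\[
\delta_1\|\Phi_k\|_{\rho_k}^2 \leq \nabla^2 E(\rho_k)(\Phi_k,\Phi_k) = \int \Phi_k \mcH_E(\rho_k)\Phi_k\,dx = -\int \Phi_k\,\mcG(\rho_k)^{-1}\frac{\delta E}{\delta \rho_k}\,dx = -\nabla E(\rho_k)(\Phi_k).
\]
Now I invoke Lemma \ref{lem:tay2} with the test function $\Phi = \Phi_k$: it states $\nabla E(\rho_k)(\Phi_k) + \nabla^2 E(\rho_k)(T_k,\Phi_k) = -\tfrac12 \nabla^3 E(\Exp^\lambda_{\rho_k})(\tau_\lambda T_k,\tau_\lambda T_k,\tau_\lambda \Phi_k)$ for some $\lambda\in(0,1)$. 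Substituting and applying \eqref{ass:A2} and \eqref{ass:A3} (together with the isometry property of parallelism, $\|\tau_\lambda\Psi\|_{\Exp^\lambda_{\rho_k}(T_k)}=\|\Psi\|_{\rho_k}$) yields
\[
\delta_1\|\Phi_k\|_{\rho_k}^2 \leq |\nabla^2 E(\rho_k)(T_k,\Phi_k)| + \tfrac12|\nabla^3 E(\cdots)| \leq \delta_2 \|T_k\|_{\rho_k}\|\Phi_k\|_{\rho_k} + \tfrac{\delta_3}{2}\|T_k\|_{\rho_k}^2\|\Phi_k\|_{\rho_k}.
\]
Dividing by $\|\Phi_k\|_{\rho_k}$ gives $\|\Phi_k\|_{\rho_k}\leq \frac{\delta_2}{\delta_1}\|T_k\|_{\rho_k} + \frac{\delta_3}{2\delta_1}\|T_k\|_{\rho_k}^2 = O(\|T_k\|_{\rho_k})$, where the last equality uses that $\|T_k\|_{\rho_k} = \mcD(\rho_k,\rho^*)<\zeta$ is bounded.

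For the second estimate, I use the decomposition from Proposition \ref{prop:est0}: there is a unique $R_k$ with $T_k = \tau^{-1}T_{k+1} + \Phi_k + R_k$, and Proposition \ref{prop:est0} gives $\|T_{k+1}\|_{\rho_{k+1}} = \|\tau^{-1}T_{k+1}\|_{\rho_k} \leq \frac{\delta_3}{\delta_1}\|T_k\|_{\rho_k}^2 + \frac{\delta_2}{\delta_1}\|R_k\|_{\rho_k}$. So it remains only to bound $\|R_k\|_{\rho_k}$. From the decomposition, $R_k = T_k - \tau^{-1}T_{k+1} - \Phi_k$, so by the triangle inequality $\|R_k\|_{\rho_k} \leq \|T_k\|_{\rho_k} + \|T_{k+1}\|_{\rho_{k+1}} + \|\Phi_k\|_{\rho_k}$. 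I would like to turn this into a self-contained bound; combining it with the Proposition \ref{prop:est0} inequality and the first estimate $\|\Phi_k\|_{\rho_k}=O(\|T_k\|_{\rho_k})$, I get
\[
\|T_{k+1}\|_{\rho_{k+1}} \leq \frac{\delta_3}{\delta_1}\|T_k\|_{\rho_k}^2 + \frac{\delta_2}{\delta_1}\Big(\|T_k\|_{\rho_k} + \|T_{k+1}\|_{\rho_{k+1}} + O(\|T_k\|_{\rho_k})\Big),
\]
which is circular unless the coefficient of $\|T_{k+1}\|_{\rho_{k+1}}$ on the right is $<1$.

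The main obstacle is precisely this circularity: the crude triangle-inequality bound on $\|R_k\|_{\rho_k}$ reintroduces $\|T_{k+1}\|_{\rho_{k+1}}$. To avoid it, I would instead bound $\|R_k\|_{\rho_k}$ directly via Lemma \ref{lem:est} — but note the excerpt presents Lemma \ref{lem:est1} as a tool \emph{used in the proof of} Lemma \ref{lem:est}, so I should not assume it. The cleaner route is: bound $\|R_k\|_{\rho_k}$ using only $\|T_k\|_{\rho_k}$ and $\|\Phi_k\|_{\rho_k}$ by observing $T_k - \Phi_k = \tau^{-1}T_{k+1} + R_k$, and then note that $\tau^{-1}T_{k+1} = \Exp^{-1}_{\rho_k}(\text{point near }\rho^*)$ already has norm controlled by an independent Taylor-type argument (as in the proof of Proposition \ref{prop:est0}, equation \eqref{equ:tay1} with $\Psi = \tau^{-1}T_{k+1}$). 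That estimate gives $\|\tau^{-1}T_{k+1}\|_{\rho_k} \leq \frac{\delta_3}{\delta_1}\|T_k\|_{\rho_k}^2 + \frac{\delta_2}{\delta_1}\|R_k\|_{\rho_k}$ \emph{and} $R_k$ itself can be controlled by expanding $\nabla E$ to second order: from \eqref{equ:tay1} with a general test function $\Psi$, $|\nabla^2 E(\rho_k)(\Psi,R_k)| \leq \frac{\delta_3}{2}\|T_k\|_{\rho_k}^2\|\Psi\|_{\rho_k} + |\nabla^2 E(\rho_k)(\Psi,\tau^{-1}T_{k+1})|$, and choosing $\Psi=R_k$ plus the already-known first estimate closes the loop with a bound $O(\|T_k\|_{\rho_k})$ (indeed $O(\|T_k\|_{\rho_k}^2)$ once everything is assembled, which is what Lemma \ref{lem:est} ultimately wants). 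Plugging $\|R_k\|_{\rho_k}=O(\|T_k\|_{\rho_k})$ back into the Proposition \ref{prop:est0} inequality then gives $\|T_{k+1}\|_{\rho_{k+1}}=O(\|T_k\|_{\rho_k})$, completing the proof.
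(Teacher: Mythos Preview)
Your argument for the first estimate $\|\Phi_k\|_{\rho_k}=O(\|T_k\|_{\rho_k})$ is correct and matches the paper's in spirit: you use \eqref{ass:A1} to lower-bound the Hessian quadratic form, the defining relation $\mcH_E(\rho_k)\Phi_k=-\mcG(\rho_k)^{-1}\frac{\delta E}{\delta\rho_k}$, and then Lemma~\ref{lem:tay2} together with \eqref{ass:A2}--\eqref{ass:A3}. The paper routes this through the intermediate quantity $\|\frac{\delta E}{\delta\rho_k}\|_{\rho_k}$ (applying Lemma~\ref{lem:tay2} with $\Phi=\frac{\delta E}{\delta\rho_k}$ and Cauchy--Schwarz), whereas you plug $\Phi=\Phi_k$ directly; both are fine.

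For the second estimate your approach is substantially overcomplicated and, as you yourself note, becomes circular. The decomposition $T_k=\tau^{-1}T_{k+1}+\Phi_k+R_k$ and Proposition~\ref{prop:est0} are not needed here, and your proposed ``cleaner route'' via \eqref{equ:tay1} still leaves $\|R_k\|_{\rho_k}$ and $\|\tau^{-1}T_{k+1}\|_{\rho_k}$ mutually entangled without a clean closure. The paper's proof of the second estimate is a one-liner you are missing: recall that $\|T_k\|_{\rho_k}=\mcD(\rho_k,\rho^*)$, $\|T_{k+1}\|_{\rho_{k+1}}=\mcD(\rho_{k+1},\rho^*)$, and $\|\Phi_k\|_{\rho_k}=\mcD(\rho_k,\rho_{k+1})$ (constant-speed geodesics). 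The ordinary triangle inequality for the distance $\mcD$ then gives
\[
\bigl|\|T_k\|_{\rho_k}-\|\Phi_k\|_{\rho_k}\bigr|\le \|T_{k+1}\|_{\rho_{k+1}}\le \|T_k\|_{\rho_k}+\|\Phi_k\|_{\rho_k},
\]
and combining with the first estimate yields $\|T_{k+1}\|_{\rho_{k+1}}=O(\|T_k\|_{\rho_k})$ immediately.
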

\begin{proof}
From Assumption \ref{asmp:1} and Cauchy-Swarz inequality, it follows that
$$
\begin{aligned}
\|\Phi_k\|_{\rho_k}^2=&\int \Phi_k \mcG(\rho_k)^{-1} \Phi_k dx\leq \delta_1^{-1} \int \Phi_k \mcH_E(\rho_k)\Phi_k dx\\
=&\delta_1^{-1} \int \Phi_k \mcG(\rho_k)^{-1} \frac{\delta E}{\delta \rho_k} dx\leq \delta_1^{-1} \|\Phi_k\|_{\rho_k}\norm{\frac{\delta E}{\delta \rho_k}}_{\rho_k}.
\end{aligned}
$$
We also notice that from Lemma \ref{lem:tay2},
$$
\begin{aligned}
&\norm{\frac{\delta E}{\delta \rho_k}}_{\rho_k}^2= \nabla E(\rho_k)\pp{\frac{\delta E}{\delta \rho_k}}\\
=&\nabla^2 E(\rho_k)\pp{T_k,\frac{\delta E}{\delta \rho_k}}+O\pp{\|T_k\|_{\rho_k}^2\norm{\frac{\delta E}{\delta \rho_k}}_{\rho_k}}\\
=&O\pp{\|T_k\|_{\rho_k}\norm{\frac{\delta E}{\delta \rho_k}}_{\rho_k}}.
\end{aligned}
$$
As a result, we have $\|\Phi_k\|_{\rho_k}=O\pp{\norm{\frac{\delta E}{\delta \rho_k}}_{\rho_k}}=O\pp{\|T_k\|_{\rho_k}}$. We also note the triangle inequality
$$
|\|T_k\|_{\rho_k}-\|\Phi_k\|_{\rho_k}|\leq\|T_{k+1}\|_{\rho_{k+1}}\leq \|T_k\|_{\rho_k}+\|\Phi_k\|_{\rho_k}.
$$
This yields $\|T_{k+1}\|_{\rho_{k+1}}=O(\|T_k\|_{\rho_k})$.
\end{proof}
We finally show the estimation of $\|R_k\|_{\rho_k}$.  Based on the first-order approximation of the exponential map and the parallelsim, we have the following estimations
$$
\int \Psi(\rho^*-\rho_{k})dx = \int \Psi\mcG(\rho_k)^{-1} T_kdx+O(\|\Psi\|_{\rho_k}\|T_k\|_{\rho_k}^2),
$$
$$
\begin{aligned}
&\int \Psi(\rho_{k+1}-\rho_k)dx = \int \Psi\mcG(\rho_k)^{-1} \Phi_kdx+O(\|\Psi\|_{\rho_k}\|\Phi_k\|_{\rho_k}^2)\\
=&\int \Psi\mcG(\rho_k)^{-1} \Phi_kdx+O(\|\Psi\|_{\rho_k}\|T_k\|_{\rho_k}^2),
\end{aligned}
$$
and
$$
\begin{aligned}
&\int \Psi(\rho^*-\rho_{k+1})dx = \int \Psi\mcG(\rho_{k+1})^{-1} T_{k+1}dx+O(\|\Psi\|_{\rho_{k+1}}\|T_{k+1}\|_{\rho_{k+1}}^2)\\
=&\int \tau^{-1}\Psi\mcG(\rho_{k})^{-1} \tau^{-1} T_{k+1}dx+O(\|\Psi\|_{\rho_{k}}\|T_{k+1}\|_{\rho_{k+1}}^2+\|\Psi-\tau^{-1}\Psi\|_{\rho_{k}}\|T_{k+1}\|_{\rho_{k+1}}^2)\\
=&\int \tau^{-1}\Psi\mcG(\rho_{k})^{-1} \tau^{-1} T_{k+1}dx+O(\|\Psi\|_{\rho_{k}}\|T_{k+1}\|_{\rho_{k+1}}^2+\|\Psi\|_{\rho_{k}}\|\Phi_k\|_{\rho_k}\|T_{k+1}\|_{\rho_{k+1}}^2)\\
=&\int \Psi\mcG(\rho_{k})^{-1} \tau^{-1} T_{k+1}dx+O(\|\Psi\|_{\rho_{k}}\|T_{k+1}\|_{\rho_{k+1}}^2+\|\Psi-\tau^{-1}\Psi\|_{\rho_{k}}\|\tau^{-1}T_{k+1}\|_{\rho_{k}})\\
=&\int \Psi\mcG(\rho_{k})^{-1} \tau^{-1} T_{k+1}dx+O(\|\Psi\|_{\rho_{k}}\|T_{k+1}\|_{\rho_{k+1}}^2+\|\Psi\|_{\rho_{k}}\|\Phi_k\|_{\rho_k}\|T_{k+1}\|_{\rho_{k+1}})\\
=&\int \Psi\mcG(\rho_{k})^{-1} \tau^{-1} T_{k+1}dx+O(\|\Psi\|_{\rho_{k}}\|T_{k}\|_{\rho_{k}}^2).\\
\end{aligned}
$$
Furthermore, we have $R_k = T_k-\tau^{-1}T_{k+1} -\Phi_k$ and 
$$
\int \Psi(\rho^*-\rho_{k})dx-\int \Psi(\rho^*-\rho_{k+1})dx-\int \Psi(\rho_{k+1}-\rho_k)dx=0.
$$
This completes the proof.

\subsection{Proof of Theorem \ref{thm:conv_e}}
We first notice that 
\begin{equation}\label{e_grad}
\nabla E(\rho)(\Phi)=\int \Phi \mcG(\rho)^{-1} \frac{\delta E}{\delta \rho}dx,\quad \nabla^2E(\rho)(\Phi_1,\Phi_1) = \int \Phi \mcH_E(\rho) \Phi dx.
\end{equation}
By taking \eqref{e_grad} into Lemma \ref{lem:est} and utilizing \eqref{ass:A3}, we note that for $\sigma\in T_{\rho_k}^* \mcP(\Omega)$,
\begin{equation}\label{gsimga}
\int g_k \sigma dx =-\int \mcH_E(\rho_k) T_k\sigma dx+\mcO(\|\sigma\|_{\rho_k}\|T_k\|_{\rho_k}^2).
\end{equation}
Based on the Taylor expansion on the Riemannian manifold with \eqref{ass:A3}, it follows
$$
\begin{aligned}
E(\rho_{k+1}) = &E(\rho_k)+\alpha_k\int  \Phi_k \mcG(\rho_k)^{-1}\frac{\delta E}{\delta {\rho_k}}  dx+\frac{\alpha_k^2}{2}\int  \Phi_k \mcH_E(\rho_k) \Phi_k dx+\mcO(\| \Phi_k\|_{\rho_k}^3).
\end{aligned}
$$
Following \eqref{equ:hat_phik} and \eqref{ass:A5}, this yields
\begin{equation}\label{equ:rhokk}
\begin{aligned}
&E(\rho_{k+1}) - E(\rho_k) \\
=& -\alpha_k \int g_k \mcH_{E,P}g_kdx+\frac{\alpha_k^2}{2} \int g_k\mcH_{E,P} \mcH_E(\rho_k)\mcH_{E,P}g_kdx +\mcO(\| \Phi_k\|_{\rho_k}^3)\\
=&\frac{\alpha_k^2-2\alpha_k }{2}\int g_k \mcH_{E,P}g_kdx+\frac{\alpha_k^2}{2} \int g_k(\mcH_{E,P} \mcH_E(\rho_k)\mcH_{E,P}-\mcH_{E,P})g_kdx +\mcO(\| \Phi_k\|_{\rho_k}^3)\\
\leq&\frac{\alpha_k^2-2\alpha_k }{2}\int g_k \mcH_{E,P}g_kdx+\frac{\epsilon_2\alpha_k^2}{2} \int g_k\mcH_{E,P}(\rho_k) g_k dx+\mcO(\| \Phi_k\|_{\rho_k}^3).
\end{aligned}
\end{equation}
Similarly, by the Taylor expansion along with \eqref{ass:A3}, we have
\begin{equation}\label{equ:rhoks}
\begin{aligned}
&E(\rho^*)  -E(\rho_k)\\
= &\int g_k T_k dx+\frac{1}{2}\int T_k\mcH_E(\rho_k)T_kdx+\mcO(\|T_k\|_{\rho_k}^3)\\
=&-\frac{1}{2}\int  T_k \mcH_E(\rho_k) T_k dx+\mcO(\|T_k\|_{\rho_k}^3).
\end{aligned}
\end{equation}
According to \eqref{ass:A1}, \eqref{ass:A2} and Cauchy-Schwartz inequality, we have
$$
\begin{aligned}
\|\mcH_E(\rho_k)^{-1} g_k\|_{\rho_k}^2 =& \int \mcH_E(\rho_k)^{-1} g_k\mcG(\rho_k)^{-1}\mcH_E(\rho_k)^{-1} g_k dx \\
\leq &\delta_1^{-1}\int\mcH_E(\rho_k)^{-1} g_k\mcH_E(\rho_k)\mcH_E(\rho_k)^{-1} g_k dx\\
=&\delta_1^{-1}\int g_k\mcH_E(\rho_k)^{-1} \mcG(\rho_k)^{-1} \mcG(\rho_k) g_k dx\\
\leq &\delta_1^{-1} \|\mcH_E(\rho_k)^{-1} g_k\|_{\rho_k} \|\mcG(\rho_k) g_k\|_{\rho_k}.
\end{aligned}
$$
Besides, from the proof of Lemma \ref{lem:est1}, we have
$$
\|\mcG(\rho_k) g_k\|_{\rho_k} = \norm{\frac{\delta E}{\delta \rho_k}}_{\rho_k}=O(\|T_k\|_{\rho_k}). 
$$
This tells $\|\mcH_E(\rho_k)^{-1} g_k\|_{\rho_k}= O(\|\mcG(\rho_k) g_k\|_{\rho_k})=O(\|T_k\|_{\rho_k})$. Hence, by utilizing \eqref{gsimga} two times, we have
$$
\begin{aligned}
&\int g_k \mcH_E(\rho_k)^{-1}g_k dx \\
=&-\int \mcH_E(\rho_k)^{-1} T_k\mcH_E(\rho_k) g_k dx+\mcO(\|T_k\|_{\rho_k}^2\|\mcH_E(\rho_k)^{-1} g_k\|_{\rho_k})\\
=&-\int  T_k g_k dx+\mcO(\|T_k\|_{\rho_k}^3)\\
=&\int T_k \mcH_E(\rho_k) T_k dx +\mcO(\|T_k\|_{\rho_k}^3).
\end{aligned}
$$
This indicates 
\begin{equation}\label{equ:rhoks2}
\begin{aligned}
E(\rho^*)  -E(\rho_k)=&-\frac{1}{2}\int  T_k \mcH_E(\rho_k) T_k dx+\mcO(\|T_k\|_{\rho_k}^3)\\
=&-\frac{1}{2 }\int g_k \mcH_E(\rho_k)^{-1}g_k dx+\mcO(\|T_k\|_{\rho_k}^3).
\end{aligned}
\end{equation}
Following \eqref{ass:A6}, we note that
$$
\| \Phi_k \|_{\rho_k}=\norm{\mcH_{E,P} \mcG(\rho_k)^{-1}\frac{\delta E}{\delta \rho_k}}_{\rho_k} = \mcO\pp{\norm{\frac{\delta E}{\delta \rho_k}}_{\rho_k}}=\mcO(\|T_k\|_{\rho_k}).
$$
In summary, combining \eqref{ass:A4}, \eqref{equ:rhokk} and \eqref{equ:rhoks2}, we have
$$
\begin{aligned}
&E(\rho_{k+1}) - E(\rho^*) \\
\leq&E(\rho_k)-E(\rho^*)+\frac{\alpha_k^2-2\alpha_k }{2}\int g_k \mcH_{E,P}g_kdx\\
&+\frac{\epsilon_1\alpha_k^2}{2} \int g_k\mcH_{E,P}(\rho_k) g_k dx+\mcO(\| \Phi_k\|_{\rho_k}^3)\\
\leq& \frac{1}{2 }\int g_k \mcH_E(\rho_k)^{-1}g_k dx+\frac{\alpha_k^2-2\alpha_k }{2}\int g_k \mcH_E(\rho_k) g_kdx\\
&+\frac{|\alpha_k^2-2\alpha_k| \epsilon_1}{2} \int g_k \mcH_E(\rho_k)^{-1}g_k dx +\frac{\epsilon_2(1+\epsilon_1)\alpha_k^2}{2} \int g_k\mcH_{E}(\rho_k)^{-1} g_k dx+\mcO(\|T_k\|_{\rho_k}^3)\\
=&\pp{\frac{(\alpha_k-1)^2}{2}+\frac{|\alpha_k^2-2\alpha_k| \epsilon_1}{2}+\frac{\epsilon_2(1+\epsilon_1)\alpha_k^2}{2} } \int g_k \mcH_E(\rho_k)^{-1}g_k dx +\mcO(\|T_k\|_{\rho_k}^3).\\
\end{aligned}
$$
By taking $\alpha_k=1$ and utilizing \eqref{equ:rhoks2}, we have
$$
\begin{aligned}
E(\rho_{k+1}) - E(\rho^*)\leq& \frac{\epsilon_1+\epsilon_2+\epsilon_1\epsilon_2}{2} \int g_k \mcH_E(\rho_k)^{-1}g_k dx+\mcO(\|T_k\|_{\rho_k}^3)\\
=&(\epsilon_1+\epsilon_2+\epsilon_1\epsilon_2)(E(\rho_{k}) - E(\rho^*))+\mcO((E(\rho_{k}) - E(\rho^*))^{3/2}).
\end{aligned}
$$
The last equality comes from $\|T_k\|_{\rho_k}^2=O\pp{\int T_k \mcH_E(\rho_k) T_k dx}=O(E(\rho_{k}) - E(\rho^*))$. 

\subsection{Proof of Theorem \ref{thm:conv_s}}
For simplicity, denote $p_k = \hat \Phi_k-\Phi_k$. From the previous derivation, with $\alpha_k=1$, we note that
$$
\begin{aligned}
&E(\rho_{k+1}) - E(\rho_k) \\
=& - \int g_k (\mcH_{E,P}g_k+p_k)dx+\frac{1}{2} \int (p_k+\mcH_{E,P} g_k)\mcH_E(\rho_k)(\mcH_{E,P}g_k+p_k)dx +\mcO(\|\hat \Phi_k\|_{\rho_k}^3)\\
=&\frac{1 }{2}\int g_k \mcH_{E,P}g_kdx+\frac{1}{2} \int g_k(\mcH_{E,P} \mcH_E(\rho_k)\mcH_{E,P}-\mcH_{E,P})g_kdx \\
&- \int \pp{g_k p_k -\frac{1}{2} p_k(\mcH_E(\rho_k)\mcH_{E,P}+\mcH_{E,P}\mcH_E(\rho_k))g_k  }dx\\
&+\frac{1}{2}\int p_k \mcH_E(\rho_k) p_k dx +\mcO(\|\hat \Phi_k\|_{\rho_k}^3)\\
\leq&\frac{1}{2}\int g_k \mcH_{E,P}g_kdx+\frac{\epsilon_2}{2} \int g_k\mcH_{E,P}(\rho_k) g_k dx+\frac{\epsilon_3+\epsilon_4}{2}\int g_k\mcH_E^{-1}(\rho_k) g_k dx+\mcO(\|\hat \Phi_k\|_{\rho_k}^3).
\end{aligned}
$$
The last inequality further utilizes \eqref{ass:A7} and \eqref{ass:A8}. We also note that
$$
\|\hat \Phi_k\|_{\rho_k}\leq \|p_k\|_{\rho_k} +\|\Phi_k\|_{\rho_k}.
$$
And we have
$$
\begin{aligned}
\|p_k\|_{\rho_k}^2 = &\int p_k \mcG(\rho_k)^{-1} p_k dx\leq \frac{1}{\delta_1} \int p_k \mcH_E(\rho_k) p_k dx \\
\leq& \frac{\epsilon_4}{\delta_1} \int g_k \mcH_E(\rho_k)^{-1}g_k dx=\mcO(\|T_k\|_{\rho_k}^2).
\end{aligned}
$$
Hence, $\|\hat \Phi_k\|_{\rho_k}=\mcO\mcO(\|T_k\|_{\rho_k})$. As a result, by utilizing \eqref{equ:rhoks2}, we complete the proof. 

\subsection{Justification of Assumption \ref{asmp:3}}
To justify Assumption \ref{asmp:3}, we first introduce some definitions. 

For an energy function $E(\rho)$, we call it \textit{well-defined w.r.t. samples} if $E(\hat \rho)$ is well-defined for $\hat \rho = \frac{1}{N}\sum_{i=1}^N\delta(x-x_i)$, where $\delta$ is the Dirac-delta distribution. We denote 
$$
\hat \mcP(\Omega)=\mcP(\Omega)\cup\left\{\hat \rho= \frac{1}{N}\sum_{i=1}^N\delta(x-x_i)|x_i\sim \rho, \rho \in \mcP(\Omega)\right\}.
$$ 
\begin{remark}
Typical examples of such energy functions include
$$
E(\rho)=\int f(x)\rho(x)dx,
$$
where $f(x)$ is a smooth function. Or  
$$
E(\rho)=\int f(x;\rho) \rho(x)dx.
$$
Here $f(x;\rho)$ is well-defined w.r.t. samples for fixed $x$. For instance, $f(x;\rho)=\int w(x,y)\rho(y)dy$ for some smooth function $w(x,y)$. 
\end{remark}

We say that $\{\hat \rho_n\}\subseteq \hat \mcP(\Omega) $ weakly converges to $\rho\in \mcP(\Omega)$ if for any smooth (test) function $f$,
$$
\lim_{N\to\infty }\int f(x)\hat \rho_N(x) dx = \int f(x) \rho(x) dx.
$$
We say that $E(\rho)$ is \textit{convergent w.r.t. samples} if $E(\rho)$ is well-defined w.r.t. samples and
$$
\lim_{n\to\infty }E(\hat \rho_n) = E(\rho),
$$
for any $\{\hat \rho_n\}\subseteq \hat \mcP(\Omega) $ weakly converges to $\rho\in \mcP(\Omega)$. 

For $\rho\in \hat \mcP(\Omega)$, we define the variational problem
$$
J(\rho,\Phi) = \int \Phi \mcH_E(\rho) \Phi dx+2\int \Phi\mcG(\rho)^{-1}\frac{\delta E}{\delta \rho} dx+\lambda\int \Phi \mcR_\mcS \Phi dx.
$$
Suppose that $\|\Phi\|_{\mcS}$ is a norm in $\mcS$, which is independent of $\rho$. We further assumes that $\|\Phi\|_{\mcS}$ and the regularization term $\int \Phi \mcR_\mcS \Phi dx$ satisfy Assumption \ref{asmp:4}. 
\begin{assumption}\label{asmp:4}
There exists $\delta_5,\delta_6>0$ such that for all $\mcD(\rho,\rho^*)<\zeta$, 
\begin{equation}\label{ass:A9}
\delta_6 \|\Phi\|_\mcS^2\leq\|\Phi\|_{\rho}^2\leq \delta_5 \|\Phi\|_\mcS^2.\tag{A9}
\end{equation}

There exists $\delta_7\geq 0$, such that 
\begin{equation}\label{ass:A10}
\int \Phi \mcR_\mcS \Phi dx\leq \delta_7\|\Phi\|_\mcS^2.\tag{A10}
\end{equation}
\end{assumption}

Suppose that for fixed $\Phi\in \mcS$, $J(\rho, \Phi)$ is convergent w.r.t. samples. Then, for fixed $\rho\in \hat \mcP(\Omega)$, $J(\rho,\Phi)$ is well-defined and we denote $\Phi(\rho)$ as the minimizer of $\min_{\Phi\in \mcS}J(\rho,\Phi)$. Then, $\Phi(\rho)$ is well-defined w.r.t. samples. We then show that $\Phi(\rho)$ is convergent w.r.t. samples. 

For $\rho\in \hat \mcP(\Omega)$ satisfying \eqref{ass:A1}, we note that
$$
J(\rho,\Phi)\geq \delta_1 \|\Phi\|_{\rho}^2+2\int \Phi\mcG(\rho)^{-1}\frac{\delta E}{\delta \rho} dx+\lambda\int \Phi \mcR_\mcS \Phi dx.
$$
As a result, for fixed $\rho$, $J(\rho,\Phi)$ is $\delta_1$-strictly convex in $\Phi$ w.r.t. the norm $\|\cdot\|_{\rho}$, i.e.,
\begin{equation}\label{j_cvx}
J(\rho,\Phi_1)-J(\rho,\Phi_2)\geq \int (\Phi_1-\Phi_2)\left.\frac{\delta J(\rho,\Phi)}{\delta \Phi}\right|_{\Phi=\Phi_2} dx+\delta_1\|\Phi_1-\Phi_2\|_{\rho}^2.
\end{equation}

Similarly, for $\rho\in \hat \mcP(\Omega)$ satisfying \eqref{ass:A2}, we note that
$$
J(\rho,\Phi)\leq \delta_2 \|\Phi\|_{\rho}^2+2\int \Phi\mcG(\rho)^{-1}\frac{\delta E}{\delta \rho} dx+\lambda\delta_7\|\Phi\|_\mcS^2.
$$
Hence, this yields
\begin{equation}\label{j_cvx_l}
J(\rho,\Phi_1)-J(\rho,\Phi_2)\leq \int (\Phi_1-\Phi_2)\left.\frac{\delta J(\rho,\Phi)}{\delta \Phi}\right|_{\Phi=\Phi_2} dx+\delta_2\|\Phi_1-\Phi_2\|_{\rho}^2+\lambda\delta_7\|\Phi_1-\Phi_2\|_{\mcS}^2.
\end{equation}

\begin{lemma}\label{lem:j_opt}
Suppose that $\mcS\subseteq \mcF(\Omega)/\mbR$ is a Hilbert space. $J(\Phi)$ is strictly convex in $\Phi$ w.r.t. some norm. For a variational problem $\min_{\Phi\in \mcS}J(\Phi)$, the unique minimizer $\Phi^*$ satisfies
$$
\int  (\Psi-\Phi^*) \left.\frac{\delta J}{\delta \Phi}\right|_{\Phi=\Phi^*} dx = 0,\quad \forall \Psi\in \mcS.
$$
\end{lemma}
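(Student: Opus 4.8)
The plan is to recognize this as the first-order optimality (Euler--Lagrange) condition for a convex functional minimized over a \emph{linear} subspace, and to derive it from elementary scalar calculus along line segments inside $\mcS$. First I would note that a minimizer $\Phi^*$ of $J$ over $\mcS$ exists and is unique: uniqueness is immediate from the assumed strict convexity; for existence one invokes the direct method, using that the functionals $J$ of the form \eqref{var_s} are coercive on $\mcS$ under the standing assumptions (e.g.\ \eqref{ass:A1} bounds the quadratic part below by $\delta_1\|\cdot\|_\rho^2$) and are weakly lower semicontinuous. For the purposes of this lemma one may alternatively simply \emph{postulate} that $\Phi^*$ exists, since the statement only characterizes it.

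For the core argument, fix an arbitrary $\Psi\in\mcS$. Because $\mcS$ is a linear space, the segment $\Phi_t := \Phi^* + t(\Psi-\Phi^*)$ lies in $\mcS$ for every $t\in[0,1]$. Set $h(t):=J(\Phi_t)$. Since $\Phi^*$ minimizes $J$ on $\mcS$, the scalar function $h$ attains its minimum over $[0,1]$ at $t=0$, hence $h'(0^+)\ge 0$. By the chain rule for the first variation one has $h'(0)=\int (\Psi-\Phi^*)\,\frac{\delta J}{\delta\Phi}\big|_{\Phi=\Phi^*}\,dx$, so this integral is nonnegative for every $\Psi\in\mcS$.

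Finally I would upgrade the inequality to an equality by exploiting linearity of $\mcS$ once more: given $\Psi\in\mcS$, the element $\Psi':=2\Phi^*-\Psi$ also lies in $\mcS$ and satisfies $\Psi'-\Phi^*=-(\Psi-\Phi^*)$, so applying the previous step to both $\Psi$ and $\Psi'$ gives both $\int (\Psi-\Phi^*)\,\frac{\delta J}{\delta\Phi}\big|_{\Phi=\Phi^*}\,dx\ge 0$ and $\le 0$. Hence the integral vanishes for all $\Psi\in\mcS$, which is the assertion.

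The only genuinely delicate point --- the \emph{hard part} --- is the differentiation step $h'(0)=\int (\Psi-\Phi^*)\,\frac{\delta J}{\delta\Phi}\big|_{\Phi=\Phi^*}\,dx$, i.e.\ that the Gâteaux derivative of $J$ at $\Phi^*$ in direction $\Psi-\Phi^*$ is represented by the $L^2$ first variation $\frac{\delta J}{\delta\Phi}$. For the quadratic functionals in question this is elementary --- one differentiates under the integral in a finite polynomial expression, and $\frac{\delta J}{\delta\Phi}$ was already computed --- so I would state it explicitly and treat the rest as routine. A minor bookkeeping remark is that since $\mcS\subseteq\mcF(\Omega)/\mbR$, all these pairings are taken against $\frac{\delta J}{\delta\Phi}\in T_\rho\mcP(\Omega)$, whose zero mean makes them well-defined independently of the additive-constant ambiguity in $\Psi$ and $\Phi^*$.
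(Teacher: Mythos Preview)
Your argument is correct and complete. It differs from the paper's own proof, which instead rewrites $\min_{\Phi\in\mcS}J(\Phi)$ as the equality-constrained problem $\min_{\Phi\in\mcF(\Omega)/\mbR,\ \Psi\in\mcS}J(\Phi)$ subject to $\Phi=\Psi$, forms the Lagrangian $\mcL(\Phi,\Psi,\lambda)=J(\Phi)+\int\lambda(\Phi-\Psi)\,dx$, and reads off from the KKT conditions that $\frac{\delta J}{\delta\Phi}\big|_{\Phi=\Phi^*}=-\lambda$ with $\int\lambda\,\tilde\Psi\,dx=0$ for all $\tilde\Psi\in\mcS$; since $\Psi-\Phi^*\in\mcS$, the claim follows. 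Your one-dimensional reduction $h(t)=J(\Phi^*+t(\Psi-\Phi^*))$ together with the reflection $\Psi\mapsto 2\Phi^*-\Psi$ is more elementary and entirely self-contained: it avoids appealing to KKT theory in an infinite-dimensional setting and makes the differentiability hypothesis explicit (you correctly flag the G\^ateaux-differentiation step as the only nontrivial point, which is indeed harmless for the quadratic $J$ at hand). The paper's Lagrangian route, on the other hand, yields the slightly cleaner intermediate statement $\int\frac{\delta J}{\delta\Phi}\big|_{\Phi=\Phi^*}\tilde\Psi\,dx=0$ for every $\tilde\Psi\in\mcS$ directly, without the two-sided inequality step; but since $\mcS$ is linear and $\Phi^*\in\mcS$, this is equivalent to what you prove.
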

\begin{proof}
The variational problem $\min_{\Phi\in \mcS}J(\Phi)$ is equivalent to
$$
\min_{\Phi\in \mcF(\Omega)/\mbR, \Psi\in \mcS} J(\Phi),\quad\text{s.t.}\quad \Phi=\Psi.
$$
Consider the Lagrangian $\mcL(\Phi,\Psi,\lambda)=J(\Phi)+\int \lambda (\Phi-\Psi)dx$. The KKT conditions include:
$$
\frac{\delta J}{\delta \Phi}+\lambda=0,\quad \Psi=\Phi,\quad \int \lambda \tilde \Psi dx=0,\quad\forall \tilde \Psi\in \mcS.
$$
Here the equality holds up to a spatial-shift. As a result, for the minimizer $\Phi^*$, we have
$$
\int \left.\frac{\delta J}{\delta \Phi}\right|_{\Phi=\Phi^*} \tilde \Psi dx=0,\quad \forall \tilde \Psi\in \mcS. 
$$
Because $\Psi-\Phi^*\in \mcS$, this completes the proof. 
\end{proof}

\begin{proposition}
$\mcS$ is a Hilbert space. 
Suppose that \eqref{ass:A1} and \eqref{ass:A2} in Assumption \ref{asmp:1} further holds for $\rho\in \hat \mcP(\Omega)$. We assume the following statements hold.
\begin{itemize}
\item For $\rho \in \hat \mcP(\Omega)$, $\|\Phi(\rho)\|_{\mcS}$ is bounded. 
\item For fixed $\Phi\in \mcS$, $J(\rho; \Phi)$ is convergent w.r.t. samples.
\item For fixed $\Phi\in \mcS$, $\|\Phi\|_{\rho}^2$ is well-defined w.r.t. samples.
\end{itemize}
Then, under Assumption \ref{asmp:4}, $\Phi(\rho)$ is convergent w.r.t. samples. 
\end{proposition}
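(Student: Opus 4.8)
The plan is to read this as a stability statement (essentially a $\Gamma$-convergence argument) for the family of strictly convex variational problems $\min_{\Phi\in\mcS}J(\hat\rho_n,\Phi)$, and to pass to the limit through the Euler--Lagrange characterization of Lemma \ref{lem:j_opt}, closing the argument with the $\delta_1$-strong convexity coming from \eqref{ass:A1} and the norm equivalence \eqref{ass:A9}. Throughout, let $\{\hat\rho_n\}\subseteq\hat\mcP(\Omega)$ converge weakly to $\rho\in\mcP(\Omega)$, write $\Phi_n=\Phi(\hat\rho_n)$ and $\Phi_\infty=\Phi(\rho)$, and recall that under Assumption \ref{asmp:4} the map $\Phi\mapsto\int\Phi\mcR_\mcS\Phi\,dx$ is a nonnegative quadratic form bounded by $\delta_7\|\Phi\|_\mcS^2$, hence weakly lower semicontinuous on the Hilbert space $\mcS$, and $\|\cdot\|_\rho$ and $\|\cdot\|_\mcS$ are equivalent uniformly in $\mcD(\rho,\rho^*)<\zeta$.

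First I would extract a limit. By the standing hypothesis $\sup_n\|\Phi_n\|_\mcS<\infty$, the sequence $\{\Phi_n\}$ lies in a bounded, hence weakly sequentially precompact, subset of $\mcS$; so every subsequence has a further subsequence converging weakly in $\mcS$ to some $\Phi^\dagger$. To identify $\Phi^\dagger$, I would use that $J(\hat\rho_n,\cdot)$ is quadratic, so $\frac{\delta J(\hat\rho_n,\Phi)}{\delta\Phi}=2\mcH_E(\hat\rho_n)\Phi+2\mcG(\hat\rho_n)^{-1}\frac{\delta E}{\delta\hat\rho_n}+2\lambda\mcR_\mcS\Phi$, and Lemma \ref{lem:j_opt} gives, for every $\Psi\in\mcS$,
\begin{equation*}
\int\Psi\,\mcH_E(\hat\rho_n)\Phi_n\,dx+\int\Psi\,\mcG(\hat\rho_n)^{-1}\frac{\delta E}{\delta\hat\rho_n}\,dx+\lambda\int\Psi\,\mcR_\mcS\Phi_n\,dx=0 .
\end{equation*}
Using self-adjointness of $\mcH_E$ and $\mcR_\mcS$, the first and third terms equal $\langle\Phi_n,\cdot\,\rangle_\mcS$ paired with $\Psi$-dependent elements of $\mcS$; combining weak convergence $\Phi_n\rightharpoonup\Phi^\dagger$ with strong convergence in $\mcS^*$ (for each fixed $\Psi$) of $\Psi\mapsto\int\Psi\,\mcH_E(\hat\rho_n)(\cdot)\,dx$ to $\Psi\mapsto\int\Psi\,\mcH_E(\rho)(\cdot)\,dx$ and of $\mcG(\hat\rho_n)^{-1}\frac{\delta E}{\delta\hat\rho_n}$ to $\mcG(\rho)^{-1}\frac{\delta E}{\delta\rho}$, I would pass to the limit and obtain $\int\Psi\big(\mcH_E(\rho)\Phi^\dagger+\mcG(\rho)^{-1}\frac{\delta E}{\delta\rho}+\lambda\mcR_\mcS\Phi^\dagger\big)dx=0$ for all $\Psi\in\mcS$. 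This is the Euler--Lagrange equation for $\min_{\Phi\in\mcS}J(\rho,\Phi)$, whose solution is unique by strong convexity, so $\Phi^\dagger=\Phi_\infty$; since the weak cluster point is unique, $\Phi_n\rightharpoonup\Phi_\infty$ along the whole sequence.

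Finally I would upgrade weak to strong convergence. Applying \eqref{j_cvx} with $\rho$ replaced by $\hat\rho_n$, $\Phi_1=\Phi_\infty$, $\Phi_2=\Phi_n$, and noting that the Euler--Lagrange term vanishes (Lemma \ref{lem:j_opt}, as $\Phi_\infty-\Phi_n\in\mcS$), gives
\begin{equation*}
\delta_1\|\Phi_\infty-\Phi_n\|_{\hat\rho_n}^2\le J(\hat\rho_n,\Phi_\infty)-J(\hat\rho_n,\Phi_n).
\end{equation*}
Here $J(\hat\rho_n,\Phi_\infty)\to J(\rho,\Phi_\infty)$ since $J(\cdot;\Phi)$ is convergent w.r.t. samples for the fixed test function $\Phi=\Phi_\infty$, while $\liminf_n J(\hat\rho_n,\Phi_n)\ge J(\rho,\Phi_\infty)$ follows from weak lower semicontinuity: the $\mcR_\mcS$-term and the positive-semidefinite form $\Phi\mapsto\int\Phi\mcH_E(\rho)\Phi\,dx$ are weakly lsc on $\mcS$, the linear term $\int\Phi\,\mcG(\rho)^{-1}\frac{\delta E}{\delta\rho}\,dx$ passes to the limit by weak$\times$strong, the $\hat\rho_n$-dependence is removed by the same uniform convergence of the forms as above, and $\Phi_\infty$ minimizes $J(\rho,\cdot)$ over $\mcS$. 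Hence the right-hand side has nonpositive $\limsup$ and the left-hand side is nonnegative, so $\|\Phi_\infty-\Phi_n\|_{\hat\rho_n}\to0$, and then \eqref{ass:A9} yields $\|\Phi_\infty-\Phi_n\|_\mcS\to0$. Since every subsequence has a further subsequence along which this holds with the same limit, the full sequence converges, i.e.\ $\Phi(\rho)$ is convergent w.r.t.\ samples.

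The main obstacle is the ingredient invoked twice above: that the \emph{pointwise-in-$\Phi$} convergence of $J(\cdot;\Phi)$ on samples upgrades to convergence of the Hessian bilinear forms $\int\Psi\mcH_E(\hat\rho_n)(\cdot)\,dx$ and of the gradient functionals $\mcG(\hat\rho_n)^{-1}\frac{\delta E}{\delta\hat\rho_n}$ that is \emph{uniform over $\mcS$-bounded sets} (equivalently, strong-operator / dual-norm convergence), so that products of two merely weakly convergent factors can be handled. In finite-dimensional affine models this is automatic; for an RKHS $\mcS$ with a smooth kernel I expect it to follow from equicontinuity and uniform boundedness of $\{\nabla\Phi,\nabla^2\Phi:\|\Phi\|_\mcS\le1\}$ on compacts, combined with weak convergence $\hat\rho_n\rightharpoonup\rho$ (plus mild tightness or decay to control tails when $\Omega$ is unbounded), but making this quantitative is the technical heart of the argument, and it is exactly what the three bulleted hypotheses together with Assumption \ref{asmp:4} are designed to package.
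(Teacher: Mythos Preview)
Your strategy is sound but takes a different route from the paper. Both proofs pivot on Banach--Alaoglu and the $\delta_1$-strong convexity inequality \eqref{j_cvx}, but the paper never touches the Euler--Lagrange equation and never tries to identify the weak limit with $\Phi(\rho)$. Instead it argues by contradiction: assuming $\|\Phi(\hat\rho_n)-\Phi(\rho)\|_\rho>\epsilon$, it extracts a subsequence $\Phi(\hat\rho_{n_k})\to\Phi^*$ and then writes the single quantity $J(\hat\rho_{n_k},\Phi(\hat\rho_{n_k}))-J(\rho,\Phi(\rho))$ via two telescopes, once through $J(\hat\rho_{n_k},\Phi(\rho))$ and once through $J(\rho,\Phi(\hat\rho_{n_k}))$. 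Strong convexity of $J(\hat\rho_n,\cdot)$ at its minimizer $\Phi(\hat\rho_n)$ together with \eqref{ass:A9} forces the first telescope to be $\le -c\epsilon^2$ for large $k$, while strong convexity of $J(\rho,\cdot)$ at $\Phi(\rho)$ forces the second to be $\ge c'\epsilon^2$; the upper bound \eqref{j_cvx_l} and sample convergence of $J(\cdot,\Phi)$ at the fixed points $\Phi(\rho)$ and $\Phi^*$ dispose of the cross terms. This yields a contradiction.

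What each route buys: the paper stays entirely at the level of energy values, so it never needs to pass a weak limit through the product $\int\Psi\,\mcH_E(\hat\rho_n)\Phi_n\,dx$---the very obstacle you flag. Your approach is more constructive and identifies the limit directly, but the price is exactly the uniform (strong-operator) convergence of the Hessian forms, which the pointwise hypothesis on $J(\cdot,\Phi)$ does not give. That said, note that the paper's argument tacitly treats the Banach--Alaoglu subsequence as \emph{strongly} convergent (the parenthetical ``which is also convergent because $\mcS$ is a Hilbert space'') when it asserts $J(\rho,\Phi(\hat\rho_{n_k}))\to J(\rho,\Phi^*)$ and when it uses \eqref{j_cvx_l} with $\|\Phi(\hat\rho_{n_k})-\Phi^*\|_\mcS\to0$; so a cognate compactness issue is present there too. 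In the finite-dimensional affine model both arguments close without further work; in the RKHS setting your honest diagnosis of the gap is accurate, and your proposed remedy (equicontinuity of $\{\nabla\Phi,\nabla^2\Phi:\|\Phi\|_\mcS\le1\}$ plus tightness) is the natural way to supply the missing uniformity for either approach.
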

\begin{proof}
Suppose that $\Phi(\rho)$ is not convergent w.r.t. samples. Then, there exists $\{\hat \rho_n\}_{n=1}^\infty\subseteq \hat \mcP(\Omega)$ and $\epsilon>0$ such that $\hat \rho_n$ weakly converges to $\rho\in \mcP(\Omega)$, while $\|\Phi(\hat \rho_n)-\Phi(\rho)\|_{\rho}>\epsilon$. We note that
$$
\begin{aligned}
J(\hat \rho_n, \Phi(\hat \rho_n))-J(\rho,\Phi(\rho)) = & J(\hat \rho_n, \Phi(\hat \rho_n))-J(\rho,\Phi(\hat \rho_n))+J(\rho,\Phi(\hat \rho_n))-J(\rho,\Phi(\rho)) \\
=&J(\hat \rho_n, \Phi(\hat \rho_n))-J(\hat \rho_n,\Phi( \rho))+J(\hat \rho_n,\Phi( \rho))-J(\rho,\Phi(\rho)).
\end{aligned}
$$
Because $\Phi(\hat \rho_n)$ is the minimizer of $J(\hat \rho_n, \Phi)$, by applying \eqref{j_cvx} and Lemma \ref{lem:j_opt}, we have
$$
\begin{aligned}
&J(\hat \rho_n, \Phi(\hat \rho_n))-J(\hat \rho_n,\Phi( \rho))\leq -\delta_1 \|\Phi(\hat \rho_n)-\Phi(\rho)\|_{\hat \rho_n}^2\\
&\leq - \delta_1\delta_6\|\Phi(\hat \rho_n)-\Phi(\rho)\|_{\mcS}^2\leq-\frac{\delta_1\delta_6}{\delta_5}\|\Phi(\hat \rho_n)-\Phi(\rho)\|_{\rho}^2\leq-\frac{\delta_1\delta_6\epsilon^2}{\delta_5}.
\end{aligned}
$$
Similarly, because $\Phi( \rho)$ is the minimizer of $J( \rho, \Phi)$, we have
$$
J(\hat \rho_n,\Phi( \rho))-J(\rho,\Phi(\rho))\geq \delta_1\|\Phi(\hat \rho_n)-\Phi(\rho)\|_{\rho}^2\geq \frac{\delta_1\epsilon^2}{2}.
$$
Because $\mcS$ is a Hilbert space and $\{\Phi(\hat \rho_n)\}$ is bounded, according to the Banach-Alaoglu theorem, $\{\Phi(\hat \rho_n)\}$ is weakly sequentially compact. Namely, there exists a weakly convergent subsequent $\{\Phi(\hat \rho_{n_k})\}$ (which is also convergent because $\mcS$ is a Hilbert space). Suppose that this sequence converges to $\Phi^*$.  As a result, 
$$
\lim_{k\to \infty}J(\rho,\Phi(\hat \rho_{n_k}))= J(\rho,\Phi^*).
$$
From \eqref{j_cvx_l} and Assumption \ref{asmp:4}, we have 
$$
\begin{aligned}
J(\hat \rho_{n_k}, \Phi(\hat \rho_{n_k}))-J(\hat \rho_{n_k},\Phi^*)\geq&-\delta_2\|\Phi(\hat \rho_{n_k})-\Phi^*\|_{\hat \rho_{n_k}}^2-\lambda\delta_7 \|\Phi(\hat \rho_{n_k})-\Phi^*\|_{\hat \rho_{n_k}}^2 \\
\geq &-(\delta_2\delta_5+\lambda\delta_7)\|\Phi(\hat \rho_{n_k})-\Phi^*\|_\mcS^2.
\end{aligned}
$$
Hence, we have
$$
\lim_{k\to \infty}J(\hat \rho_{n_k},\Phi(\hat \rho_{n_k}))= J(\rho,\Phi^*).
$$
On the other hand, because $J(\rho, \Phi)$ is convergent w.r.t. samples for fixed $\Phi$, $\lim_{k\to \infty}J(\hat \rho_{n_k},\Phi(\rho))-J(\rho,\Phi(\rho))= 0$. Hence, for sufficiently large $k$, we have
$$
\begin{aligned}
&J(\hat \rho_{n_k}, \Phi(\hat \rho_{n_k}))-J(\rho,\Phi(\rho))\\
= &J(\hat \rho_{n_k}, \Phi(\hat \rho_{n_k}))-J(\rho,\Phi(\hat \rho_{n_k}))+J(\rho,\Phi(\hat \rho_{n_k}))-J(\rho,\Phi(\rho))\leq -\frac{\delta_1\delta_6\epsilon^2}{2\delta_5},
\end{aligned}
$$ 
and 
$$
\begin{aligned}
&J(\hat \rho_{n_k}, \Phi(\hat \rho_{n_k}))-J(\rho,\Phi(\rho))\\
= &J(\hat \rho_{n_k}, \Phi(\hat \rho_{n_k}))-J(\hat \rho_{n_k},\Phi( \rho))+J(\hat \rho_{n_k},\Phi( \rho))-J(\rho,\Phi(\rho))\geq \frac{\delta_1}{2}\epsilon^2.
\end{aligned}
$$
This leads to a contradiction.
\end{proof}

\bibliography{WIG}

\end{document}